\newenvironment{mathframed}{\framed%
\allowdisplaybreaks
\vspace*{-\abovedisplayskip}\noindent}{%
\vspace*{-\dimexpr\baselineskip+\topsep}\endframed}
\numberwithin{equation}{section}
\theoremstyle{plain}
\theoremstyle{plain}
\newtheorem{theorem}{Theorem}[section]
\newtheorem{corollary}[theorem]{Corollary}
\newtheorem{proposition}[theorem]{Proposition}
\newtheorem{lemma}[theorem]{Lemma}
\theoremstyle{definition}
\newtheorem{definition}[theorem]{Definition}
\theoremstyle{remark}
\newtheorem{remark}[theorem]{Remark}
\theoremstyle{definition}
\theoremstyle{remark}
\mathchardef\emptyset="001F
\newcommand{\R}{\mathbb{R}}
\newcommand{\E}{\mathcal{E}}
\newcommand{\dive}{\mathrm{div}}
\newcommand{\spt}{\mathrm{spt}}
\newcommand{\G}{\mathcal{G}}
\newcommand{\di}{\mathrm{d}}
\newcommand{\Om}{\Omega}
\newcommand{\F}{\mathcal{F}}
\newcommand{\J}{\mathcal{J}}
\newcommand{\y}{\boldsymbol{y}}
\newcommand{\de}{\mathrm{d}}
\newcommand{\z}{\boldsymbol{z}}
\newcommand{\bX}{\boldsymbol{X}}
\newcommand{\cP}{\mathcal{P}}
\newcommand{\cW}{\mathcal{W}}
\newcommand{\uu}{\boldsymbol{u}}
\newcommand{\ev}{\mathrm{ev}}
\newcommand{\bmu}{\boldsymbol{\mu}}
\definecolor{dred}{rgb}{.8,0,0}
\definecolor{ddmagenta}{rgb}{0.7,0,0.9}
\definecolor{ddcyan}{rgb}{0,0.2,1.0}
\definecolor{Orchid}{rgb}{0.7,0.4,0}
\definecolor{blue_links}{RGB}{13,0,180} 
\definecolor{lightblue}{RGB}{0.9,0.9,1}
\newcommand{\EEE}{\color{black}}
\newcommand{\res}{\mathop{\hbox{\vrule height 7pt width .5pt depth
0pt\vrule height .5pt width 6pt depth0pt}}\nolimits}
\newcommand{\N}{\mathbb{N}}
\newcommand{\coloneq }{\hspace{1pt}\raisebox{0.74pt}{\scalebox{0.8}{:}}\hspace{-2.2pt}=}
\newcommand{\eeta}{\boldsymbol{\eta}}
\title[Optimal control problems with additive noise]{Optimal control problems in transport dynamics with additive noise}
\author[S. Almi]{Stefano Almi}
\address[Stefano Almi]{Dipartimento di Matematica e Applicazioni ``R.~Caccioppoli'',
Universit\`a di Napoli Federico II, via Cintia, 80126 Napoli, Italy.}
\email{stefano.almi@unina.it}
\author[M. Morandotti]{Marco Morandotti}
\address[Marco Morandotti]{Dipartimento di Scienze Matematiche ``G.~L.~Lagrange'',
Politecnico di Torino, Corso Duca degli Abruzzi 24,
10129 Torino, Italy.}
\email{marco.morandotti@polito.it}
\author[F. Solombrino]{Francesco Solombrino}
\address[Francesco Solombrino]{Dipartimento di Matematica e Applicazioni ``R.~Caccioppoli'',
Universit\`a di Napoli Federico II, via Cintia, 80126 Napoli, Italy.}
\email{francesco.solombrino@unina.it}
\date{\today}
\keywords{Mean-field optimal control, optimal control with SDE constraints, population dynamics, $\Gamma$-convergence, superposition principle}
\begin{document}
\subjclass[2020]{49N80, 
35Q93, 
(
49J45, 
60H10, 
49M41, 
93E20)} 

\begin{abstract}
Motivated by the applications, a class of optimal control problems is investigated, where the goal is to influence the behavior of a given population through another controlled one interacting with the first. Diffusive terms accounting for randomness in the evolution are taken into account. A well-posedness theory under very low regularity of the control vector fields is developed, as well as a rigorous derivation from stochastic particle systems.
\end{abstract}

\maketitle

\tableofcontents

\bigskip

\section{Introduction}
\label{s:intro}

Many evolutionary models in population dynamics are usually formulated in the form of a Fokker--Planck-type equation for the time-dependent population density~$\mu_t$
\begin{equation}\label{e:FP}
\partial_t \mu_t+\dive(v \mu_t)-\sigma\Delta \mu_t=0
\end{equation}
coupling a transport dynamics (encoded by the divergence term and driven by the velocity field~$v_t$) with a diffusion term (encoded by the laplacian).
The probabilistic counterpart of such equation is the stochastic differential equation
\begin{equation}\label{e:SL}
\de X(t)=v(t,X(t))\,\de t+\sqrt{2\sigma}\,\de W(t),
\end{equation}
where~$X$ is a random variable and~$W$ denotes the Brownian motion, which can be interpreted as the Lagrangian formulation of the Eulerian problem~\eqref{e:FP}. 

In many interesting situations, the vector field $v(t,x)\eqqcolon v_{\mu_t}(t,x)$ may itself depend on the population density~$\mu_t$ in a rather general way. A standard situation is, for instance,
\begin{equation}\label{e:vel_field}
v(t, x) = (K *\mu_t)(t, x) + f (t, x),
\end{equation}
where $f(t,x)$ is an external velocity field and $K(t,x)$ is a self-interaction kernel. This gives both a non-linear and non-local character to equation~\eqref{e:FP}.
In many models, velocity terms of the kind~\eqref{e:vel_field} account for elementary attraction and repulsion forces between the members of the population.

If we neglect the diffusion term in~\eqref{e:FP}, the corresponding evolution of~$\mu_t$ obeys a transport-like dynamics driven by a non-local continuity equation. 
For this kind of dynamics, a general class of optimal control problems has been addressed in~\cite{bongini2016optimal}, where the goal is to modify the behavior of the population $\mu_t$  through the interaction with a selected population of leaders, whose density is~$\nu_t$.  
There, two scenarios with increasing complexity were explored:
\begin{itemize}
\item the evolution of~$\nu_t$ is determined by the optimization of a cost functional~$\mathcal{J}(\mu,\nu)$ and~$\mu$ and~$\nu$ are coupled by a velocity field of the form 
\begin{equation}\label{e:ev_v}
v(t, x) = (K *\mu_t)(t, x) + (H *\nu_t)(t, x),
\end{equation}
(see~\cite[Problem 1]{bongini2016optimal});
\item the evolution of~$\nu_t$ obeys itself a non-local continuity equation driven by a velocity field~$w(t,x)+u(t,x)$, where~$w$ has the structure as in~\eqref{e:ev_v} and the control term~$u(t,x)$ optimizes a control cost $\mathcal{J}(\mu,\nu,u)$ (see~\cite[Problem~2]{bongini2016optimal}).
\end{itemize}
For both problems above, general conditions on the control cost~$\mathcal{J}$ and on the class of admissible controls were introduced in order to provide well posedness.
We remark, in particular, that in the context of \cite[Problem 2]{bongini2016optimal} the admissible control~$u(t,\cdot)$ must satisfy a Lipschitz condition with prescribed Lipschitz constant~$L$.

The aim of the present paper is to extend the results of~\cite{bongini2016optimal} in two directions which are, in our opinion, interesting both from the point of view of modeling and of mathematical analysis.

First of all, for the dynamics of the population~$\mu_t$, we consider equation~\eqref{e:FP} with the presence of the diffusive term. This term is actually reminiscent of the agent-based interpretation of equation~\eqref{e:FP}, which can be seen as an effective limit model for a particle dynamics with a very large number of agents.
In this approximation, the inevitable loss in accuracy is taken into account by adding some white noise to the system.
Furthermore, such a term may also express the fact that individuals of the population~$\mu_t$ can exhibit some random behavior, despite being driven by interactions with other agents or with the leaders.
When coming to our analogue to \cite[Problem 2]{bongini2016optimal}, we find instead natural to postulate that the action of the leaders is completely determined by a policy-maker through the control vector field~$u$ without inserting a diffusive term in the dynamics.

The precise formulation of the two control problems we propose is given in Sections~\ref{s:general} and~\ref{s:2pop}, respectively.
In particular, Problem~2 (see \eqref{e:min-mean}) is the generalization of the optimal control problem analyzed in~\cite{ASC_22} for a discrete fixed number~$m$ of leaders (see also \cite{CGS2017,gorlando} for related problems in piracy control and maritime crime prevention).
Instead, in our formulation, this restriction is lifted and an effective macroscopic model also for~$\nu_t$ is considered.

The second novelty of our approach is that in Problem~2 we allow for a large class of admissible controls with very low regularity, namely the vector fields we consider are of the form 
$$u(t,x)=f (t, x) g(\mu_{t}) \,,$$
where the function~$f$ is only of class~$L^\infty$ in both space and time.
The presence of the term~$g(\mu_t)$ is an additional modeling possibility allowing the policy maker to tune the control action on the actual state of the system.
The above class of control vector fields is, in principle, the one considered in \cite{AMOP_2022,ASC_22,CLOS_2022,Flos,gorlando}.
In \cite{ASC_22,gorlando}, however, well posedness of the optimal control problems was considerably simplified by the assumption that the leaders' population remain discrete.
In our setting, we have instead to resort to the superposition principle \cite[Theorem~5.2]{AmbForMorSav18}, \cite[Chapter~8]{AmbGigSav08}, \cite{AmbTre14}, as it will be clear from the proof of Theorem~\ref{p:existence-mean} below. 
This tool has already proven to be crucial in connection with the problems considered in \cite{AMOP_2022,CCDMP2021, CLOS_2022,Flos}, where however no diffusive terms were present in the state equations. In our setting its use has therefore to be combined with some a priori estimates for equation~\eqref{e:FP} which are recovered by looking at its stochastic Lagrangian counterpart~\eqref{e:SL} and employing some fixed point argument.

We stress that our control problem has a different formulation from that of mean-field games, introduced in~\cite{HMC,LL}. 
While, there, the decentralized control rules are embedded inside the dynamics of~$\mu$, in our setting a control mass~$\nu$ interacts with the original population with the aim of influencing its behavior. 
For mean-field games in the context of Fokker--Planck-type equations, we refer the reader to \cite{Camilli_games,CPT_2017,Porretta}.

In the last two sections of the paper, we specifically focus on the rigorous derivation of Problem~2 (see \eqref{e:min-mean}) as the deterministic variational limit of a stochastic optimal control problem associated with a particle dynamics with additive noise.
In doing so, we adapt to our setting $\Gamma$-convergence techniques combined with the derivation of kinetic equations as the mean-field limit of agent-based systems \cite{Kac}.
The latter is a rather effective tool to overcome the \emph{curse of dimensionality} for systems with a very large number of agents. 
Indeed, kinetic approximations of multi-agent systems and mean-field optimal control problems, mostly in the deterministic setting, have been proposed in recent literature in connection with a huge number of possible applications, ranging from models for 
opinion formation \cite{BMPW2009,Toscani2006}, 
wealth distribution \cite{MR4147945,MR2551376,pareschi2014wealth}, 
traffic or pedestrian flows \cite{albi2016invisible,PR_traffic2,PR2018,PTZ2020,TZ2019}, 
herding problems \cite{AMOP_2022,albi2016invisible,Auletta2020HerdingSA,burger2020instantaneous,osti_10386637,Pierson2018ControllingNH,nature2016}, 
consensus-based optimization \cite{Herty-Pareschi,MR3804923,fornasier2022anisotropic,MR4160256}
(see also \cite{BKT21,CLOS_2022,kalise2020sparse,PiccoliRossiARMA1,PiccoliRossiARMA2} for rigorous derivations and further applications and \cite{bonnet2023measure,BF2021b,BF2021a,BR_PMP,BPT21} for optimality conditions).
In the context of multi-agent systems with stochastic noise, but without control, we also refer the reader to \cite{bolley2011stochastic}, while mean-field control problems with diffusion terms have been recently considered in \cite{albi2,carrillo2020mean}.

The particle approximation of problem \eqref{e:min-mean} is introduced in Section~\ref{s:particles}, where we couple a system of~$M$ agents (followers) driven by a stochastic dynamics as in~\eqref{e:SL} with the evolution of~$m$ selected and controlled agents (leaders).
Although the leaders' evolution is formally deterministic, the coupling with the followers' evolution (which is affected by additive noise) gives a stochastic character to the whole system.
A cost functional associated with the system, taking into account its expected behavior, is introduced in~\eqref{e:cost}.
The derivation of the state equation~\eqref{e:mean-field} as the mean-field limit of the particle system~\eqref{e:finite-particles}, as it is usual in the stochastic setting, goes through some propagation-of-chaos estimates, which we develop in Section~\ref{s:chaos}.
In particular, we prove that the initially coupled positions of the agents become independent in the limit as~$M$ becomes larger and larger uniformly with respect to~$m$.
In other words, the limit behavior of the particle system can be described by~$M$ copies of the SDE/ODE system in~\eqref{e:chaos}, where the coupling only takes place through the law of the random variable~$X$.
In the limit as $M,m\to\infty$, we eventually recover the PDE system~\eqref{e:mean-field}.

As a conclusive step, in Section~\ref{s:limit}, we recover the deterministic mean-field optimal control problem~\eqref{e:min-mean} as the $\Gamma$-limit of the stochastic optimal control problem~\eqref{e:min-finite}.
A major difficulty has to be overcome in the $\Gamma$-$\limsup$ inequality. 
We remark, indeed, that the sole integrability of the control field~$f$ is not enough to guarantee the existence of a flow map for system~\eqref{e:finite-particles}.
Hence, the construction of a recovery sequence for problem~\eqref{e:min-mean} has to combine the usual discretization arguments with the use of the superposition principle in order to detect suitable discrete trajectories converging to the mean-field evolution associated with the given control.

While the present paper is devoted to the well-posedness of a class of mean-field optimal control problems with diffusion terms coming from stochastic noise, further interesting steps concerning the numerical approximation of solutions through discrete-in-time schemes (in the spirit of \cite{AlmMorSol21}), as well as the derivation of first-order optimality conditions, will be the subject of future research.

\section{Preliminaries and notation}
\label{s:preliminaries}

For $d \in \mathbb{N}$ and~$T>0$ we denote by~$\mathcal{M}_{b} ([0, T]\times \R^{d}; \R^{d})$ the space of vector-valued bounded Radon measures on~$[0, T] \times \R^{d}$. 
For a metric space~$(E,d_E)$, the symbol~$\mathcal{P}(E)$ stands for the set of probability measures on~$E$. 
For $p \in [1, +\infty)$, we denote by~$\mathcal{P}_{p} (E)$ the set of probability measures~$\mu$ on~$E$ with finite~$p$-moment
$$m_{p} (\mu)\coloneqq\int_E d_E^p(x,x_0)\,\de\mu(x),$$
where $x_0\in E$ is a given point.
We further denote by~$\cW_{p}$ the $p$-Wasserstein distance on~$\mathcal{P}_{p} (E)$. 
Given $f\colon E\to F$ a measurable function and $\mu\in\cP(E)$, the push forward of $\mu$ through $f$ is the probability measure in $\cP(F)$ defined by $(f_{\#}\mu)(B)\coloneqq \mu(f^{-1}(B))$, for every measurable subset $B\subseteq F$.
If~$f$ is, additionally, a Lipschitz function, then the following inequality holds true:
\begin{equation}\label{e:pfLip}
\cW_1(f_{\#}\mu,f_{\#}\nu)\leq {\rm Lip}(f)\cW_1(\mu,\nu),
\end{equation}
for every $\mu,\nu\in\cP(E)$, where ${\rm Lip}(f)>0$ is the Lipschitz constant of~$f$.

Along the paper we shall suppose, without loss of generality, that all the involved random variables are supported on a fixed filtered probability space $(\Om, \F, \F_{t}, \mathbb{P})$. We denote by~$\mathbb{E}$ the expectation operator
and we use the symbol~$\mathcal{M} (\Om;E)$ to indicate the space of $E$-valued random variables. For $X \in \mathcal{M} (\Om; E)$, we set ${\rm Law} (X) \coloneqq (X)_{\#}\mathbb{P}\in\cP(E)$ the push forward of~$\mathbb{P}$ through~$X$. We will denote by~$W$ an additive white noise.

For every $t \in [0, T]$ we denote by~$\ev_{t} \colon C([0, T]; \R^{d}) \to \R^{d}$ the evaluation map at time~$t$, defined as $\ev_{t} (\gamma) \coloneqq \gamma(t)$ for every $\gamma \in C([0, T]; \R^{d})$. 

We recall that for every $\bmu\in \cP(C([0,T];\R^{d}))$, setting $\mu_t\coloneqq (\ev_t)_{\#}\bmu$, it holds that $m_{p} (\mu_{t}) \leq m_{p} (\bmu)$ for every~$t \in [0, T]$. 
The curve $t\mapsto\mu_t$ will be often denoted by~$\mu$ alone. The same symbol will be used for the (positive) measure $\mu_t\otimes\de t\in\mathcal{M}_b([0,T]\times\R^d)$.

\begin{theorem}[\cite{Fou-Gui}]
\label{t:iid}
Let $p>1$, $\mu \in \mathcal{P}_{p} (\R^{d})$, and $X_{i}$ be a sequence of i.i.d. random variable with distribution~$\mu$. For $M \in \mathbb{M}$, let $\mu^{M} \coloneqq \frac{1}{M} \sum_{i=1}^{M} \delta_{X_{i}}$. Then, there exists a constant $C = C(d, p)>0$ such that for every $M$
\begin{displaymath}
\mathbb{E} \big( \cW_{1} (\mu, \mu^{M}) \big) \leq C m_{p} (\mu) 
\left\{
\begin{array}{lll}
M^{-\frac{1}{2}} + M^{-\frac{p-1}{p}} & \text{if $d=1$ and $p\neq 2$},\\[1mm]
M^{-\frac{1}{2}} \log (1 + M) + M^{-\frac{p-1}{p}} & \text{if $d = 2$ and $p\neq 2$},\\[1mm]
M^{-\frac{1}{d}} + M^{-\frac{p-1}{p}} & \text{if $d \geq 3$ and $p\neq \frac{d}{d-1}$}.
\end{array}
\right.
\end{displaymath} 
\end{theorem}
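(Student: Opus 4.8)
The plan is to bound $\cW_1$ from above by a multiscale (dyadic) functional of the two measures, to feed in the only probabilistic information available — that $M\mu^M(F)$ has a $\mathrm{Binomial}(M,\mu(F))$ distribution for each fixed Borel set $F$ — and then to optimise a purely deterministic sum over the relevant scales. The non-compactness of $\supp\mu$ will be handled by combining the dyadic decomposition with a decomposition of $\R^d$ into concentric annuli, the hypothesis $\mu\in\mathcal{P}_p(\R^d)$ entering only through Markov's inequality applied to the annular masses.

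First I would record a deterministic estimate: if $Q$ is a cube of side $R$, $\rho,\rho'\in\mathcal{P}(Q)$, and $\mathcal{P}_n$ denotes the partition of $Q$ into $2^{nd}$ dyadic subcubes of side $R\,2^{-n}$, then $\cW_1(\rho,\rho')\le C_d\,R\sum_{n\ge1}2^{-n}\sum_{F\in\mathcal{P}_n}|\rho(F)-\rho'(F)|$. This follows from Kantorovich--Rubinstein duality: given a $1$-Lipschitz $\varphi$ on $Q$, let $\pi_n\colon Q\to Q$ map a point to the centre of the cell of $\mathcal{P}_n$ containing it, so that $\|\mathrm{id}-\pi_n\|_\infty\le\tfrac{\sqrt d}{2}R\,2^{-n}$; writing the uniformly convergent telescoping $\varphi=\varphi\circ\pi_0+\sum_{n\ge1}(\varphi\circ\pi_n-\varphi\circ\pi_{n-1})$, using that $\varphi\circ\pi_0$ is constant hence annihilated by $\rho-\rho'$, and that $\varphi\circ\pi_n-\varphi\circ\pi_{n-1}$ is constant on each cell of $\mathcal{P}_n$ with modulus at most $\tfrac{3\sqrt d}{2}R\,2^{-n}$, one concludes by summing. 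Since $\mu$ need not be compactly supported, I would apply this annulus by annulus: with $\mathcal{C}_0=\{|x|<1\}$ and $\mathcal{C}_\ell=\{2^{\ell-1}\le|x|<2^\ell\}$ for $\ell\ge1$, use the estimate inside each $\mathcal{C}_\ell$ at relative resolution $2^{-n}$ (cells of side $\sim 2^{\ell-n}$) and glue the couplings, paying for transporting the mass discrepancy between annuli a coarse-scale cost $\sum_{\ell\ge0}2^\ell|\mu(\mathcal{C}_\ell)-\mu^M(\mathcal{C}_\ell)|$.

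The probabilistic input is elementary: since the $X_i$ are i.i.d.\ with law $\mu$, for every Borel $F$ one has $\mathbb{E}\,\mu^M(F)=\mu(F)$ and $\Var(\mu^M(F))=\mu(F)(1-\mu(F))/M$, hence $\mathbb{E}|\mu(F)-\mu^M(F)|\le\min\{\,2\mu(F),\ M^{-1/2}\mu(F)^{1/2}\,\}$. Summing over the $\lesssim 2^{nd}$ cells of $\mathcal{P}_n$ inside $\mathcal{C}_\ell$ and applying Cauchy--Schwarz in the form $\sum_F\mu(F)^{1/2}\le(2^{nd}\mu(\mathcal{C}_\ell))^{1/2}$, together with $\sum_F\mu(F)=\mu(\mathcal{C}_\ell)$, yields the aggregated bound $\min\{\,2\mu(\mathcal{C}_\ell),\ 2^{nd/2}M^{-1/2}\mu(\mathcal{C}_\ell)^{1/2}\,\}$; and Markov's inequality gives $\mu(\mathcal{C}_\ell)\le C\,2^{-p\ell}m_p(\mu)$ for $\ell\ge1$. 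Taking expectations in the annulus-wise multiscale bound and inserting all of this, the theorem is reduced to estimating the deterministic sum
\[
\sum_{\ell\ge0}\Bigl[\,2^\ell\min\{\mu(\mathcal{C}_\ell),\,M^{-1/2}\mu(\mathcal{C}_\ell)^{1/2}\}\;+\;\sum_{n\ge1}2^{\ell-n}\min\{\mu(\mathcal{C}_\ell),\,2^{nd/2}M^{-1/2}\mu(\mathcal{C}_\ell)^{1/2}\}\,\Bigr].
\]

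Optimising this sum is the step I expect to be the real obstacle, being a genuinely two-parameter balancing act. For fixed $\ell$ one first resolves the inner sum over $n$ by splitting at the resolution scale $2^{nd}\approx M\mu(\mathcal{C}_\ell)$: the geometric ratio of the resulting series equals $2^{d/2-1}$, which is $>1$, $=1$, $<1$ according as $d\ge3$, $d=2$, $d=1$, and this trichotomy is precisely what produces the factor $M^{-1/d}$, $M^{-1/2}\log(1+M)$, or $M^{-1/2}$. One then performs the outer sum over $\ell$ with $\mu(\mathcal{C}_\ell)\lesssim 2^{-p\ell}m_p(\mu)$, noting that the annuli with $\mu(\mathcal{C}_\ell)\gtrsim 1/M$ — i.e.\ $2^\ell\lesssim(m_p(\mu)M)^{1/p}$ — reproduce the $M^{-1/d}$-type term, while the remaining annuli fall in the $2\mu(\mathcal{C}_\ell)$ branch and their contribution is a geometric tail of ratio $2^{1-p}<1$ whose dominant term is of order $m_p(\mu)^{1/p}M^{-(p-1)/p}$; the coarse term $2^\ell\min\{\mu(\mathcal{C}_\ell),M^{-1/2}\mu(\mathcal{C}_\ell)^{1/2}\}$ is treated the same way. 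Collecting the dominant contributions gives the asserted estimate. The excluded exponents — $p=2$ for $d\in\{1,2\}$ and $p=d/(d-1)$ for $d\ge3$ — are exactly those at which one of the geometric series met above has ratio equal to $1$, so that an additional logarithmic factor intervenes and a slightly different formula is needed; away from them the clean three-case statement holds. The conceptual ingredients — the multiscale duality bound, the binomial variance, Markov's inequality — are all soft, and the genuine difficulty lies entirely in the bookkeeping of the two families of dyadic scales and of which geometric series dominates in each range of $(d,p)$.
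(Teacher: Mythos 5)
The paper does not actually prove this statement --- it is quoted directly from Fournier and Guillin \cite{Fou-Gui} --- so there is no internal proof to compare against; your outline is, in substance, precisely the argument of that reference: their multiscale lemma is your annulus-plus-dyadic duality bound, and the binomial variance, the Cauchy--Schwarz aggregation over the $2^{nd}$ cells, Markov's inequality on the annular masses, and the two-parameter optimisation over the scales $(n,\ell)$ are carried out there exactly as you describe. The sketch is correct (modulo the routine but unexecuted gluing of the annular couplings and the bookkeeping you flag yourself), including the identification of the excluded exponents $p=2$ for $d\le 2$ and $p=\tfrac{d}{d-1}$ for $d\ge 3$ as the parameters at which one of the geometric series becomes critical.
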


We recall the notion of \emph{pathwise solution} to a stochastic differential equation that will be used throughout the paper.
\begin{definition}\label{d:pathwise}
We say that $X \in \mathcal{M}(\Omega; C([0,T]; \R^d))$  is a \emph{pathwise} (or \emph{strong}) \emph{solution} to the stochastic differential equation 
$$\begin{cases}
\de X(t) = v(t,X(t))\,\de t + \sqrt{2\sigma}\,\de W(t),\\
X(0) = \overline{X}_0,
\end{cases}$$
for a given initial datum $\overline{X}_0\in L^p(\Omega;\R^d)$ and Brownian motion $W$, if there holds
$$X(t) = \overline{X}_0 +\int_0^t v(\tau,X(\tau))\,\de\tau+\sqrt{2\sigma} W(t)\quad\text{for $\mathbb{P}$-a.e.~$\omega\in\Omega$ and for every $t\in[0,T]$.}$$
\end{definition}  
The explicit dependence on the stochastic variable~$\omega\in\Omega$ has been omitted above, as will be done throughout the paper when no ambiguity arises.

We point out that if $X^{1}, X^{2} \in  \mathcal{M}(\Omega; C([0,T]; \R^d))$, $\bmu^{i} = {\rm Law} (X^{i}) \in \mathcal{P} (C([0, T]; \R^{d}))$, and $\mu^{i}_{t} = (\ev_{t})_{\#} \bmu^{i}$ for $i = 1, 2$, we have the following elementary inequalities, which stem out of the definition of~$\cW_{1}$:
\begin{align}
\cW_{1} (\bmu^{1}, \bmu^{2}) & \leq \mathbb{E} \bigg( \sup_{t \in [0, T]} | X^{1}(t) - X^{2}(t)| \bigg) \label{e:bmu}\,,
\\
\cW_{1} (\mu^{1}_{t} , \mu^{2}_{t} ) & \leq \mathbb{E} \big( | X^{1}(t) - X^{2}(t)| \big) \,. \label{e:muu}  
\end{align}

Finally, we say that a function $\rho \in L^{1} (\R^{d})$ has finite entropy if $\rho > 0$ and and
\begin{displaymath}
\int_{\R^{d}} \rho(x)  \ln (\rho(x)) \, \di x <+\infty\,.
\end{displaymath}

\section{A model problem}
\label{s:general}

We introduce a model control problem for the dynamics of a population with density~$\mu$ steered by means of another population of controllers with density~$\nu$.
To this aim, for $L, R>0$ and $q \in (1, +\infty]$ we define the class of admissible measure-valued curves
\begin{equation}
\mathcal{A} (q,  L, R) \coloneqq C ([0, T] ; \mathcal{P}_{1} (\R^{d})) \times \{ \nu \in  {\rm Lip}_{L} ([0, T]; \mathcal{P}_{1} (\R^{d})): \, m_{q} (\nu_{t}) \leq R \text{ for $t \in [0, T]$} \}\,.
\end{equation}
We fix a velocity field $v \colon \R^{d} \times \mathcal{P}_{1}(\R^{d})\times \mathcal{P}_{1}(\R^{d}) \to \R^{d}$ such that the following Lipschitz condition is satisfied: there exists a constant $L_v>0$ such that 
\begin{equation}\label{e:vLip}\tag{$v$-Lip}
\lvert v(x_1,\mu_1,\nu_1)-v(x_2,\mu_2,\nu_2)\rvert \leq L_v (\lvert x_1-x_2\rvert+\cW_1(\mu_1,\mu_2)+\cW_1(\nu_1,\nu_2))\,,
\end{equation}
for every $(x_1,\mu_1,\nu_1),(x_2,\mu_2,\nu_2)\in \R^{d} \times \mathcal{P}_{1}(\R^{d})\times \mathcal{P}_{1}(\R^{d})$.
We notice that condition \eqref{e:vLip} implies that there exists a constant $M_v>0$ such that 
\begin{equation}\label{e:old_v2}
|v( x, \mu, \nu)| \leq M_{v} (1 + |x| + m_{1} (\mu) + m_{1} (\nu))\,.
\end{equation}
From now on, we use the notation $v_{\mu, \nu} (x) = v( x, \mu, \nu)$.


On the set $\mathcal{A} (q, L, R)$ we want to solve the following control problem:

\begin{mathframed}
\textbf{Problem 1}
\begin{align}
\label{e:minJ}
&\min_{(\mu, \nu) \in \mathcal{A}(q, L, R)} \, \J (\mu, \nu)\,,\\
&
\label{e:system}
\text{subject to } \begin{cases}
\partial_{t} \mu_{t} - \sigma \Delta \mu_{t} = - \dive (v_{\mu_{t}, \nu_{t}} \mu_{t})\,,\\[1mm]
\mu_{0} = \overline{\mu}_{0}\,,
\end{cases}
\end{align}\vspace{2mm}
\end{mathframed}
\noindent for a given cost functional~$\J \colon C([0,T];\cP_1(\R^d))\times C([0,T];\cP_1(\R^d)) \to \R \cup \{+\infty\}$ which is lower-semicontinuous with respect to the convergence in $C([0,T];\cP_1(\R^d)) \times C([0,T];\cP_1(\R^d))$.
Notice that this is the exact analogue of Problem 1 in~\cite{bongini2016optimal}, up to the addition of a diffusive term coming from stochastic noise for the dynamics of~$\mu$.

In order to show existence of solutions to~\eqref{e:minJ}--\eqref{e:system}, it is convenient to first study the well-posedness of the PDE~\eqref{e:system} when $\nu \in {\rm Lip}_{L} ([0, T]; \mathcal{P}_{1}(\R^{d}))$ is fixed. 
To simplify the notation, for $t \in [0, T]$, $x \in \R^{d}$, and $\mu \in \mathcal{P}_{1}(\R^{d})$, we set 
\begin{equation}
\label{e:tildev}
\tilde{v}_{\mu} (t, x) \coloneqq v_{\mu, \nu_{t}} (x)\,.
\end{equation}
In what follows, we show that~\eqref{e:system} is equivalent to the SDE
\begin{equation}
\label{e:SDE}
\left\{
\begin{array}{lll}
\di X(t) = \tilde{v}_{\mu_{t}} (t, X(t)) \,\di t + \sqrt{2\sigma} \, \di W(t)\,,\\[2mm]
X(0) = \overline{X}_{0}\,, \ {\rm Law}(\overline{X}_{0}) = \overline{\mu}_{0}\,,\\[2mm]
\bmu= {\rm Law} (X)\,, \ \mu_{t} = ({\rm ev}_{t})_{\#} \bmu\,.
\end{array}\right.
\end{equation}
To this purpose, we start by showing, in the next theorem, existence and uniqueness of solutions to~\eqref{e:SDE}, together with some estimates (notice that only continuity of the measure~$\nu$ is required).
We point out that the estimate~\eqref{e:holder} below will ensure the continuity of the solution $t\mapsto\mu_t$ to~\eqref{e:system}, and therefore grants its membership to the set~$\mathcal{A}(q,L,R)$.
From now on, we let $p>1$.

\begin{theorem}
\label{t:SDE}
Let $v \colon [0, T] \times \mathcal{P}_{1}(\R^{d}) \times \mathcal{P}_{1}(\R^{d}) \to \R^{d}$ satisfy~\eqref{e:vLip}, 
let $\nu \in C ([0, T]; \mathcal{P}_{1} (\R^{d}))$,  let~$\tilde{v}$ be defined as in~\eqref{e:tildev}, and let $\overline{X}_{0} \in L^{p} (\Om; \R^{d})$. Then, the SDE~\eqref{e:SDE} with initial condition~$\overline{X}_{0}$ admits a unique solution. Moreover, there exists $C= C(p, T, M_{v})>0$ such that
\begin{align}
\label{e:momentp}
m_{p} (\bmu) & \leq C\bigg( 1 + m_{p} (\overline{\mu}_{0}) +  \int_{0}^{T} m_{1}^{p} (\nu_{\tau}) \, \di \tau +  \bigg(  \frac{p}{p-1} \bigg)^{p}\mathbb{E} (|W(T)|^{p})\bigg) \,,\\
\cW_{1} (\mu_{t}, \mu_{s}) & \leq C \int_{s}^{t}  m_{1} (\nu_{\tau})  \, \di \tau + C | t - s| ^{\frac{1}{4}}  \label{e:holder}
\\
&
\qquad + C|t - s| \bigg( 1 +  m_{p} ( \overline{\mu}_{0})^{\frac{1}{p}} + \bigg( \int_{0}^{T} m_{1}^{p} (\nu_{\tau}) \, \di \tau \bigg)^{\frac{1}{p}} +  \mathbb{E} (|W(T)|^{p})^{\frac{1}{p}} \bigg). \nonumber
\end{align}
\end{theorem}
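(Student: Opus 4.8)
The plan is to recast the McKean--Vlasov equation \eqref{e:SDE} as a fixed-point problem for the law of the process on path space, and to solve it by a Picard iteration, exploiting that once the law appearing in the drift is frozen the equation becomes a classical It\^o SDE. Fix $\bmu\in\cP_1(C([0,T];\R^d))$ with $(\ev_0)_{\#}\bmu=\overline\mu_0$ and set $\mu_t\coloneqq(\ev_t)_{\#}\bmu$. By \eqref{e:vLip} the coefficient $(t,x)\mapsto\tilde v_{\mu_t}(t,x)=v(x,\mu_t,\nu_t)$ is Lipschitz in $x$ with constant $L_v$ uniformly in $t$; it is continuous in $t$, since $t\mapsto\mu_t$ is $\cW_1$-continuous (using $m_1(\mu_t)\le m_1(\bmu)$ and $\nu\in C([0,T];\cP_1(\R^d))$); and it has linear growth by \eqref{e:old_v2}, the map $\tau\mapsto m_1(\mu_\tau)+m_1(\nu_\tau)$ being bounded on $[0,T]$. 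Hence classical SDE theory yields a unique pathwise solution of the frozen equation, which makes the iteration $X^0\equiv\overline X_0$, $X^{n+1}\coloneqq$ (solution of the frozen SDE with $\mu^n_t={\rm Law}(X^n(t))$ in the drift) well defined.

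The core quantitative step, and the one I expect to require the most care, is the moment estimate \eqref{e:momentp}: the mean-field coupling has to be made to close within a Gronwall argument while the exact constants are tracked. Starting from the integral form of \eqref{e:SDE}, I would take the supremum over $[0,t]$, raise to the power $p$, separate by convexity the contributions of $\overline X_0$, of the drift integral, and of the noise, and take expectations. For the noise, Doob's $L^p$ maximal inequality gives $\mathbb{E}\big(\sup_{[0,T]}|W|^p\big)\le\big(\tfrac{p}{p-1}\big)^{p}\,\mathbb{E}\big(|W(T)|^p\big)$, which is precisely where that factor originates. For the drift, estimate \eqref{e:old_v2} combined with H\"older's inequality in time (exponent $p$) produces the term $\int_0^T m_1^{p}(\nu_\tau)\,\di\tau$ together with a term $\int_0^t\mathbb{E}\big(\sup_{[0,\tau]}|X|^p\big)\,\di\tau$; using the self-consistency $m_1(\mu_\tau)\le\mathbb{E}|X(\tau)|\le\mathbb{E}\big(\sup_{[0,\tau]}|X|^p\big)^{1/p}$, a Gronwall inequality for $\psi(t)\coloneqq\mathbb{E}\big(\sup_{[0,t]}|X(t)|^p\big)$ — made rigorous, if needed, by a routine stopping-time truncation to ensure $\psi$ is finite — closes the argument and yields \eqref{e:momentp} with $C=C(p,T,M_v)$. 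A completely analogous estimate (carried out, if convenient, on small time subintervals that are then patched together) keeps all the Picard iterates in a fixed bounded subset of $\cP_p(C([0,T];\R^d))$.

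For uniqueness and for the convergence of the iteration I would argue by a contraction in the $\mathbb{E}\big(\sup_{[0,\cdot]}|\cdot|\big)$ distance. Given two solutions (or two consecutive iterates) $X^1,X^2$ with laws $\bmu^1,\bmu^2$, subtract the integral equations — the noise cancels — apply \eqref{e:vLip}, and bound $\cW_1(\mu^1_\tau,\mu^2_\tau)\le\mathbb{E}|X^1(\tau)-X^2(\tau)|$ by \eqref{e:muu}. Setting $g(t)\coloneqq\mathbb{E}\big(\sup_{[0,t]}|X^1-X^2|\big)$ one gets $g(t)\le 2L_v\int_0^t g(\tau)\,\di\tau$, hence $g\equiv 0$ by Gronwall and uniqueness follows (also for the path laws, via \eqref{e:bmu}). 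The same computation for consecutive differences gives the standard bound $\mathbb{E}\big(\sup_{[0,T]}|X^{n+1}-X^n|\big)\le C(C'T)^n/n!$, so that $(X^n)_n$ is Cauchy in $L^1(\Omega;C([0,T];\R^d))$; its limit $X$ solves \eqref{e:SDE}, by passing to the limit in the integral equation using the Lipschitz continuity of $\tilde v$ and $\cW_1(\mu^n_t,\mu_t)\to 0$, and it inherits \eqref{e:momentp}.

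Finally, for the time regularity \eqref{e:holder} I would start again from \eqref{e:muu}, $\cW_1(\mu_t,\mu_s)\le\mathbb{E}|X(t)-X(s)|$, and split $X(t)-X(s)$ into the drift integral over $[s,t]$ and the Brownian increment $\sqrt{2\sigma}\,(W(t)-W(s))$. The drift part is controlled via \eqref{e:old_v2}, bounding $\mathbb{E}|X(\tau)|$ and $m_1(\mu_\tau)$ by $m_p(\bmu)^{1/p}$ and then inserting \eqref{e:momentp}; this yields the terms $C\int_s^t m_1(\nu_\tau)\,\di\tau$ and $C|t-s|\big(1+m_p(\overline\mu_0)^{1/p}+(\int_0^T m_1^{p}(\nu_\tau)\,\di\tau)^{1/p}+\mathbb{E}(|W(T)|^p)^{1/p}\big)$. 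For the noise, $\sqrt{2\sigma}\,\mathbb{E}|W(t)-W(s)|\le C|t-s|^{1/2}\le C\,T^{1/4}|t-s|^{1/4}$ produces the remaining $C|t-s|^{1/4}$ term, and the proof is complete.
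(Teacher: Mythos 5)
Your proposal is correct and follows essentially the same route as the paper: a fixed-point/Picard argument on path space for existence and uniqueness (the paper simply cites the Banach fixed-point argument of \cite[Theorem~3.1]{ASC_22}), Gr\"onwall combined with Doob's $L^p$ maximal inequality and the bound $m_1(\mu_\tau)\le m_p(\mu_\tau)^{1/p}$ for \eqref{e:momentp}, and the same drift/noise splitting with $\mathbb{E}|W(t)-W(s)|\lesssim |t-s|^{1/2}\le T^{1/4}|t-s|^{1/4}$ for \eqref{e:holder}. No gaps.
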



\begin{proof}
The existence and uniqueness of the solution follows by an adaptation of the Banach fixed point argument of~\cite[Theorem~3.1]{ASC_22}, which in turn only relies on the Lipschitz continuity of the velocity field~$v$ (see (v)) and on the fact that $\nu \in C([0, T]; \mathcal{P}_{1} (\R^{d}))$.

We now estimate the $p$-moment of $\bmu$. For $ t \in [0, T]$ and $\omega \in \Om$, by~\eqref{e:old_v2} we have that
\begin{align}
\label{e:X}
| X(t) | & \leq   |\overline{X}_{0}| + \int_{0}^{t} | \tilde{v}_{\mu_{t}} (\tau, X(\tau)) | \, \di \tau + \sqrt{2\sigma} | W(t)|
\\
&
\leq  |\overline{X}_{0}| + M_{v} \int_{0}^{t} (1 + | X(\tau)| + m_{1} (\nu_{\tau}) + m_{1} (\mu_{\tau})) \, \di \tau + \sqrt{2\sigma} | W(t)| \nonumber\,.
\end{align}
Taking the $p$-power of~\eqref{e:X} and applying Gr\"onwall inequality we get that (here $C$ is a positive constant depending on~$p$, $T$, $\sigma$, and $M_{v}$ which may vary from line to line)
\begin{align}
\label{e:X2}
|X(t)|^{p} & \leq C \bigg( 1 + | \overline{X}_{0}|^{p} + \int_{0}^{t}  (m^{p}_{1} (\nu_{\tau}) + m^{p}_{1} (\mu_{\tau})) \, \di \tau + | W(t)|^{p} \bigg)e^{M_vt}
\\
&
\leq C \bigg( 1 + | \overline{X}_{0}|^{p} + \int_{0}^{t}  (m^{p}_{1} (\nu_{\tau}) + m_{p} (\mu_{\tau})) \, \di \tau + | W(t)|^{p} \bigg) e^{M_vt}\,. \nonumber
\end{align}
Averaging~\eqref{e:X2} over $\Omega$ and applying again Gr\"onwall inequality and the Doob's maximal inequality~\cite{Revuz} we obtain for every $t \in [0, T]$
\begin{align}
\label{e:X3}
m_{p} (\mu_{t}) & \leq C e^{M_vt} \bigg( 1 + m_{p} (\overline{\mu}_{0}) +  \int_{0}^{t}  m^{p}_{1} (\nu_{\tau}) \, \di \tau +\bigg(  \frac{p}{p-1} \bigg)^{p}\mathbb{E} (|W(T)|^{p}) \bigg) e^{\frac{C}{M_v}e^{M_vt}}\,.
\\
&
\leq C e^{M_vT} \bigg( 1 + m_{p} (\overline{\mu}_{0}) +  \int_{0}^{T}  m^{p}_{1} (\nu_{\tau}) \, \di \tau +\bigg(  \frac{p}{p-1} \bigg)^{p}\mathbb{E} (|W(T)|^{p}) \bigg) e^{\frac{C}{M_v}e^{M_vT}}. \nonumber
\end{align}
Inserting~\eqref{e:X3} into~\eqref{e:X2} we may continue with
\begin{align}
\label{e:X4}
\!\! |X(t)|^{p} & \leq C \bigg( 1 + | \overline{X}_{0}|^{p} + m_{p} (\overline{\mu}_{0}) + \!\int_{0}^{T} \!\!\! m_{1}^{p} (\nu_{\tau}) \, \di \tau + |W(t) |^{p} + \bigg(  \frac{p}{p-1} \bigg)^{p}\mathbb{E} (|W(T)|^{p}) \bigg) \,.
\end{align}
Taking the supremum over $t \in [0, T]$ in~\eqref{e:X4} and applying once again Gr\"onwall and Doob's maximal inequality we infer that
\begin{align*}
m_{p} (\bmu) & \leq \mathbb{E} \Big(\sup_{t \in [0, T]}\, | X(t)|^{p}\Big) \leq C\bigg( 1 + m_{p} (\overline{\mu}_{0}) + \int_{0}^{T} m_{1}^{p} (\nu_{\tau}) \, \di \tau +  \bigg(  \frac{p}{p-1} \bigg)^{p}\mathbb{E} (|W(T)|^{p})\bigg) \,,
\end{align*}
which is exactly~\eqref{e:momentp}.

It remains to prove~\eqref{e:holder}. Let us fix $s<t \in [0, T]$ and $\omega \in \Om$. Then, by~\eqref{e:old_v2}, it holds
\begin{align}
\label{e:continuity}
| X(t) - X(s)| & \leq \int_{s}^{t} | \tilde{v}_{\mu_{\tau}} (\tau, X(\tau)) | \, \di \tau + | W(t) - W(s)|
\\
&
 \leq M_{v} \int_{s}^{t} ( 1 + | X(\tau)| + m_{1}(\mu_{\tau}) + m_{1}(\nu_{\tau}) ) \, \di \tau +  | W(t) - W(s)|\,. \nonumber
\end{align}
By H\"older inequality we have that $m_{1} (\mu_{t}) \leq m_{p} (\mu_{t}) ^{\frac{1}{p}}$. Hence, in view of~\eqref{e:momentp} we may continue in~\eqref{e:continuity} with
\begin{align}
\label{e:continuity2}
| X(t) - X(s)| & \leq C \int_{s}^{t} \big(1 + | X(\tau)|  + m_{1} (\nu_{\tau}) \big) \, \di \tau + | W(t) - W(s)| 
\\
&
\qquad + C|t - s| \bigg( m_{p} (\overline{\mu}_{0})^{\frac{1}{p}} + \bigg( \int_{0}^{T} m_{1}^{p} (\nu_{\tau}) \, \di \tau \bigg)^{\frac{1}{p}} +  \bigg(  \frac{p}{p-1} \bigg) \mathbb{E} (|W(T)|^{p})^{\frac{1}{p}} \bigg). \nonumber
\end{align}
Averaging~\eqref{e:continuity2} over $\Omega$, applying H\"older inequality for the noise term, and using~\eqref{e:muu} and~\eqref{e:momentp}, we get
\begin{align}
\cW_{1} (\mu_{t}, \mu_{s}) & \leq \mathbb{E} (|X(t) - X(s)|) \leq C \int_{s}^{t}  \big( m_{1} (\mu_{\tau}) + m_{1} (\nu_{\tau}) \big) \, \di \tau + \mathbb{E} (| W(t) - W(s)| ) 
\\
&
\qquad +  C|t - s| \bigg( 1 + m_{p} (\overline{\mu}_{0})^{\frac{1}{p}} + \bigg( \int_{0}^{T} m_{1}^{p} (\nu_{\tau}) \, \di \tau \bigg)^{\frac{1}{p}} +  \mathbb{E} (|W(T)|^{p})^{\frac{1}{p}} \bigg). \nonumber
\\
&
\leq C \int_{s}^{t} m_{1} (\nu_{\tau})  \, \di \tau + \mathbb{E} (| W(t) - W(s)|^{2} )^{\frac{1}{2}}  \nonumber
\\
&
\qquad +  C|t - s| \bigg( 1 + m_{p} ( \overline{\mu}_{0})^{\frac{1}{p}} + \bigg( \int_{0}^{T} m_{1}^{p} (\nu_{\tau}) \, \di \tau \bigg)^{\frac{1}{p}} +  \mathbb{E} (|W(T)|^{p})^{\frac{1}{p}} \bigg). \nonumber
\end{align}
Finally, by standard estimates of the Brownian motion (see, e.g.,~\cite{Evans}) we deduce that
\begin{align*}
\cW_{1} (\mu_{t}, \mu_{s}) & \leq C \int_{s}^{t}  m_{1} (\nu_{\tau})  \, \di \tau + C | t - s| ^{\frac{1}{4}}
\\
&
\qquad + C|t - s| \bigg( 1 +  m_{p} ( \overline{\mu}_{0})^{\frac{1}{p}} + \bigg( \int_{0}^{T} m_{1}^{p} (\nu_{\tau}) \, \di \tau \bigg)^{\frac{1}{p}} +  \mathbb{E} (|W(T)|^{p})^{\frac{1}{p}} \bigg). \nonumber
\end{align*}
This concludes the proof of~\eqref{e:holder} and of the theorem.
\end{proof}

We now show a continuity property of solutions to equation~\eqref{e:SDE} when varying~$\nu$.

\begin{proposition}
\label{p:12}
Let $v \colon [0, T] \times \mathcal{P}_{1}(\R^{d}) \times \mathcal{P}_{1}(\R^{d}) \to \R^{d}$ satisfy~\eqref{e:vLip}, let $\overline{X}_{0} \in L^{p} (\Om; \R^{d})$, and let~$\nu^{1}, \nu^{2} \in C([0, T]; \mathcal{P}_{1} (\R^{d}))$. 
Moreover, let $X^{1}, X^{2}  \in  \mathcal{M} (\Om;  C([0, T]; \R^{d}))$ be the corresponding solutions to~\eqref{e:SDE} with $\bmu^{i} = {\rm Law} (X^{i})$ and velocities~$\tilde{v}_{\mu^i_{t}} = v_{\mu^i_{t}, \nu^i_{t}}$ for $i =1, 2$. Then, there exists $C = C(v, T )>0$ such that for every $t \in [0, T]$
\begin{align}
\label{eee}
& \mathbb{E} \big( | X^{1} (t) - X^{2} (t)| \big) \leq C \int_{0}^{T} \cW_{1} (\nu^1_{t}, \nu^2_{t}) \, \di t \,,\\
& \label{eee-1} | X^{1} (t) - X^{2} (t)| \leq C \int_{0}^{T} \cW_{1} (\nu^1_{t}, \nu^2_{t}) \, \di t \qquad \text{for $\mathbb{P}$-a.e.~$\omega \in \Om$.}
\end{align} 
\end{proposition}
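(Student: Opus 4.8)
The plan is to take advantage of the fact that, by the standing convention of the paper, both $X^1$ and $X^2$ are pathwise solutions of~\eqref{e:SDE} driven by the \emph{same} Brownian motion $W$ and with the \emph{same} initial datum $\overline{X}_0$. Consequently, when we subtract the two integral identities provided by Definition~\ref{d:pathwise}, the stochastic contribution $\sqrt{2\sigma}\,W(t)$ cancels out and we are left, for $\mathbb{P}$-a.e.\ $\omega\in\Om$ and every $t\in[0,T]$, with
\[
X^1(t) - X^2(t) = \int_0^t \big( v(X^1(\tau),\mu^1_\tau,\nu^1_\tau) - v(X^2(\tau),\mu^2_\tau,\nu^2_\tau)\big)\,\di\tau.
\]
Using the Lipschitz assumption~\eqref{e:vLip} together with the triangle inequality, this yields the pathwise integral inequality
\[
|X^1(t) - X^2(t)| \leq L_v \int_0^t \Big( |X^1(\tau) - X^2(\tau)| + \cW_1(\mu^1_\tau,\mu^2_\tau) + \cW_1(\nu^1_\tau,\nu^2_\tau)\Big)\,\di\tau,
\]
which is the common starting point for both~\eqref{eee} and~\eqref{eee-1}.

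First I would prove~\eqref{eee}. Set $\psi(t) \coloneqq \mathbb{E}(|X^1(t) - X^2(t)|)$; this is finite since $\psi(t)\leq m_1(\bmu^1)+m_1(\bmu^2)<+\infty$ by Theorem~\ref{t:SDE}. Taking expectations in the pathwise inequality above and using~\eqref{e:muu} in the form $\cW_1(\mu^1_\tau,\mu^2_\tau)\leq\psi(\tau)$, we obtain
\[
\psi(t) \leq L_v\int_0^t \cW_1(\nu^1_\tau,\nu^2_\tau)\,\di\tau + 2L_v\int_0^t \psi(\tau)\,\di\tau.
\]
Since $t\mapsto L_v\int_0^t \cW_1(\nu^1_\tau,\nu^2_\tau)\,\di\tau$ is nondecreasing, Grönwall's inequality gives $\psi(t)\leq L_v e^{2L_v T}\int_0^T \cW_1(\nu^1_\tau,\nu^2_\tau)\,\di\tau$, which is~\eqref{eee} with $C=L_v e^{2L_v T}$. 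As a by-product we also get the deterministic bound $\cW_1(\mu^1_\tau,\mu^2_\tau)\leq C\int_0^T \cW_1(\nu^1_s,\nu^2_s)\,\di s$ for every $\tau\in[0,T]$.

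To obtain the pathwise statement~\eqref{eee-1}, I would feed this last bound, together with the trivial estimate $\cW_1(\nu^1_\tau,\nu^2_\tau)\leq\int_0^T\cW_1(\nu^1_s,\nu^2_s)\,\di s$, back into the pathwise inequality of the first step, getting, for $\mathbb{P}$-a.e.\ $\omega$,
\[
|X^1(t) - X^2(t)| \leq L_v\int_0^t |X^1(\tau) - X^2(\tau)|\,\di\tau + \widetilde{C}\int_0^T \cW_1(\nu^1_s,\nu^2_s)\,\di s
\]
for a constant $\widetilde{C}=\widetilde{C}(v,T)$. Since $X^i\in\mathcal{M}(\Om;C([0,T];\R^d))$, the map $t\mapsto|X^1(t)-X^2(t)|$ is continuous and finite for $\mathbb{P}$-a.e.\ $\omega$, so a second, now purely deterministic (in $\omega$), application of Grönwall's inequality yields~\eqref{eee-1} with $C=\widetilde{C}e^{L_v T}$.

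The argument is essentially routine; the only point deserving attention is the slight circularity caused by the appearance of $\cW_1(\mu^1_\tau,\mu^2_\tau)$ — which itself depends on the laws of $X^1,X^2$ — inside the pathwise Grönwall inequality. This is resolved by the two-step structure above: one first closes the estimate \emph{in expectation} (turning $\cW_1(\mu^1,\mu^2)$ into a controlled deterministic quantity via~\eqref{e:muu}), and only afterwards returns to the pathwise level, where $\cW_1(\mu^1,\mu^2)$ has already become a known forcing term. A minor book-keeping point is to invoke Theorem~\ref{t:SDE} to guarantee that the relevant moments are finite, so that Grönwall's lemma applies in both instances.
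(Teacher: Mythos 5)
Your proposal is correct and follows essentially the same route as the paper: the pathwise integral inequality obtained from \eqref{e:vLip} after the noise cancels, the key observation that $\cW_1(\mu^1_\tau,\mu^2_\tau)\leq\mathbb{E}(|X^1(\tau)-X^2(\tau)|)$ closes the estimate, and two applications of Gr\"onwall (one in expectation for \eqref{eee}, one pathwise for \eqref{eee-1}). The only difference is the order of the two Gr\"onwall steps (the paper first applies Gr\"onwall pathwise and then averages, while you average first), which is immaterial.
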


\begin{proof}
By~(v) we estimate for $\mathbb{P}$-a.e.~$\omega \in \Om$ and every $t \in [0, T]$
\begin{align}
\label{e:k1}
| X^{1}(t) - X^{2} (t)| & \leq \int_{0}^{t} | v_{\mu^1_{\tau}, \nu^1_{\tau}} (X^{1} (\tau)) - v_{\mu^2_{t}, \nu^2_{t}} (X^{2}(\tau)) | \, \di \tau
\\
&
\leq L_{v} \int_{0}^{t} \big( | X^{1} (\tau) - X^{2} (\tau) | + \cW_{1} (\mu^1_{\tau}, \mu^2_{\tau}) + \cW_{1} (\nu^1_{\tau}, \nu^2_{\tau}) \big) \di \tau \nonumber\,.
\end{align}
Applying Gr\"onwall estimate to~\eqref{e:k1} we infer that for $\mathbb{P}$-a.e.~$\omega \in \Om$ and every~$t \in [0, T]$
\begin{align}
\label{e:1000}
 | X^{1} (t) - X^{2} (t)| & \leq e^{L_{v} T } \int_{0}^{t} \big(\cW_{1} (\mu^1_{\tau}, \mu^2_{\tau}) + \cW_{1} (\nu^1_{\tau}, \nu^2_{\tau}) \big) \di \tau
 \\
 &
 \leq \int_{0}^{t} \Big( \mathbb{E} \Big( | X^{1} (\tau) - X^{2} (\tau)| \Big) + \cW_{1} (\nu^1_{\tau}, \nu^2_{\tau}) \Big) \di \tau\,. \nonumber
\end{align}
Integrating~\eqref{e:1000} over~$\Om$ and applying Gr\"onwall inequality we get~\eqref{eee}; substituting~\eqref{eee} into~\eqref{e:1000} we conclude for~\eqref{eee-1}.
\end{proof}


As a corollary of Proposition~\ref{p:12} we have the following.

\begin{corollary}
\label{p:continuity}
Let $v \colon [0, T] \times \mathcal{P}_{1}(\R^{d}) \times \mathcal{P}_{1}(\R^{d}) \to \R^{d}$ satisfy~\eqref{e:vLip}, 
let $\overline{X}_{0} \in L^{p} (\Om; \R^{d})$, and let $\nu^{k}, \nu \in C ([0, T]; \mathcal{P}_{1} (\R^{d}))$ be such that $\nu^{k} \to \nu$ in $C ([0, T]; \mathcal{P}_{1} (\R^{d}))$. Let moreover $X^{k}, X \in  \mathcal{M} (\Om;  C([0, T]; \R^{d}))$ be the corresponding solutions to~\eqref{e:SDE} with $\bmu^{k} = {\rm Law} (X^{k})$ and~$\bmu = {\rm Law} (X)$ and velocities~$\tilde{v}_{\mu^{k}_{ t}} = v_{\mu^{k}_{ t}, \nu^{k}_{ t}}$, $\tilde{v}_{\mu_{t}}= v_{\mu_{t}, \nu_{t}}$, respectively. Then, 
\begin{align}
\label{eee2}
& \lim_{k\to\infty}\, \sup_{t\in[0,T]} \mathbb{E} \big(  | X^{k} (t) - X(t) | \big) = 0\,,\\
& \label{eee3} \lim_{k \to \infty} \,  \cW_{1} (\bmu^{k} ,  \bmu ) = 0\,.
\end{align}
\end{corollary}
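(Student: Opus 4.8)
The plan is to derive both statements directly from the quantitative estimates in Proposition~\ref{p:12}, applied with $\nu^{1} = \nu^{k}$, $\nu^{2} = \nu$, and the corresponding solutions $X^{1} = X^{k}$, $X^{2} = X$. The only extra ingredient is that convergence $\nu^{k} \to \nu$ in $C([0, T]; \mathcal{P}_{1}(\R^{d}))$ means precisely $\sup_{t \in [0, T]} \cW_{1}(\nu^{k}_{t}, \nu_{t}) \to 0$, so that
\[
\int_{0}^{T} \cW_{1}(\nu^{k}_{t}, \nu_{t}) \, \di t \leq T \sup_{t \in [0, T]} \cW_{1}(\nu^{k}_{t}, \nu_{t}) \xrightarrow[k \to \infty]{} 0\,.
\]

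For \eqref{eee2}: estimate~\eqref{eee} in Proposition~\ref{p:12} gives, for every $t \in [0, T]$, the bound $\mathbb{E}(|X^{k}(t) - X(t)|) \leq C \int_{0}^{T} \cW_{1}(\nu^{k}_{\tau}, \nu_{\tau}) \, \di \tau$ with $C = C(v, T)$ independent of both $t$ and $k$. Taking the supremum over $t \in [0, T]$ and then letting $k \to \infty$ yields \eqref{eee2}.

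For \eqref{eee3}: the pointwise version~\eqref{eee-1} states that, for $\mathbb{P}$-a.e.\ $\omega \in \Om$, the bound $|X^{k}(t) - X(t)| \leq C \int_{0}^{T} \cW_{1}(\nu^{k}_{\tau}, \nu_{\tau}) \, \di \tau$ holds for \emph{every} $t \in [0, T]$ simultaneously; since the right-hand side does not depend on $t$, we may take the supremum over $t$ to get $\sup_{t \in [0, T]} |X^{k}(t) - X(t)| \leq C \int_{0}^{T} \cW_{1}(\nu^{k}_{\tau}, \nu_{\tau}) \, \di \tau$ almost surely. Taking expectations and invoking the elementary inequality~\eqref{e:bmu},
\[
\cW_{1}(\bmu^{k}, \bmu) \leq \mathbb{E} \Big( \sup_{t \in [0, T]} |X^{k}(t) - X(t)| \Big) \leq C \int_{0}^{T} \cW_{1}(\nu^{k}_{\tau}, \nu_{\tau}) \, \di \tau \xrightarrow[k \to \infty]{} 0\,,
\]
which is \eqref{eee3}.

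Since the whole argument reduces to Proposition~\ref{p:12}, there is no genuine obstacle here; the only point requiring a little care is the passage from the pointwise-in-$t$ estimate~\eqref{eee-1} to a uniform-in-$t$ one \emph{before} taking expectations. This step is legitimate precisely because the right-hand side of~\eqref{eee-1} is $t$-independent, and it is exactly what makes the almost sure bound~\eqref{eee-1} (rather than only the averaged bound~\eqref{eee}) the right tool for proving~\eqref{eee3}.
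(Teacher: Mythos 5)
Your proposal is correct and follows essentially the same route as the paper: both statements are obtained by specializing Proposition~\ref{p:12} to $\nu^{1}=\nu^{k}$, $\nu^{2}=\nu$, using~\eqref{eee} for the uniform-in-$t$ expectation bound~\eqref{eee2} and the pathwise estimate~\eqref{eee-1} (whose right-hand side is $t$-independent) together with~\eqref{e:bmu} for~\eqref{eee3}. No gaps.
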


\begin{proof}
For fixed $k\in\N$,
rewrite~\eqref{eee} 
for 
$X^1=X^k$, $\nu^1=\nu^k$ and $X^2=X$, $\nu^2=\nu$.
Then, taking the limit as~$k \to \infty$ and relying on the uniform convergence of~$\nu^{k}$ to~$\nu$ in $\mathcal{P}(\R^{d})$, we get~\eqref{eee2}. As for the convergence in~\eqref{eee3}, we consider~\eqref{eee-1} for $X^{1} =X^{k}$ and~$X^{2} = X$; take the supremum over~$[0, T]$ and integrate over~$\Omega$ and use~\eqref{e:bmu} to obtain
\begin{equation}
\label{e:1001}
\cW_{1} (\bmu^{k}, \bmu) \leq \mathbb{E} \bigg( \sup_{t \in [0, T]} \, | X^{k} (t) - X(t)| \bigg) \leq C \int_{0}^{T} \cW_{1} (\nu^{k}_{ t}, \nu_{t}) \, \di t\,.
\end{equation}
Passing to the limit as~$k \to \infty$ in~\eqref{e:1001} we get~\eqref{eee3}.
\end{proof}

We now show that, for a given $\nu\in C ([0, T]; \mathcal{P}_{1} (\R^{d}))$ and under suitable assumptions on the initial datum~$\overline{\mu}_{0}$, the PDE~\eqref{e:system} has a unique solution, which is the one generated by the law of the unique stochastic process~$X$ that solves the SDE~\eqref{e:SDE}.

\begin{theorem}
\label{t:equivalence}
Let $v \colon [0, T] \times\mathcal{P}_{1}(\R^{d}) \times \mathcal{P}_{1}(\R^{d}) \to \R^{d}$ satisfy~\eqref{e:vLip}, 
let $\nu \in C ([0, T]; \mathcal{P}_{1} (\R^{d}))$, and let~$\tilde{v}$ be defined as in~\eqref{e:tildev}. 
Let $\overline{\mu}_{0} \in \mathcal{P}_{2} (\R^{d})$ be of the form $\overline{\mu}_{0} = \overline{\rho}_{0} \, \di x$ for~$\overline{\rho}_{0} \in L^{1} (\R^{d})$ with finite entropy.
For~$\overline{X}_{0} \in L^{2} (\Om; \R^{d})$ such that ${\rm Law} (\overline{X}_{0}) = \overline{\mu}_{0}$, let $X$ be the unique solution to~\eqref{e:SDE} with initial condition~$\overline{X}_{0}$. 
Then, the corresponding~$\mu_t$ is the unique solution to
\begin{equation}
\label{e:equivalence}
\left\{
\begin{array}{ll}
\partial_{t} \mu_{t} - \sigma \Delta \mu_{t} = - \dive (\tilde{v}_{\mu_{t}} (t) \mu_{t})\,.\\[1mm]
\mu_{0}= \overline{\mu}_{0}\,.
\end{array}
\right.
\end{equation} 
\end{theorem}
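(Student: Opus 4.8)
The plan is to prove the theorem in two parts: first, that the curve $t \mapsto \mu_t$ generated by the law of the solution $X$ of the SDE~\eqref{e:SDE} is \emph{a} solution to the Fokker--Planck equation~\eqref{e:equivalence}, and second, that this solution is \emph{unique}. For the first part, the natural tool is Itô's formula: for a test function $\varphi \in C_c^\infty(\R^d)$, apply Itô's formula to $\varphi(X(t))$ using the SDE, obtaining
\[
\varphi(X(t)) = \varphi(\overline{X}_0) + \int_0^t \nabla\varphi(X(\tau))\cdot\tilde v_{\mu_\tau}(\tau,X(\tau))\,\di\tau + \sqrt{2\sigma}\int_0^t \nabla\varphi(X(\tau))\cdot\di W(\tau) + \sigma\int_0^t \Delta\varphi(X(\tau))\,\di\tau.
\]
Taking the expectation, the stochastic integral vanishes (the integrand is bounded since $\nabla\varphi$ has compact support), and rewriting expectations as integrals against $\mu_t = (\ev_t)_\#\bmu$ gives precisely the weak formulation of~\eqref{e:equivalence}. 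The moment estimate~\eqref{e:momentp} from Theorem~\ref{t:SDE} (with $p=2$, using $\overline{X}_0\in L^2$) guarantees that all the integrals are finite and that $t\mapsto\mu_t$ has the required integrability; the Hölder estimate~\eqref{e:holder} gives continuity in $\cP_1(\R^d)$, so $\mu$ is an admissible curve.

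For the uniqueness part, the key point is that~\eqref{e:equivalence} is a \emph{linear} Fokker--Planck equation once we think of the drift as frozen: although $\tilde v_{\mu_t}$ depends on $\mu_t$, any two solutions $\mu^1, \mu^2$ to~\eqref{e:equivalence} are both, in particular, solutions of a continuity-plus-diffusion equation with their own (Lipschitz-in-space, by~\eqref{e:vLip}) drifts. The strategy I would follow is: given a solution $\mu$ to~\eqref{e:equivalence}, freeze the drift $b(t,x) := \tilde v_{\mu_t}(t,x) = v(x,\mu_t,\nu_t)$, which by~\eqref{e:vLip} is Lipschitz in $x$ uniformly in $t$ and has linear growth by~\eqref{e:old_v2}; then the linear SDE $\di Y(t) = b(t,Y(t))\,\di t + \sqrt{2\sigma}\,\di W(t)$ with $Y(0)=\overline{X}_0$ has a unique strong solution, and by the superposition principle / uniqueness for linear Fokker--Planck equations with Lipschitz drift and nondegenerate diffusion (here one uses the finite-entropy assumption on $\overline\rho_0$ together with standard parabolic regularity, as in the references to~\cite{AmbForMorSav18,AmbGigSav08,AmbTre14}), the law of $Y$ is the \emph{only} solution to the linear equation with drift $b$. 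Hence $\mu_t = \mathrm{Law}(Y(t))$. But then $Y$ solves the same closed SDE~\eqref{e:SDE} as $X$ (since its drift $b(t,\cdot) = v(\cdot,\mu_t,\nu_t)$ and $\mu_t = \mathrm{Law}(Y(t))$), and by the uniqueness statement in Theorem~\ref{t:SDE}, $Y$ and $X$ have the same law, so $\mu_t = ({\rm ev}_t)_\#\mathrm{Law}(X) = ({\rm ev}_t)_\#\bmu$ for all $t$. This shows any solution of~\eqref{e:equivalence} coincides with the one produced in the first part.

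I expect the main obstacle to be the uniqueness for the linear Fokker--Planck equation: one must justify that a weak solution $\mu$ to $\partial_t\mu_t - \sigma\Delta\mu_t = -\dive(b\,\mu_t)$ with $b$ Lipschitz in space and linear growth, starting from an absolutely continuous initial datum with finite entropy, is necessarily the law of the associated (unique) diffusion process. This is where the finite-entropy hypothesis is used — it ensures, via the theory of the Fokker--Planck equation (DiPerna--Lions / Ambrosio-type uniqueness for the continuity equation upgraded to the diffusive case, or equivalently energy/entropy estimates making the superposition principle applicable), that the solution stays absolutely continuous and that the superposition measure on path space is concentrated on solutions of the SDE, which are pathwise unique. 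A clean way to package this is to invoke the superposition principle \cite[Theorem~5.2]{AmbForMorSav18} to lift $\mu$ to a measure $\bmu$ on $C([0,T];\R^d)$ concentrated on solutions of the ODE/SDE with drift $b$ plus Brownian part, and then pathwise uniqueness for the linear SDE (Lipschitz drift) forces $\bmu$ to be the law of $Y$; the remaining steps are routine. The verification that Itô's formula applies and that moments are controlled (first part) is technically straightforward given Theorem~\ref{t:SDE}.
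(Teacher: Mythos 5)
Your proposal is correct and follows essentially the same route as the paper: Itô's formula plus vanishing of the stochastic integral for existence, and for uniqueness the freeze-the-drift argument (fix the drift at a putative second solution, invoke uniqueness for the resulting \emph{linear} Fokker--Planck equation with sublinear/Lipschitz drift and finite-entropy data, identify that solution with the law of the associated linear SDE, and conclude via the uniqueness statement of Theorem~\ref{t:SDE}). The only difference is cosmetic: where you sketch a superposition-principle justification for the linear Fokker--Planck uniqueness, the paper simply cites an existing uniqueness result (\cite[Lemma~3.6]{ASC_22}, \cite[Theorem~3.3]{Bog_2007}) for that step.
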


\begin{proof}
In view of Theorem~\ref{t:SDE} and It\^{o}'s formula, $\mu_t$ is a solution to~\eqref{e:equivalence}. For the readers' convenience, we recall the standard argument.
For $\varphi \in C^\infty_c (\R^d)$, we apply It\^{o}'s formula \cite[Theorem 4.2.1]{Oksendal} and we obtain that
\begin{equation*}
\de\varphi (X(t)) = \langle \nabla \varphi(X(t)), \de X(t) \rangle + \sigma \Delta\varphi(X(t)) \, \de t.
\end{equation*}
Using~\eqref{e:SDE}, we get
\begin{equation*}
\de \varphi (X(t)) =  \nabla \varphi(X(t)) \cdot  \tilde{v}_{\mu_t}(t,X(t))+ \big\langle \nabla\varphi(X(t)),\sqrt{2 \sigma} \, \de W(t) \big\rangle+ \sigma \Delta\varphi(X(t)) \, \de t.
\end{equation*}
Integrating the above expression in It\^{o}'s sense we have
\begin{equation*}
\begin{split}
\varphi (X(t)) =&\, \varphi (\overline{X}_0) + \int_0^t  \nabla \varphi(X(\tau)) \cdot \tilde{v}_{\mu_\tau}(\tau,X(\tau))  \, \de\tau\\
&\, + \sqrt{2 \sigma} \int_0^t \nabla \varphi (X(\tau)) \, \de W(\tau) + \int_0^t \sigma \Delta\varphi(X(\tau)) \, \de\tau.
\end{split}
\end{equation*}
Since $\mathbb{E} \Big( \int_0^t \nabla \varphi (X(\tau)) \,\de W(\tau) \Big) = 0$ by \cite[Theorem 3.2.1]{Oksendal}, taking the expected value we get
\begin{equation*}
\int_{\R^d} \varphi (x)\, \de\mu_t(x) = \int_{\R^d} \varphi (x) \, \de\overline{\mu}_0(x) + \int_0^t \int_{\R^d} \big[  \nabla \varphi(x) \cdot \tilde{v}_{\mu_\tau}(\tau,x) + \sigma \Delta\varphi(x) \big] \, \de\mu_\tau(x) \de \tau
\end{equation*}
as required.
Let $\hat{\mu} \in C([0, T]; \mathcal{P}_{1} (\R^{d}))$ be another solution to~\eqref{e:equivalence}. Setting $\hat{v}(t, x) \coloneqq \tilde{v}_{\hat{\mu}_{t}} (t, x)$, by~\eqref{e:old_v2} 
we have that
\begin{equation}
\label{e:sublinear}
| \hat{v}(t, x) | = |v_{\hat{\mu}_{t}, \nu_{t}} (t, x)| \leq M_{v} ( 1 + m_{1} (\nu_{t}) + m_{1} (\hat{\mu}_{t}) + |x|)\,.
\end{equation} 
By continuity of~$\hat{\mu}$ and~$\nu$, we have that $m_{1} (\nu_{t})$ and $m_{1} (\hat{\mu}_{t})$ are uniformly bounded in~$[0, T]$. Thus, we deduce from~\eqref{e:sublinear} that $\hat{v}$ has sublinear growth. Hence, $\hat{v}(t, x)$ satisfies the assumptions of~\cite[Lemma~3.6]{ASC_22} (see also~\cite[Theorem~3.3]{Bog_2007}), which implies that~$\hat{\mu}$ is the unique solution to 
\begin{equation}
\label{e:fattostandard}
\left\{
\begin{array}{ll}
\partial_{t} \mu_{t} - \sigma \Delta \mu_{t} = - \dive (\hat{v}(t) \mu_{t})\,,\\[1mm]
\mu_{0} = \overline{\mu}_{0}\,.
\end{array}
\right.
\end{equation} 

Let us consider~$\widehat{X}$ the unique pathwise solution (see \cite[Theorem~5.2.1]{Oksendal}) to the SDE
\begin{equation}
\label{e:contr}
\left\{
\begin{array}{ll}
\di Y(t) = \hat{v} (t, Y(t)) \,\di t + \sqrt{2\sigma} \, \di W(t)\,,\\[2mm]
Y(0) = \overline{X}_{0}\,
\end{array}\right.
\end{equation}
and let $\hat{\bmu}\coloneqq {\rm Law}(\widehat{X})$.
The PDE~\eqref{e:fattostandard} is the Fokker--Planck equation associated with~\eqref{e:contr}, hence it has $(\ev_t)_{\#} \hat{\bmu}$ as a solution. By uniqueness of the solution to~\eqref{e:fattostandard}, we get that $\hat{\mu}_t=(\ev_t)_{\#} \hat{\bmu}$, so that $\widehat{X}$ is actually a solution to~\eqref{e:SDE}. 
By Theorem~\ref{t:SDE}, we conclude that $X= \widehat{X}$, which in particular implies that $\mu_t=\hat{\mu}_t$ for all $t\in[0,T]$, as desired.
\end{proof}

We are now in a position to show existence of solutions to the minimum problem~\eqref{e:minJ}--\eqref{e:system}.

\begin{theorem}
\label{t:existence}
Let $L, R>0$ and $q \in (1, +\infty]$ be fixed, and let $\overline{\mu}_{0} \in \mathcal{P}_{2} (\R^{d})$ be such that $\overline{\mu}_{0} = {\rm Law} (\overline{X}_{0}) = \overline{\rho}_{0} \, \di x$ for some~$\overline{X}_{0} \in L^{2} (\Om; \R^{d})$ and~$\overline{\rho}_{0} \in L^{1} (\R^{d})$ with finite entropy. Moreover, assume that the cost functional $\J \colon C([0, T]; \mathcal{P}_{1}(\R^{d})) \times C([0, T]; \mathcal{P}_{1}(\R^{d})) \to \R \cup \{+\infty\}$ is lower-semicontinuous with respect to the convergence in $C([0, T]; \mathcal{P}_{1}(\R^{d}))$. Then, the minimum problem~\eqref{e:minJ}--\eqref{e:system} admits solution.
\end{theorem}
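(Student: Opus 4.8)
The plan is to use the direct method of the calculus of variations. First I would take a minimizing sequence $(\mu^k, \nu^k) \in \mathcal{A}(q, L, R)$ for $\J$, where each $\mu^k$ solves the PDE in~\eqref{e:system} with driving measure $\nu^k$. Since $\nu^k \in {\rm Lip}_L([0, T]; \mathcal{P}_1(\R^d))$ with $m_q(\nu^k_t) \leq R$ uniformly, the tightness of $\{\nu^k_t\}$ (coming from the uniform $q$-moment bound with $q>1$) together with the uniform Lipschitz-in-time estimate yields, via a refined Ascoli--Arzelà argument in $C([0, T]; \mathcal{P}_1(\R^d))$, a subsequence (not relabeled) with $\nu^k \to \nu$ in $C([0, T]; \mathcal{P}_1(\R^d))$. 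Here one must be a little careful: one first gets uniform convergence in $\cW_1$ after passing through the weaker topology of narrow convergence, and then uses the uniform $q$-moment bound to upgrade narrow convergence to $\cW_1$-convergence. The limit $\nu$ satisfies the same Lipschitz bound (Lipschitz continuity is stable under pointwise, hence uniform, limits) and the same moment bound $m_q(\nu_t) \leq R$ by lower semicontinuity of $m_q$ under narrow convergence, so $\nu \in {\rm Lip}_L([0, T]; \mathcal{P}_1(\R^d))$ with the required moment constraint.

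Next I would pass to the limit in the state constraint. For each $k$, by Theorem~\ref{t:equivalence} the curve $\mu^k$ is the unique solution of~\eqref{e:equivalence} with datum $\nu^k$, equivalently $\mu^k_t = ({\rm ev}_t)_\# \bmu^k$ where $\bmu^k = {\rm Law}(X^k)$ and $X^k$ solves the SDE~\eqref{e:SDE} with the velocity $\tilde v_{\mu^k_t} = v_{\mu^k_t, \nu^k_t}$. By Corollary~\ref{p:continuity}, since $\nu^k \to \nu$ in $C([0, T]; \mathcal{P}_1(\R^d))$, we get $\cW_1(\bmu^k, \bmu) \to 0$ and $\sup_{t} \mathbb{E}(|X^k(t) - X(t)|) \to 0$, where $X$ is the solution to~\eqref{e:SDE} with driving measure $\nu$ and $\bmu = {\rm Law}(X)$. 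From~\eqref{e:muu} this gives $\sup_t \cW_1(\mu^k_t, \mu_t) \to 0$, i.e.\ $\mu^k \to \mu$ in $C([0, T]; \mathcal{P}_1(\R^d))$, and again by Theorem~\ref{t:equivalence} the limit $\mu$ is the (unique) solution of~\eqref{e:system} driven by $\nu$. Moreover, by estimate~\eqref{e:holder} in Theorem~\ref{t:SDE}, $\mu \in C([0, T]; \mathcal{P}_1(\R^d))$, so $(\mu, \nu) \in \mathcal{A}(q, L, R)$ and is thus an admissible competitor.

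Finally, lower semicontinuity of $\J$ with respect to convergence in $C([0, T]; \mathcal{P}_1(\R^d)) \times C([0, T]; \mathcal{P}_1(\R^d))$ gives
\[
\J(\mu, \nu) \leq \liminf_{k\to\infty} \J(\mu^k, \nu^k) = \inf_{\mathcal{A}(q,L,R)} \J,
\]
so $(\mu, \nu)$ is a minimizer. The step I expect to be the main (though still standard) obstacle is the compactness argument for $\nu^k$: one needs to carefully combine the equi-Lipschitz bound, the uniform $q$-moment bound (with $q > 1$, which forces uniform integrability and hence compactness in $\cW_1$ rather than merely narrow compactness), and stability of both constraints under the limit. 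Everything concerning the state equation is then handed to us cleanly by Corollary~\ref{p:continuity} and the equivalence Theorem~\ref{t:equivalence}, which is precisely why those were proved first.
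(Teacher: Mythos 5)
Your proposal is correct and follows essentially the same route as the paper's proof: Ascoli--Arzel\`a for the $\nu^k$ (using the uniform $q$-moment bound for $\cW_1$-compactness and the equi-Lipschitz bound), then Theorem~\ref{t:equivalence} plus Corollary~\ref{p:continuity} to pass to the limit in the state equation, and lower semicontinuity of~$\J$ to conclude. The only cosmetic difference is that the paper identifies the limit of $\mu^k$ and the solution $\widetilde\mu$ of the SDE driven by $\nu$ in two separate steps before observing they coincide, whereas you identify them directly.
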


\begin{proof}
We apply the Direct Method. Let $(\mu^{k}, \nu^{k}) \in \mathcal{A}(q, R, L)$ be a minimizing sequence for~\eqref{e:minJ}--\eqref{e:system}. In view of Theorem~\ref{t:equivalence}, we can write $\bmu^{k} = {\rm Law}(X^{k})$ for every $k \in \mathbb{N}$, where $X^{k}$ solves the SDE
\begin{displaymath}
\left\{
\begin{array}{lll}
\di X (t) = v_{\mu_{t}, \nu^{k}_{ t}} (X(t)) \, \di t + \sqrt{2\sigma} \, \di W(t)\,,\\[1mm]
X(0) = \overline{X}_{0}\,,\\[1mm]
\bmu = {\rm Law} (X), \, \mu_{t} = ({\rm ev}_{t})_{\#} \bmu\,. 
\end{array}
\right.
\end{displaymath}
By assumption we have a uniform bound on $m_{q} (\nu^{k}_{t})$ for $k \in \mathbb{N}$ and $t \in [0, T]$, with~$q \in (1, +\infty]$, which implies that the measures $\nu_t^k$ all belong to a fixed compact subset of~$\cP_1(\R^d)$. 
Moreover, $\nu^{k}$ is equi-Lipschitz continuous. By Ascoli--Arzel\'a Theorem, there exists $\nu \in {\rm Lip}_{L} ([0, T]; \mathcal{P}_{1} (\R^{d}))$ such that, up to a subsequence, $\nu^{k} \to \nu$ in $C([0, T]; \mathcal{P}_{1} (\R^{d}))$. It follows from Corollary~\ref{p:continuity} that 
$\bmu^{k} \to \bmu$ in $\mathcal{P}_{1} \big( C([0, T]; \R^{d})\big)$. 
In particular, by~\eqref{e:pfLip}, $\mu^k\to\mu$ in $C([0,T];\cP_1(\R^d))$ and 
\begin{equation}
\label{e:semiJ}
\mathcal{J} (\mu, \nu) \leq \liminf_{k\to\infty} \, \mathcal{J} (\mu^{k}, \nu^{k})\,.
\end{equation}

To conclude that $(\mu, \nu)$ is a minimizer, we are left to show that~\eqref{e:system} is satisfied. To this purpose, we exploit Theorem~\ref{t:equivalence} again and consider the solutions~$X^{k}$ to the SDE~\eqref{e:SDE} with $\tilde{v}_{\mu^{k}_{ t}} (t, \cdot) = v_{\mu^{k}_{ t}, \nu^{k}_{ t}} (\cdot)$ and initial condition~$\overline{X}_{0}$. In particular, we have that $\bmu^{k} = {\rm Law}(X^{k})$. Let us further consider the unique solution $\widetilde{X}$ to
\begin{equation}
\label{e:widetilde}
\begin{cases}
\di \widetilde{X} (t) = v_{\widetilde{\mu}_{t}, \nu_{t}} (\widetilde{X}(t)) \, \di t + \sqrt{2\sigma} \, \di W(t)\,,\\
X(0) = \overline{X}_{0}\,, \\ 
\widetilde{\bmu} = {\rm Law} (\widetilde{X})\,, \ \widetilde{\mu}_{t} = ({\rm ev}_{t})_{\#} \widetilde{\bmu}\,.
\end{cases}
\end{equation}
Proposition~\ref{p:continuity} implies that $\bmu^{k} \to \widetilde{\bmu}$ in $\mathcal{P}_{1} \big( C([0, T]; \R^{d})\big)$, hence $\mu^k_t\to\widetilde{\mu}_t$ in $\cP_1(\R^d)$, uniformly in $[0,T]$. Thus, the curves~$\mu$ and~$\widetilde{\mu}$ coincide. By Theorem~\ref{t:equivalence} we have that $(\mu, \nu)$ solves~\eqref{e:system}. This concludes the proof of the theorem.
\end{proof}

\section{Optimal control problem for a two-population dynamics}
\label{s:2pop}

In this section, we present a prototypical example of a mean-field optimal control problem of the form~\eqref{e:minJ}--\eqref{e:system} for the case of agents divided into two populations, leaders and followers. 
The population of followers is driven by a nonlinear Fokker--Planck equation taking into account noise effects on the behavior of the agents, whereas the controlled population (that of the leaders) has a deterministic behavior driven by a non-local continuity equation, in which the control vector field appears as an additional drift term.
The evolution of these two populations is described by a pair $(\mu,\nu)\in \big(C([0,T];\cP_1(\R^d))\big)^2$, where~$\mu$ is the density of the uncontrolled population subject to the additive noise and~$\nu$ is the density of the controlled one.
This is a more refined variant of Problem 1, where the population~$\mu$ is indirectly controlled through the action of the policy maker on the selected population~$\nu$; this action is encoded by adding a suitable drift term in the evolution equation for~$\nu$.

We fix
~$\overline{\mu}_{0}\in\cP_2(\R^d)$ of the form $\overline{\rho}_{0} \, \di x$ for a suitable~$\overline{\rho}_{0} \in L^{1} (\R^{d})$ with finite entropy. We also fix $q\in(1,+\infty]$ and $\overline{\nu}_{0} \in \mathcal{P}_{q} (\R^{d})$. 
Finally, let $K \subseteq \R^{d }$ be a compact convex set with $0 \in K$; for given $\Lambda, \Delta>0$, we define
\begin{displaymath}
\mathcal{G} := \{ g \in C(\mathcal{P}_{1}(\R^{d})) : \text{$g$ is $\Lambda$-Lipschitz and $\Delta$-bounded}\}\,.
\end{displaymath}

In our optimal control problem, the curve~$\nu$ considered in~\eqref{e:system} will be replaced by the solution of a controlled continuity equation. 
We assume that the control takes a multiplicative form; indeed, given $f \in L^{\infty} ([0, T] \times \R^{d}; K)$ and~$g \in \G$, the state equation for the pair $(\mu, \nu)$ reads
\begin{equation}
\label{e:mean-field}
\begin{cases}
\partial_{t} \mu_{t} - \sigma \Delta \mu_{t} = - \dive (v_{\mu_{t}, \nu_{t}} \mu_{t})\,,\\
\partial_{t} \nu_{t} + \dive\big(  (w_{\mu_{t}, \nu_{t}} + f (t, \cdot) g(\mu_{t})  ) \nu_{t} \big) = 0\,,\\
\mu_{0} = \overline{\mu}_{0}\,, \ \nu_{0} = \overline{\nu}_{0}\,,
\end{cases}
\end{equation}
for velocity fields $v,w$ satisfying \eqref{e:vLip}.
For given initial data $(\overline{\mu}_0,\overline{\nu}_0)\in\cP_2(\R^d)\times\cP_q(\R^d)$, we define the set
\begin{align}
\label{e:set}
\!\!\!\!\!\!\mathcal{S}(\overline{\mu}_{0}, \overline{\nu}_{0}) \coloneqq \bigg\{ & (\mu, \nu, \zeta, g)  \in \big(C([0, T];  \mathcal{P}_{1} (\R^{d}))\big)^2 
\times \mathcal{M}_{b} ([0, T]\times \R^{d}; \R^{d}) \times  \G:  \\
&
\text{$\zeta \ll g(\mu_t)\nu$ and $(\mu, \nu)$ solves~\eqref{e:mean-field} with $f = \frac{\di \zeta}{\di (g(\mu_{t}) \nu)} \in L^{1}_{\nu} ([0, T]\times \R^{d}; K)$} \bigg\}, \nonumber
\end{align}
where $L^{1}_{\nu} ([0, T]\times \R^{d}; K)$ is the space of integrable functions with respect to the measure~$\nu$.
For $(\mu, \nu, \zeta, g) \in \mathcal{S} (\overline{\mu}_{0}, \overline{\nu}_{0})$, we define the cost functional
\begin{equation}
\label{e:cost-mean}
E(\mu, \nu, \zeta, g)   \coloneqq \int_{0}^{T} \mathcal{L} (\mu_{t}, \nu_{t}) \, \di t +\Phi(\zeta, g) \,,
\end{equation}
where
\begin{equation}
\label{e:Phi}
\Phi(\zeta, g)  \coloneqq \min\bigg\{ \int_{0}^{T} \!\!\int_{\R^{d}}\!\! \phi (f(t, y) , g(\mu_{t}) ) \, \di \nu_{t} (y) \, \di t:  
 f \in L^{1}_{\nu}([0, T]\times \R^{d}; K), \, \zeta = f g(\mu_{t}) \nu \bigg\}\,.
\end{equation}
In~\eqref{e:cost-mean} and~\eqref{e:Phi} we consider a uniformly continuous Lagrangian cost 
\begin{displaymath}
\mathcal{L} \colon C([0, T];  \mathcal{P}_{1} (\R^{d})) \times C([0, T]; \mathcal{P}_{1} (\R^{d})) \times \mathcal{M}_{b} ([0, T]\times \R^{d}; \R^{d}) \times  \G \to [0, +\infty)
\end{displaymath}
and a control cost $\phi \colon \R^{d} \times \R \to [0, +\infty)$ such that
\begin{itemize}
\item[$(\phi1)$] $\phi$ is continuous;
\item[$(\phi2)$] $\phi (\cdot, \xi)$  is convex and has superlinear growth uniformly with respect to $\xi \in \R$;
\item[$(\phi3)$] $\phi(0, \xi) = 0$ for every $\xi \in \R$.
\end{itemize}
The optimal control problem reads as follows:
\begin{mathframed}
\textbf{Problem 2}
\begin{equation}
\label{e:min-mean}
\min \big\{ E(\mu, \nu, \zeta, g): \, (\mu, \nu, \zeta, g) \in \mathcal{S} (\overline{\mu}_{0}, \overline{\nu}_{0}) \}\,.
\end{equation}\vspace{1mm}
\end{mathframed}

\begin{remark}\label{r:Snonvuoto}
In stating the optimal control problem above, it is understood that the minimum is equal to~$+\infty$ if $\mathcal{S} (\overline{\mu}_{0}, \overline{\nu}_{0})=\emptyset$. 
Actually, the results on the particle approximation that we are going to establish in Sections~\ref{s:sez4} and~\ref{s:limit} entail, as a byproduct, that $\mathcal{S} (\overline{\mu}_{0}, \overline{\nu}_{0}) \neq \emptyset$ for every pair $(\overline{\mu}_{0}, \overline{\nu}_{0})$ considered here.
\end{remark}
In the following remark, we discuss the relation of Problem 2 with some previous contribution on the subject.
\begin{remark}\label{r:PB2}
We point out that Problem 2 is the natural generalization of the mean-field optimal control problem analyzed in~\cite{ASC_22} where, however, the leaders' population was constrained to be discrete with a fixed number~$m$ of individuals.
The multiplicative structure of the control can, in particular, also account for a purely offline control problem (when~$g$ is constant) or for a purely feedback control problem (when~$f$ is constant). 
Notice that, if~$\sigma=0$ and~$g$ is constant, we retrieve the control Problem 2 studied in~\cite{bongini2016optimal}.
Besides the addition of noise terms, another relevant improvement with respect to \cite[Problem~2]{bongini2016optimal} is that we can allow for very low regularity of the vector field~$f$ appearing in~\eqref{e:mean-field}.
\end{remark}

The justification of the definition~\eqref{e:Phi} of~$\Phi$ as a minimum and not as an infimum is postponed to Lemma~\ref{l:lsc-cost}, which also implies the lower semicontinuity of the cost functional~$E$. Instead, we start by providing some estimates on the moments~$m_{2} (\mu_t)$ and~$m_q(\nu_t)$ for $t\in[0,T]$, and on the modulus of continuity of the curve $t \mapsto (\mu_{t}, \nu_{t})$ solution to~\eqref{e:mean-field}.

\begin{lemma}
\label{l:spt-mp}
Let $(\overline{\mu}_0,\overline{\nu}_0)\in\cP_2(\R^d)\times\cP_q(\R^d)$ be such that $\overline{\mu}_{0}= \overline{\rho}_{0}\, \di x={\rm Law} (\overline{X}_{0})$ for some $\overline{\rho}_{0} \in L^{1} (\R^{d})$ with finite entropy and some $\overline{X}_0\in L^2(\Omega;\R^d)$.
Then there exists $0<r = r(T, \Delta, \Lambda, \overline{\mu}_{0}, \overline{\nu}_{0}, K)$ such that for every $(\mu, \nu, \zeta, g) \in  \mathcal{S}(\overline{\mu}_{0}, \overline{\nu}_{0})$ there holds
\begin{equation}
\label{e:mp} 
m_{2} (\mu_t)+m_1(\nu_t) \leq r \qquad \text{for every $t \in [0, T]$}.
\end{equation}
Moreover, there exists $0<L = L(T, \Delta, \Lambda, \overline{\mu}_{0}, \overline{\nu}_{0}, K)$ such that the curve $t \mapsto \mu_{t}$ belongs to $C^{0,1/4}([0,T];\cP_1(\R^d))$ with H\"older constant~$L$ and the curve $t \mapsto \nu_{t}$ belongs to ${\rm Lip}_{L} ([0, T]; \mathcal{P}_{1}(\R^{d}))$. 
\end{lemma}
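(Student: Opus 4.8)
The strategy is to decouple the two equations in~\eqref{e:mean-field} and treat the $\mu$-equation by the stochastic representation already developed in Section~\ref{s:general}, while controlling the $\nu$-equation by the classical estimates for non-local continuity equations with a bounded drift. First I would fix $(\mu,\nu,\zeta,g)\in\mathcal{S}(\overline{\mu}_0,\overline{\nu}_0)$ and set $f=\frac{\di\zeta}{\di(g(\mu_t)\nu)}$, so that $f\in L^\infty([0,T]\times\R^d;K)$ with $|f|\le C_K$ and $|g(\mu_t)|\le\Delta$. Since $\nu$ solves $\partial_t\nu_t+\dive((w_{\mu_t,\nu_t}+f(t,\cdot)g(\mu_t))\nu_t)=0$, it is transported by the (a priori only measurable in time, but with sublinear-in-$x$ growth via~\eqref{e:old_v2}) velocity $b_t(y)\coloneqq w_{\mu_t,\nu_t}(y)+f(t,y)g(\mu_t)$, whose modulus is bounded by $M_w(1+|y|+m_1(\mu_t)+m_1(\nu_t))+C_K\Delta$. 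Representing $\nu_t$ via the (superposition) flow of $b$ and applying a Grönwall argument to $\frac{\di}{\di t}m_1(\nu_t)$ gives $m_1(\nu_t)\le C(1+m_1(\mu_t))$ on $[0,T]$, with $C$ depending only on $T,M_w,C_K,\Delta,m_1(\overline{\nu}_0)$; similarly, propagating the $q$-moment (using $m_1(\mu_t)\le m_2(\mu_t)^{1/2}$) yields a bound on $m_q(\nu_t)$ in terms of $\sup_t m_2(\mu_t)$, hence the claimed membership $\nu\in\mathrm{Lip}_L$ once we bound $m_2(\mu_t)$ — that bound is what closes the loop.

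For the $\mu$-component, by Theorem~\ref{t:equivalence} we can write $\bmu=\mathrm{Law}(X)$ where $X$ solves the SDE~\eqref{e:SDE} with $\tilde v_{\mu_t}(t,\cdot)=v_{\mu_t,\nu_t}(\cdot)$, and Theorem~\ref{t:SDE} applies with $p=2$: estimate~\eqref{e:momentp} gives
\begin{equation*}
m_2(\mu_t)\le m_2(\bmu)\le C\Bigl(1+m_2(\overline{\mu}_0)+\int_0^T m_1^2(\nu_\tau)\,\di\tau+\mathbb{E}(|W(T)|^2)\Bigr),
\end{equation*}
with $C=C(T,M_v)$. Now I combine this with the $\nu$-estimate: inserting $m_1(\nu_\tau)\le C(1+m_1(\mu_\tau))\le C(1+m_2(\mu_\tau)^{1/2})$ and using $m_1^2(\nu_\tau)\le C(1+m_2(\mu_\tau))$, the inequality becomes $m_2(\mu_t)\le C(1+\int_0^t m_2(\mu_\tau)\,\di\tau)$, and Grönwall yields $\sup_{t\in[0,T]}m_2(\mu_t)\le r_0$ with $r_0$ depending only on $T,M_v,M_w,C_K,\Delta,\Lambda,m_2(\overline{\mu}_0),m_q(\overline{\nu}_0)$ (note $C_K$ is fixed by $K$, and $\mathbb{E}(|W(T)|^2)=d\,T$). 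Feeding $r_0$ back into the $\nu$-estimates gives the bound on $m_1(\nu_t)$ and on $m_q(\nu_t)$, establishing~\eqref{e:mp} with a suitable $r=r(T,\Delta,\Lambda,\overline{\mu}_0,\overline{\nu}_0,K)$.

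It remains to prove the regularity in time. For $\mu$: with the just-obtained uniform bound $\sup_t m_1(\nu_t)\le r$, estimate~\eqref{e:holder} of Theorem~\ref{t:SDE} directly gives $\cW_1(\mu_t,\mu_s)\le C|t-s|^{1/4}+C|t-s|(1+r+r_0^{1/2}+\ldots)\le L|t-s|^{1/4}$ on $[0,T]$, so $t\mapsto\mu_t\in C^{0,1/4}$ with a constant $L$ of the asserted form. For $\nu$: since the transporting field $b_t$ is bounded by $M_w(1+|y|+r+r)+C_K\Delta$ and $\nu_t$ is supported (in the $1$-Wasserstein sense) on a set controlled by the now-bounded first moment, a standard estimate $\cW_1(\nu_t,\nu_s)\le\int_s^t\!\int|b_\tau(y)|\,\di\nu_\tau(y)\,\di\tau$ gives $\cW_1(\nu_t,\nu_s)\le L|t-s|$, i.e.\ $t\mapsto\nu_t\in\mathrm{Lip}_L$, enlarging $L$ if necessary. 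The main obstacle is the circular dependence between the $\mu$- and $\nu$-moment bounds: both~\eqref{e:momentp} and the continuity-equation estimate produce a bound for one moment in terms of the other, and one must be careful that the resulting coupled Grönwall inequality genuinely closes — this works because each estimate costs only a $\int_0^t(\cdots)\,\di\tau$ term (no pointwise-in-time feedback), so the two can be added and absorbed. A minor technical point is that $f$ is only $L^\infty$ in time, so the $\nu$-equation must be handled either through the superposition principle (as the authors signal for Theorem~\ref{p:existence-mean}) or via the DiPerna–Lions / Ambrosio representation rather than a classical flow; all the moment computations, however, go through verbatim since only the sublinear-in-$x$, $L^1$-in-$t$ bound on the drift is used.
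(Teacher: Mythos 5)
Your proposal is correct and follows essentially the same route as the paper: the superposition principle applied to the $\nu$-equation combined with Gr\"onwall to bound $m_1(\nu_t)$ in terms of $\int_0^t m_1(\mu_\tau)\,\di\tau$, Theorem~\ref{t:equivalence} and estimate~\eqref{e:momentp} of Theorem~\ref{t:SDE} with $p=2$ to bound $m_2(\mu_t)$ in terms of $\int_0^T m_1^2(\nu_\tau)\,\di\tau$, and the observation that the coupled inequalities close under Gr\"onwall because each feedback term is a time integral. The regularity statements are obtained exactly as in the paper, via~\eqref{e:holder} for $\mu$ and the superposition representation of $\nu$ for its Lipschitz continuity.
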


\begin{proof}
We denote by~$C$ a generic positive constant depending on~$T$, $\Delta$, $\Lambda$, $\overline{\mu}_{0}$, $\overline{\nu}_{0}$, and~$K$, and which may vary from line to line.

Since $\mu, \nu \in C([0, T];  \mathcal{P}_{1} (\R^{d})) $, there exists~$\widetilde{C}>0$ such that $m_{1} (\mu_{t}), m_{1} (\nu_{t}) \leq \widetilde{C}$ for every $t \in [0, T]$. By definition of~$K$ and since~$w$ satisfies~\eqref{e:old_v2}, we have that
\begin{align}
\label{e:400}
\int_{0}^{T} \int_{\R^{d}} | w_{\mu_{t}, \nu_{t}} (y) + f(t, y)g(\mu_{t}) | \, \di \nu_{t} \, \di t & \leq  \int_{0}^{T} \int_{\R^{d}}\big(  M_{w} \big( 1 + m_{1} (\mu_{t}) + m_{1} (\nu_{t}) \big) + C \big) \, \di \nu_{t} \, \di t 
\\
&
\leq M_{w} (T + 2\widetilde{C})  + CT\,. \nonumber
\end{align}
Hence, we are in a position to apply the superposition principle (see, e.g.,~\cite[Theorem~5.2]{AmbForMorSav18} and~\cite[Section~8.2]{AmbGigSav08}) to the curve~$t\mapsto \nu_{t}$ solving the continuity equation
\begin{displaymath}
\begin{cases}
\partial_{t} \nu_{t} + \dive (b_{t} \, \nu_{t}) = 0 \,, \\ 
\nu_{0} =\overline{\nu}_{0}\,,
\end{cases}
\end{displaymath}
with velocity field $b_{t} (y) \coloneqq w_{\mu_{t}, \nu_{t}} (y) + f(t, y) g(\mu_{t}) $. In particular, there exists $\eeta \in \mathcal{P} (C([0, T]; \R^{d}))$ supported on solutions to the Cauchy problem
\begin{equation}
\label{e:superp}
\left\{
\begin{array}{ll}
\dot{\gamma} (t) = b_{t} (\gamma(t)) = w_{\mu_{t}, \nu_{t}} (\gamma(t)) +  f(t, \gamma(t)) g(\mu_{t}) \,,\\[1mm]
\gamma(0) \in {\rm spt} (\overline{\nu}_{0})
\end{array}
\right.
\end{equation}
and such that $\nu_{t} = ({\rm ev}_{t})_{\#} \eeta$. For every $\gamma \in C([0, T]; \mathbb{R}^{d})$ solution to~\eqref{e:superp}, we may estimate, as in \eqref{e:400},
\begin{align*}
| \gamma(t)| \leq |\gamma(0)| + \int_{0}^{t} \big( M_{w} ( 1 + m_{1} (\mu_{\tau}) + m_{1} (\nu_{\tau}) + | \gamma(\tau)| ) + C \big) \, \di \tau\,. 
\end{align*}
By Gr\"onwall inequality, we infer that for $\eeta$-a.e.~$\gamma$ there holds
\begin{align}
\label{e:405}
| \gamma(t)| \leq C\bigg(  |\gamma(0)| + \int_{0}^{t} \big(  1 + m_{1} (\mu_{\tau}) + m_{1} (\nu_{\tau})  \big)\, \di \tau \bigg) \,. 
\end{align}
Integrating~\eqref{e:405} over~$C([0, T]; \R^{d})$ with respect to~$\eeta$ we get that
\begin{align}
\label{e:401}
m_{1} (\nu_{t}) \leq   C\bigg(  m_{1} (\overline{\nu}_{0}) +  \int_{0}^{t} \big( 1 + m_{1} (\mu_{\tau}) + m_{1} (\nu_{\tau})\big) \, \di \tau \bigg) \,.
\end{align}
Applying Gr\"onwall inequality again, we deduce from~\eqref{e:401} that
\begin{align}
\label{e:402}
m_{1} (\nu_{t}) \leq C 
\bigg( 1 + m_{1} (\overline{\nu}_{0}) + \int_{0}^{t} m_{1} (\mu_{\tau})\, \di \tau \bigg) \,.
\end{align}

In order to estimate~$m_{2}(\mu_{t})$, we make use of Theorem~\ref{t:SDE} with $p=2$  and Theorem~\ref{t:equivalence}, which yield, together with~\eqref{e:402}, that for every $t \in [0, T]$
\begin{align*}
m_{2} (\mu_{t}) & \leq C\bigg( 1 + m_{2} (\overline{\mu}_{0}) +  \int_{0}^{t} m_{1}^{2} (\nu_{\tau}) \, \di \tau +  4\mathbb{E} (|W(T)|^{2})\bigg)
\\
&
\leq C\bigg( 1 + m_{2} (\overline{\mu}_{0}) +m^{2}_{1} (\overline{\nu}_{0}) +  \int_{0}^{t} m_{1}^{2} (\mu_{\tau}) \, \di \tau + 4\mathbb{E} (|W(T)|^{2})\bigg) \nonumber
\\
&
\leq C\bigg( 1 + m_{2} (\overline{\mu}_{0}) +m^{2}_{1} (\overline{\nu}_{0}) +  \int_{0}^{t} m_{2} (\mu_{\tau}) \, \di \tau +  4\mathbb{E} (|W(T)|^{2})\bigg). \nonumber
\end{align*}  
Applying Gr\"onwall inequality we deduce that
\begin{align}
\label{e:403}
m_{2} (\mu_{t}) \leq C \Big( 1 + m_{2} (\overline{\mu}_{0}) +m^{2}_{1} (\overline{\nu}_{0})  +  4\mathbb{E} (|W(T)|^{2}) \Big) \qquad \text{for $t \in [0, T]$}.
\end{align}
Combining~\eqref{e:402} and~\eqref{e:403}, by H\"{o}lder inequality we obtain for $t \in [0, T]$
\begin{align}
\label{e:404}
m_{1} (\nu_{t}) \leq C \Big( 1 +  m_{1} (\overline{\nu}_{0})  + \big( m_{2} (\overline{\mu}_{0}) + 4\mathbb{E} (|W(T)|^{2}) \big)^{\frac{1}{2}} \Big).
\end{align}
Putting~\eqref{e:403} and~\eqref{e:404} together yields~\eqref{e:mp}.


By~\eqref{e:holder} we have that $t \mapsto \mu_{t}$ is H\"older continuous with exponent~$1/4$ and constant~$L>0$ only depending on~$T$,~$r$,~$M_{v}$, and~$M_{w}$. It remains to prove that, up to a redefinition of~$L$, the map $t \mapsto \nu_{t}$ is $L$-Lipschitz continuous. To this aim, we estimate for~$\eeta$-a.e.~$\gamma$ and for every $s<t \in [0, T]$
\begin{align}
\label{e:1102}
| \gamma(t) - \gamma(s)| & \leq \int_{s}^{t} \big( | w_{\mu_{\tau}, \nu_{\tau}} (\gamma(\tau)) | + | g(\mu_{\tau}) f(\tau, \gamma(\tau))| \big) \, \di \tau
\\
&
\leq \int_{s}^{t} \big( M_{w} ( 1 + m_{1} (\mu_{\tau}) + m_{1} (\nu_{\tau}) + | \gamma(\tau)| ) + C \big) \, \di \tau\,. \nonumber
\end{align}
In view of the definition of the Wasserstein distance~$\cW_{1}$, inequality~\eqref{e:1102} implies that
\begin{align*}
\cW_{1} (\nu_{t}, \nu_{s}) & \leq \int_{C([0, T]; \R^{d})} | \gamma(t) - \gamma(s)| \, \di \eeta(\gamma) 
\\
&
\leq C\int_{s}^{t} \int_{C([0, T]; \R^{d})} \big( 1 + m_{1}(\mu_{\tau}) +  m_{1} (\nu_{\tau}) + | \gamma(\tau)| \big) \, \di \eeta(\gamma) \, \di \tau
\\
&
\leq C\int_{s}^{t} \big( 1 + m_{1}(\mu_{\tau}) +  m_{1} (\nu_{\tau}) + | \gamma(\tau)| \big) \, \di \tau\,.
\end{align*} 
Then, we infer from~\eqref{e:mp} that $t\mapsto\nu_t$ is Lipschitz continuous.
\end{proof}

Estimate~\eqref{e:mp} can be improved into an estimate on the $q$-th moment of~$\nu_t$.
\begin{lemma}\label{l:lemma3.2}
Let $(\overline{\mu}_0,\overline{\nu}_0)\in\cP_2(\R^d)\times\cP_q(\R^d)$ be such that $\overline{\mu}_{0}= \overline{\rho}_{0}\, \di x={\rm Law} (\overline{X}_{0})$ for some $\overline{\rho}_{0} \in L^{1} (\R^{d})$ with finite entropy and some $\overline{X}_0\in L^2(\Omega;\R^d)$.
Then there exists $0<R = R(T, \Delta, \Lambda, \overline{\mu}_{0}, \overline{\nu}_{0}, K)$ such that for every $(\mu, \nu, \zeta, g) \in  \mathcal{S}(\overline{\mu}_{0}, \overline{\nu}_{0})$ there holds
\begin{equation}
\label{e:mpq} 
m_{2} (\mu_t)+m_q(\nu_t) \leq R \qquad \text{for every $t \in [0, T]$}.
\end{equation}
\end{lemma}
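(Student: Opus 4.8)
The plan is to bootstrap from Lemma~\ref{l:spt-mp}, which already gives us a uniform bound $m_1(\nu_t)\le r$ and $m_2(\mu_t)\le r$ on $[0,T]$, and to propagate the $q$-th moment of $\overline{\nu}_0$ forward along the flow using the superposition principle. First I would recall from the proof of Lemma~\ref{l:spt-mp} that there exists $\eeta\in\mathcal{P}(C([0,T];\R^d))$, supported on solutions $\gamma$ of the characteristic ODE~\eqref{e:superp} with $\gamma(0)\in\spt(\overline{\nu}_0)$, such that $\nu_t=({\rm ev}_t)_\#\eeta$. Since $v$ (here $w$) satisfies~\eqref{e:old_v2} and $f\in L^\infty$ with values in the bounded set $K$, and since $g$ is $\Delta$-bounded, we have the pointwise estimate
\[
|\dot\gamma(t)|=|b_t(\gamma(t))|\le M_w\big(1+m_1(\mu_t)+m_1(\nu_t)+|\gamma(t)|\big)+C\le C\big(1+|\gamma(t)|\big),
\]
where in the last step we used the bound $m_1(\mu_t)+m_1(\nu_t)\le 2r$ from~\eqref{e:mp}. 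Hence $|\gamma(t)|\le |\gamma(0)|+C\int_0^t(1+|\gamma(\tau)|)\,\di\tau$.

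Next I would apply Gr\"onwall's inequality to obtain, for $\eeta$-a.e.\ $\gamma$ and every $t\in[0,T]$,
\[
|\gamma(t)|\le C\big(1+|\gamma(0)|\big),
\]
with $C=C(T,M_w,r,K,\Delta)$, hence $C=C(T,\Delta,\Lambda,\overline{\mu}_0,\overline{\nu}_0,K)$ since $r$ depends only on these quantities. Raising to the power $q$ (using $(a+b)^q\le 2^{q-1}(a^q+b^q)$) gives $|\gamma(t)|^q\le C(1+|\gamma(0)|^q)$. Then, taking a fixed base point $x_0\in\R^d$ (say $x_0=0$) and integrating over $C([0,T];\R^d)$ with respect to $\eeta$, and using that $\nu_t=({\rm ev}_t)_\#\eeta$ together with $\nu_0=({\rm ev}_0)_\#\eeta=\overline{\nu}_0$, we get
\[
m_q(\nu_t)=\int_{C([0,T];\R^d)}|\gamma(t)|^q\,\di\eeta(\gamma)\le C\Big(1+\int_{C([0,T];\R^d)}|\gamma(0)|^q\,\di\eeta(\gamma)\Big)=C\big(1+m_q(\overline{\nu}_0)\big).
\]
Since $\overline{\nu}_0\in\mathcal{P}_q(\R^d)$, the right-hand side is finite and independent of $(\mu,\nu,\zeta,g)\in\mathcal{S}(\overline{\mu}_0,\overline{\nu}_0)$ and of $t$. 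Combining this with the bound $m_2(\mu_t)\le r$ from~\eqref{e:mp} and setting $R\coloneqq r+C(1+m_q(\overline{\nu}_0))$ yields~\eqref{e:mpq}.

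I do not expect any serious obstacle here: the argument is a routine Gr\"onwall estimate along the characteristics, and the only point requiring a little care is that the Gr\"onwall constant must be shown to depend only on the allowed data $(T,\Delta,\Lambda,\overline{\mu}_0,\overline{\nu}_0,K)$ — but this follows because the drift bound feeds in only through $M_w$, $K$, $\Delta$, and the already-established uniform bound $r$ on $m_1(\mu_t)+m_1(\nu_t)$ from the previous lemma. A minor subtlety is that when $q=+\infty$ the statement should be read as a uniform bound on $\spt(\nu_t)$; in that case one uses the essential-supremum form of the estimate $|\gamma(t)|\le C(1+\|\,|\gamma(0)|\,\|_{L^\infty(\eeta)})$ together with $\spt(\overline{\nu}_0)$ bounded, which is immediate from the same pointwise Gr\"onwall bound. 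In all cases the existence of the superposition measure $\eeta$ is guaranteed exactly as in Lemma~\ref{l:spt-mp} via estimate~\eqref{e:400}, so nothing new needs to be verified for that step.
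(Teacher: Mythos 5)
Your argument is correct and reaches the same conclusion by a somewhat different route. The paper's proof is essentially a two-line reduction: using \eqref{e:old_v2}, the bound $m_2(\mu_t)\le r$ from Lemma~\ref{l:spt-mp}, and H\"older's inequality, it writes $|w_{\mu_t,\nu_t}(y)|\le M_w(1+\sqrt{r})+M_w(|y|+m_1(\nu_t))$ and then invokes the Eulerian moment-propagation result \cite[Proposition~5.3]{Flos} with $\theta(|y|)=|y|^q$ to conclude. You instead redo the propagation by hand on the Lagrangian side: superposition measure $\eeta$ for the continuity equation satisfied by $\nu$ (already constructed in the proof of Lemma~\ref{l:spt-mp}), a pointwise Gr\"onwall bound $|\gamma(t)|\le C(1+|\gamma(0)|)$ along characteristics, and push-forward to get $m_q(\nu_t)\le C(1+m_q(\overline{\nu}_0))$. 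This is a legitimate, self-contained alternative whose only cost is re-verifying what the cited proposition packages; its benefit is transparency and that the $q=+\infty$ case (uniform support bound) drops out of the same pointwise estimate, which you correctly note. One small inaccuracy: \eqref{e:mp} gives $m_2(\mu_t)+m_1(\nu_t)\le r$, not $m_1(\mu_t)+m_1(\nu_t)\le 2r$; you need the intermediate step $m_1(\mu_t)\le m_2(\mu_t)^{1/2}\le\sqrt{r}$ (exactly the H\"older step the paper makes explicit), after which your constant depends on the admissible data exactly as claimed.
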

\begin{proof}
It is enough to notice that by~\eqref{e:old_v2},~\eqref{e:411}, and H\"{o}lder inequality, we can estimate
\begin{equation*}
|w_{\mu_t,\mu_t}(y)|\leq M_{w} (1 + |y| + \sqrt{m_{2} (\mu_t)} + m_{1} (\nu_t))\leq M_w\big(1+\sqrt{r} \big) + M_w(|y| + m_1(\nu_t)).
\end{equation*}
By applying \cite[Proposition~5.3]{Flos} to the field $w_{\mu_t,\nu_t}(y)$ with $A=M_w(1+\sqrt{r})$, $B=M_w$, and $\theta(|y|)=|y|^q$, we infer~\eqref{e:mpq}.
\end{proof}

In order to establish existence of solutions to~\eqref{e:min-mean},  we now discuss the lower semicontinuity of the control cost functional~$\Phi$ along converging sequences in~$\mathcal{S} (\overline{\mu}_{0}, \overline{\nu}_{0})$. The result follows by a non-autonomous extension of~\cite[Lemma~9.4.3]{AmbGigSav08}. We remark that the following lemma justifies the definition of~$\Phi$ in~\eqref{e:Phi}. 

 
\begin{lemma}
\label{l:lsc-cost}
Let $(\overline{\mu}_0,\overline{\nu}_0)\in\cP_2(\R^d)\times\cP_q(\R^d)$ be such that $\overline{\mu}_{0}= \overline{\rho}_{0}\, \di x={\rm Law} (\overline{X}_{0})$ for some $\overline{\rho}_{0} \in L^{1} (\R^{d})$ with finite entropy and some $\overline{X}_0\in L^2(\Omega;\R^d)$.
Let $\overline{\nu}^{k}_{0} \in \cP_q(\R^d)$  be such that $\overline{\nu}^{k}_{0}  \to \overline{\nu}_{0}$ narrow in~$\mathcal{P}(\R^{d})$ as $k \to \infty$ and $\sup_{k\in\N} m_q(\overline{\nu}_0^k)<+\infty$. Moreover, let $(\mu^{k}, \nu^{k}, \zeta^{k}, g^{k} ) \in \mathcal{S}(\overline{\mu}_{0}, \overline{\nu}^{k}_{0})$ and $f^{k} \in L^{1}_{\nu^{k}} ([0, T]\times \R^{d}; K)$ satisfy $\zeta^{k} = f^{k} g^{k} (\mu^{k}_{ t}) \nu^{k}$ and
\begin{equation}
\label{e:505}
\sup_{k\in\N}  \int_{0}^{T} \int_{\R^{d}} \phi( f^{k}(t, y), g^{k}(\mu^{k}_{ t}) ) \, \di \nu^{k}_{t} (y) \, \di t < +\infty\,.
\end{equation}
Then there exists $(\mu, \nu, \zeta, g) \in \mathcal{S}(\overline{\mu}_{0}, \overline{\nu}_{0})$ such that, up to a (not relabeled) subsequence, $\mu^{k}\to \mu$ and $ \nu^{k}\to \nu$ in $C([0, T]; \mathcal{P}_{1} (\R^{d}))$, $\zeta^{k} \to \zeta$ weakly$^*$ in~$\mathcal{M}_{b} ([0, T]\times \R^{d}; \R^{d})$, and~$g^{k} \to g$ locally uniformly in $C(\mathcal{P}_{1} (\R^{d}))$. Furthermore, there exists~$f \in L^{1}_{\nu} ([0, T]\times \R^{d}; K)$ such that $\zeta = f g(\mu_{t}) \nu$ and
\begin{equation}
\label{e:lsc-cost}
\int_{0}^{T} \int_{\R^{d}} \phi( f(t, y), g(\mu_{t}) ) \, \di \nu_{t} (y) \, \di t  \leq \liminf_{k\to\infty}  \int_{0}^{T} \int_{\R^{d}} \phi( f^{k}(t, y), g^{k}(\mu^{k}_{ t}) ) \, \di \nu^{k}_{t} (y) \, \di t\,.
\end{equation}
\end{lemma}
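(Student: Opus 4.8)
The plan is to split the argument into a compactness part and a lower-semicontinuity part. For the compactness, I would first observe that Lemma~\ref{l:lemma3.2} applies uniformly along the sequence: since $\sup_k m_q(\overline{\nu}_0^k)<+\infty$ and the constants $\Lambda,\Delta,K$ are fixed, we obtain $m_2(\mu^k_t)+m_q(\nu^k_t)\le R$ and, by Lemma~\ref{l:spt-mp}, the curves $t\mapsto\mu^k_t$ are equi-H\"older-$1/4$ and the curves $t\mapsto\nu^k_t$ are equi-Lipschitz, with constants independent of $k$. The uniform $q$-moment bound with $q>1$ confines all $\mu^k_t,\nu^k_t$ to a fixed $\cW_1$-compact subset of $\cP_1(\R^d)$, so Ascoli--Arzel\`a gives, up to subsequence, $\mu^k\to\mu$ and $\nu^k\to\nu$ in $C([0,T];\cP_1(\R^d))$. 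For $g^k$: since $g^k$ is $\Lambda$-Lipschitz and $\Delta$-bounded on the metric space $\cP_1(\R^d)$, a diagonal/Ascoli argument on the separable space $\cP_1(\R^d)$ yields $g^k\to g$ locally uniformly with $g\in\G$. For $\zeta^k$: the bound $|\zeta^k|([0,T]\times\R^d)\le\int_0^T\!\!\int_{\R^d}|f^k|\,\di\nu^k_t\,\di t\le CT$ (using $f^k$ takes values in the bounded set $K$) gives weak$^*$ compactness in $\mathcal{M}_b([0,T]\times\R^d;\R^d)$, so $\zeta^k\to\zeta$ weakly$^*$ up to a further subsequence. Passing to the limit in the continuity equation for $\nu^k$ (the drift $w_{\mu^k_t,\nu^k_t}+f^k g^k(\mu^k_t)$ being controlled by the uniform moment bounds, and $\overline{\nu}^k_0\to\overline{\nu}_0$ narrowly) shows $(\mu,\nu)$ solves~\eqref{e:mean-field} with the datum $(\overline{\mu}_0,\overline{\nu}_0)$ and a limiting control; the $\mu$-equation passes to the limit by Corollary~\ref{p:continuity} / Theorem~\ref{t:equivalence}. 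One must still check $\zeta\ll g(\mu_t)\nu$ and identify $f=\frac{\di\zeta}{\di(g(\mu_t)\nu)}\in L^1_\nu([0,T]\times\R^d;K)$; this is exactly what the lower-semicontinuity machinery below delivers.

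For the lower-semicontinuity inequality~\eqref{e:lsc-cost}, I would follow the scheme of \cite[Lemma~9.4.3]{AmbGigSav08} adapted to the non-autonomous, $\xi$-dependent integrand. Introduce on $[0,T]\times\R^d$ the (finite, by the uniform moment bounds) measures $\bmu^k\coloneq \nu^k_t\otimes\di t$ together with the vector measures $\zeta^k=f^k g^k(\mu^k_t)\nu^k$ and the scalar measures $\eta^k\coloneq g^k(\mu^k_t)\nu^k$. Up to subsequence $\bmu^k\to\nu_t\otimes\di t\eqqcolon\bmu$ narrowly, $\zeta^k\to\zeta$ weakly$^*$, and $\eta^k\to\eta\coloneq g(\mu_t)\nu$ weakly$^*$ (the last using $g^k\to g$ locally uniformly and $\mu^k\to\mu$). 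The functional to be made lower semicontinuous is $\int\psi(t,y;\tfrac{\di\zeta^k}{\di\bmu^k},\tfrac{\di\eta^k}{\di\bmu^k})\,\di\bmu^k$ with $\psi(t,y;q,\xi)\coloneq\phi(q/\xi,\xi)\cdot\xi$ when $\xi>0$ (and suitably extended, recession-style, for $\xi=0$ using $(\phi3)$). The key structural facts are: $\phi(\cdot,\xi)$ convex with superlinear growth uniformly in $\xi$ makes the map $(q,\xi)\mapsto\psi(t,y;q,\xi)$ convex, positively $1$-homogeneous, and lower semicontinuous; continuity of $\phi$ handles the $(t,y)$-dependence; and the uniform bound~\eqref{e:505} together with the superlinear growth yields equi-integrability of $f^k$ with respect to $\nu^k_t\otimes\di t$, hence tightness preventing loss of mass to infinity. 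One then invokes a Reshetnyak-type / Ioffe-type lower-semicontinuity theorem for integral functionals of measures (as in \cite[Theorem~2.34]{AmbGigSav08} or the Buttazzo--Dal Maso framework) to conclude
\[
\int\psi\Big(t,y;\tfrac{\di\zeta}{\di\bmu},\tfrac{\di\eta}{\di\bmu}\Big)\di\bmu\le\liminf_k\int\psi\Big(t,y;\tfrac{\di\zeta^k}{\di\bmu^k},\tfrac{\di\eta^k}{\di\bmu^k}\Big)\di\bmu^k,
\]
which is precisely~\eqref{e:lsc-cost} once we note that the left-hand side forces $\zeta$ to be absolutely continuous with respect to $\eta=g(\mu_t)\nu$ (else the recession term makes it $+\infty$, contradicting the finite $\liminf$), with density $f\coloneq\di\zeta/\di(g(\mu_t)\nu)$ satisfying $\phi(f,g(\mu_t))\in L^1_\nu$; membership $f\in K$ $\nu_t\otimes\di t$-a.e.\ follows because $K$ is closed convex, $f^k\in K$, and $K$-valued densities are stable under this weak$^*$ passage (e.g.\ by a convexity/Minty argument testing against suitable bounded functions).

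The main obstacle I anticipate is the careful treatment of the integrand near $\xi=0$, i.e.\ where $g^k(\mu^k_t)$ or $g(\mu_t)$ vanishes. The functional $\phi(q/\xi,\xi)\xi$ must be extended to $\xi=0$ by its recession function in the $q$-variable, and one must verify this extension is jointly lower semicontinuous and that assumption $(\phi3)$ ($\phi(0,\xi)=0$) forces the extension to vanish on $\{q=0,\xi=0\}$ so that no spurious mass is created; simultaneously one must rule out concentration of $|\zeta^k|$ on the set where $\eta^k$ is small. Superlinear growth of $\phi(\cdot,\xi)$ uniform in $\xi$ is exactly the hypothesis that controls this, giving the equi-integrability that both prevents escape of mass at infinity and guarantees $\zeta\ll g(\mu_t)\nu$ in the limit; making this rigorous—rather than the routine convexity and narrow-convergence bookkeeping—is where the real work lies. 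A secondary technical point is verifying that the limit quadruple genuinely lies in $\mathcal{S}(\overline{\mu}_0,\overline{\nu}_0)$, namely that the limiting $f$ is the Radon--Nikodym density appearing in~\eqref{e:mean-field}; this is obtained by testing the limit continuity equation against smooth functions and using $\zeta^k\to\zeta$, $g^k\to g$, $\mu^k\to\mu$, together with the established absolute continuity.
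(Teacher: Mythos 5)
Your compactness step and the final verification that the limit quadruple solves \eqref{e:mean-field} coincide with the paper's argument. The genuine gap is in the lower-semicontinuity step. First, the integrand you propose, $\psi(q,\xi)=\xi\,\phi(q/\xi,\xi)$, is in general neither jointly convex nor positively $1$-homogeneous in $(q,\xi)$: the perspective construction $\xi\,\phi(q/\xi)$ has these properties only when $\phi$ does not depend on its second slot, whereas here the dependence of $\phi(\cdot,\xi)$ on $\xi$ is merely continuous (already $\phi(u,\xi)=|u|^{2}(1+\xi^{2})$ gives $\psi(q,\xi)=|q|^{2}/\xi+|q|^{2}\xi$, which is neither). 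Second, with $q=\di\zeta^{k}/\di\bmu^{k}=f^{k}g^{k}(\mu^{k}_{t})$ and $\xi=\di\eta^{k}/\di\bmu^{k}=g^{k}(\mu^{k}_{t})$ one has $\int\psi\,\di\bmu^{k}=\int\phi(f^{k},g^{k}(\mu^{k}_{t}))\,g^{k}(\mu^{k}_{t})\,\di\nu^{k}_{t}\,\di t$, which carries a spurious factor $g^{k}(\mu^{k}_{t})$ and is not the quantity in \eqref{e:505}; the integrand actually appearing in the cost is $\phi(q/\xi,\xi)$, which is not convex in $(q,\xi)$ even for $\xi$-independent $\phi$ (e.g.\ $|q|^{2}/\xi^{2}$). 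Note also that $g^{k}$ may vanish or be negative, so your auxiliary measure $g^{k}(\mu^{k}_{t})\nu^{k}$ is signed and the quotient $q/\xi$ is genuinely ill-behaved. For these reasons the Reshetnyak/Ioffe machinery does not apply as invoked, and this is precisely where the real content of the lemma lies.

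The structure to exploit is that $g^{k}(\mu^{k}_{t})$ is a function of $t$ alone converging \emph{uniformly} to $g(\mu_{t})$, so it should be treated as a continuously varying parameter, not as a density. The paper sets $\theta^{k}\coloneqq f^{k}\nu^{k}$, so that the density of $\theta^{k}$ with respect to $\nu^{k}$ is exactly $f^{k}$ and no division by $g^{k}$ ever occurs; it then partitions $[0,T]$ into $J$ intervals, replaces $g^{k}(\mu^{k}_{t})$ on each interval by the constant $g(\mu_{t_{j-1}})$ at the price of error terms $\omega_{\phi}\big(\Lambda\cW_{1}(\mu^{k}_{t_{j-1}},\mu^{k}_{t})\big)+\omega_{\phi}\big(|g^{k}(\mu^{k}_{t_{j-1}})-g(\mu_{t_{j-1}})|\big)$ (uniformly small in $k$ by the equi-H\"older continuity of $\mu^{k}$ and the local uniform convergence $g^{k}\to g$), and applies the fixed-parameter result \cite[Lemma~9.4.3]{AmbGigSav08} slab by slab. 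Superlinearity then yields $\theta\ll\nu$, compact convexity of $K$ gives $f=\di\theta/\di\nu\in K$, and $\zeta=g(\mu_{t})\theta=fg(\mu_{t})\nu$ follows from $g^{k}(\mu^{k}_{t})\to g(\mu_{t})$ in $C([0,T])$ together with $\theta^{k}\rightharpoonup\theta$. If you want to avoid the time discretization you would need a genuinely non-autonomous Ioffe-type statement with varying reference measures; the homogeneous-integrand route you describe does not go through.
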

\begin{proof}
Up to a subsequence (which we are not going to relabel), we may assume that the $\liminf$ in~\eqref{e:lsc-cost} is a limit. 
By Lemmas~\ref{l:spt-mp} and~\ref{l:lemma3.2}, there exist $L,R>0$ such that, for every $k \in \mathbb{N}$, $\nu^{k} \in {\rm Lip}_{L} ([0, T]; \mathcal{P}_{1} (\R^{d}))$, $\mu^{k} \in C^{0, 1/4}([0, T]; \mathcal{P}_{1} (\R^{d}))$ with H\"older constant~$L$, and 
$$\sup_{\substack {k\in\N\\ t\in[0,T]}} \big(m_2(\mu^k_t)+m_q(\nu^k_t)\big)\leq R.$$
In particular,~$\mu^{k}$ and~$\nu^{k}$ are sequences of equi-continuous curves with values in a compact subset of~$\mathcal{P}_{1} (\R^{d})$. 
By Ascoli--Arzel\'a Theorem, there exist $\mu, \nu \in C([0, T]; \mathcal{P}_{1} (\R^{d}))$ such that, up to a subsequence, $\mu^{k} \to \mu$ and $\nu^{k} \to \nu$ in $C([0, T]; \mathcal{P}_{1} (\R^{d}))$. Similarly, the sequence~$g^{k}$ is equi-Lipschitz and equi-bounded, and therefore admits a  subsequence (not relabeled) converging on compact subsets of $\cP_1(\R^d)$ to a limit~$g \in \G$.
Finally, $\zeta^{k}$ is bounded in~$\mathcal{M}_{b} ([0, T]\times \R^{d}; \R^{d})$. 
Thus, there exists $\zeta \in \mathcal{M}_{b} ([0, T]\times \R^{d}; \R^{d})$ such that, up to a subsequence,~$\zeta^{k} \to \zeta$ weakly$^*$ in $\mathcal{M}_{b} ([0, T]\times \R^{d}; \R^{d})$.

Without loss of generality, we may assume that $f^{k}(t,\cdot) = 0$ whenever $g^{k} (\mu^{k}_t) = 0$ for all $t\in[0,T]$. We recall that~$f^{k}$ takes values in the compact set~$K$ and that $g^{k}, g$ are $\Delta$-bounded and $\Lambda$-Lipschitz. Let us further set $\theta^{k} \coloneqq f^{k} \nu^{k}$. It is clear that $\theta^{k} \ll  \nu^{k}$ and that $f^{k} = \frac{\di \theta^{k}}{\di \nu^{k}}$ $\nu^{k}$-a.e.~in~$[0, T] \times \R^{d}$. Hence, we may write
 \begin{equation}
 \label{e:rewrite-cost}
 \int_{0}^{T} \int_{\R^{d}} \phi(f^{k}(t, y) , g^{k}(\mu^{k}_{t}) ) \, \di \nu^{k}_{ t} (y)\, \di t = \int_{[0, T] \times \R^{d}} \phi \bigg( \frac{\di \theta^{k}}{\di  \nu^{k}} (t, y) , g^{k} (\mu^{k}_{ t})\bigg) \, \di \nu^{k}(t, y) \,. 
 \end{equation} 
By the bounds on~$f^{k}$ and~$\nu^{k}$, we have that $\theta^{k}$ weakly$^*$ converges, up to a not relabeled subsequence, to some measure $\theta \in \mathcal{M}_{b} ([0, T]\times \R^{d}; \R^{d})$.
 
 Let us denote by $\omega_{\phi}$ a modulus of continuity of~$\phi$ on the compact set $K \times [-\Delta,\Delta]$. For $J \in \mathbb{N}$ and $j = 0, \ldots, J$ let us set $t_{j} \coloneqq \frac{jT}{J}$. Then, by~\eqref{e:rewrite-cost} and by the $\Lambda$-Lipschitz continuity of~$g^{k}$ we have that
  \begin{align}
 \label{e:cost-1}
 \int_{0}^{T} \int_{\R^{d}}&  \phi(f^{k}(t, y) , g^{k}(\mu^{k}_{ t}) ) \, \di \nu^{k}_{ t} (y)\, \di t 
 \\
 &
 \geq \sum_{j=1}^{J} \int_{[t_{j-1}, t_{j}]\times \R^{d}} \phi \bigg( \frac{\di \theta^{k}}{\di  \nu^{k}} (t, y) , g^{k} (\mu^{k}_{ t_{j-1}})\bigg) \, \di \nu^{k}(t, y) 
 - \frac{T}{J}\sum_{j=1}^{J} \omega_{\phi} \big( \Lambda \cW_{1} (\mu^{k}_{t_{j-1}}, \mu^{k}_{ t_{j}})\big) \,.  \nonumber
 \end{align} 
Arguing as in~\eqref{e:cost-1} we continue 
with
\begin{align}
\label{e:501.1}
 \int_{0}^{T} \int_{\R^{d}}&  \phi(f^{k}(t, y) , g^{k}(\mu^{k}_{ t}) )  \, \di \nu^{k}_{ t} (y)\, \di t 
 \\
 &
 \geq \sum_{j=1}^{J} \int_{[t_{j-1}, t_{j}]\times \R^{d}} \phi \bigg( \frac{\di \theta^{k}}{\di  \nu^{k}} (t, y) , g (\mu_{t_{j-1}})\bigg) \, \di \nu^{k}(t, y) -  \frac{T}{J}\sum_{j=1}^{J} \omega_{\phi} \big( \Lambda \cW_{1} (\mu^{k}_{ t_{j-1}}, \mu^{k}_{ t_{j}})\big) \nonumber
 \\
 &
\qquad   - \frac{T}{J} \sum_{j=1}^{J} \omega_{\phi} \big( | g^{k} (\mu^{k}_{ t_{j-1}}) - g (\mu_{t_{j-1}}) | \big)\,.  \nonumber
\end{align}
Let us fix~$\varepsilon>0$. Since $\mu^{k}$ and $\mu$ are equi-uniformly continuous in~$[0, T]$, there exists~$\overline{J} \in \mathbb{N}$ such that for every $J \geq \overline{J}$, every $j = 1, \ldots, J$, and every $k\in\N$
\begin{equation}
\label{e:500}
\omega_{\phi} \big( \Lambda \cW_{1} (\mu_{ t_{j-1}}, \mu_{ t_{j}}) \big) , \, \omega_{\phi} \big( \Lambda \cW_{1} (\mu^{k}_{ t_{j-1}}, \mu^{k}_{ t_{j}}) \big) \leq \frac{\varepsilon}{T} . 
\end{equation}
Combining~\eqref{e:501.1} and~\eqref{e:500} we deduce that for $J \geq \overline{J}$ and $k \in\mathbb{N}$
\begin{align}
\label{e:501}
&  \int_{0}^{T} \int_{\R^{d}}  \phi(f^{k}(t, y) , g^{k}(\mu^{k}_{ t}) )   \, \di \nu^{k}_{ t} (y)\, \di t 
 \\
 &
 \geq \sum_{j=1}^{J} \int_{[t_{j-1}, t_{j}] \times \R^{d}} \phi \bigg( \frac{\di \theta^{k}}{\di  \nu^{k}} (t, y) , g (\mu_{ t_{j-1}})\bigg) \, \di \nu^{k}(t, y)     - \frac{T}{J} \sum_{j=1}^{J} \omega_{\phi} \big( | g^{k} (\mu^{k}_{ t_{j-1}}) - g (\mu_{t_{j-1}}) | \big)- \varepsilon  \nonumber
\end{align}

Thanks to assumptions~$(\phi1)$ and~$(\phi2)$, to each term in the sum on the right-hand side of~\eqref{e:501} we can apply~\cite[Lemma~9.4.3]{AmbGigSav08} (see also~\cite[Proposition~5]{AMOP_2022}). Thus, for $j =1, \ldots , J$ we infer that the limit measure $\theta$ is such that its restriction~$\theta_{j}\coloneqq \theta \res ([t_{j-1}, t_{j}] \times \R^{d})$ satisfies $\theta_{j} \ll \nu \res ([t_{j-1}, t_{j}] \times \R^{d})$ and
\begin{align}
\label{e:502}
 \int_{[t_{j-1}, t_{j}] \times \R^{d}} \phi & \bigg( \frac{\di \theta_{j}}{\di  \nu} (t, y) , g (\mu_{t_{j-1}})\bigg) \, \di \nu (t, y) 
 \\
 &
 \leq \liminf_{k\to\infty}  \int_{[t_{j-1}, t_{j}] \times \R^{d}} \phi \bigg( \frac{\di \theta^{k}}{\di \nu^{k}} (t, y) , g (\mu_{t_{j-1}})\bigg) \, \di \nu^{k}(t, y)\,. \nonumber
\end{align}
This implies that $\theta \ll \nu$ in $[0, T] \times \R^{d}$. Moreover, since $K$ is compact and convex, setting $f \coloneqq \frac{\di \theta}{\di \nu}$, we have that $f \in K$ for $\nu$-a.e.~$(t, y) \in [0, T]\times \R^{d}$. Hence, summing up~\eqref{e:502} over~$j=1, \ldots, J$ and recalling~\eqref{e:501} we have that
\begin{align}
\label{e:503}
\sum_{j=1}^{J} \int_{[t_{j-1},t_j]\times \R^{d}} \phi & ( f(t, y) , g (\mu_{t_{j-1}})) \, \di \nu (t, y)
\\
&
 \leq \liminf_{k\to\infty}  \int_{0}^{T} \int_{\R^{d}} \phi(f^{k}(t, y) , g^{k}(\mu^{k}_{ t}) )   \, \di \nu^{k}_{ t} (y)\, \di t + \varepsilon\,. \nonumber
\end{align}
Repeating the argument of~\eqref{e:cost-1} we deduce from~\eqref{e:503} that
\begin{align*}
\int_{0}^{T}\int_{ \R^{d}} \phi ( f(t, y) , g (\mu_{t})) \, \di \nu_t(y)\,\de t \leq \liminf_{k\to\infty}  \int_{0}^{T} \int_{\R^{d}} \phi(f^{k}(t, y) , g^{k}(\mu^{k}_{ t}) )   \, \di \nu^{k}_{ t} (y)\, \di t + 2\varepsilon\,.
\end{align*}
Hence, we conclude for~\eqref{e:lsc-cost} by letting~$\varepsilon \to 0$. Since $\zeta^{k} = g^{k}(\mu^{k}_{ t}) \theta^{k}$, $g^{k} (\mu^{k}_{ t}) \to g (\mu_{t})$ in $C([0, T])$, and $\theta^{k}$ weakly$^*$ converges to~$\theta$ in $\mathcal{M}_{b} ([0, T]\times \R^{d}; \R^{d})$, we get that $\zeta = g(\mu_{t}) \theta = f g(\mu_{t}) \nu \ll \nu$. 

It remains to show that $(\mu, \nu, \zeta, g) \in \mathcal{S}(\overline{\mu}_{0}, \overline{\nu}_{0})$. In particular, we only have to prove that~$(\mu, \nu)$ solves system~\eqref{e:mean-field}. 
Since $(\mu^k,\nu^k,\zeta^k,g^k)\in\mathcal{S}(\overline{\mu}_0,\overline{\nu}_0^k)$, in view of the convergences proved above and of Corollary~\ref{p:continuity}, we obtain that $\mu$ is the unique solution to
\begin{equation}
\label{e:414.1}
\begin{cases}
\partial_{t} \mu_{t} - \sigma \Delta \mu_{t} = - \dive (v_{\mu_{t}, \nu_{t}} \mu_{t}) \,, \\
\mu_{0} = \overline{\mu}_{0}\,.
\end{cases}
\end{equation}
For every test function~$\varphi \in C^{1}_{c} ([0, T]; \R^{d})$ we have that
\begin{align}
\label{e:412}
\int_{\R^{d}} \varphi(T, x) \, \di \nu^{k}_{ T} (x) & - \int_{\R^{d}} \varphi(0, x) \, \di \overline{\nu}_{0} (x) = \int_{0}^{T} \int_{\R^{d}} \partial_{t} \varphi (t, x) \, \di \nu^{k}_{t} (x) \, \di t 
\\
&
+ \int_{0}^{T} \int_{\R^{d}} \nabla \varphi(t, x) \cdot ( w_{\mu^{k}_{ t}, \nu^{k}_{ t}} (t, x) + f^{k} (t, x)g^{k} (\mu^{k}_{ t}))\, \di \nu^{k}_{ t} (x) \, \di t\,. \nonumber
\end{align}
By~\eqref{e:vLip} and by the uniform convergence of~$\nu^{k}$ and~$\mu^{k}$ we deduce that
\begin{align}
\label{e:413}
\lim_{k\to\infty} \, \bigg| \int_{0}^{T}&  \int_{\R^{d}} \nabla \varphi(t, x) \cdot \big( w_{\mu^{k}_{ t}, \nu^{k}_{ t}} (x) - w_{\mu_{t}, \nu_{t}} ( x) )  \big)\, \di \nu^{k}_{ t} (x) \, \di t \bigg|
\\
&
\leq \lim_{k\to\infty}\,  \| \nabla \varphi\|_{\infty} \int_{0}^{T} \big( \cW_{1} (\mu_{t}, \mu^{k}_{ t}) + \cW_{1} (\nu_{t}, \nu^{k}_{ t}) \big)\,\de t =0\,. \nonumber
\end{align} 
Combining~\eqref{e:412},~\eqref{e:413}, the uniform convergence of~$\mu^{k}$ and~$\nu^{k}$ to~$\mu$ and~$\nu$, respectively, and the weak$^*$ convergence of~$\zeta^{k} = f^{k} g^{k} (\mu^{k}_{ t})  \nu^{k}$ to~$\zeta= f g(\mu_{t}) \nu$, we infer that~$\nu$ solves
\begin{equation}
\label{e:414}
\begin{cases}
\partial_{t} \nu_{t} + \dive \big( (w_{\mu_{t}, \nu_{t}} + f(t, \cdot) g(\mu_{t})) \nu_{t}\big) = 0\,, \\
\nu_{0} = \overline{\nu}_{0}\,.
\end{cases}
\end{equation}
Therefore,~\eqref{e:414.1} and~\eqref{e:414} imply that $(\mu, \nu,\zeta,  f) \in \mathcal{S}(\overline{\mu}_{0}, \overline{\nu}_{0})$ and the proof is concluded.
 \end{proof}

We are now in a position to prove existence of solutions to~\eqref{e:min-mean}.

\begin{theorem}
\label{p:existence-mean}
Let $(\overline{\mu}_0,\overline{\nu}_0)\in\cP_2(\R^d)\times\cP_q(\R^d)$ be such that $\overline{\mu}_{0}= \overline{\rho}_{0}\, \di x={\rm Law} (\overline{X}_{0})$ for some $\overline{\rho}_{0} \in L^{1} (\R^{d})$ with finite entropy and some $\overline{X}_0\in L^2(\Omega;\R^d)$.
Then there exists a solution to~\eqref{e:min-mean}. 
\end{theorem}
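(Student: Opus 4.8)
The plan is a routine application of the Direct Method, since essentially all of the substantive work has already been carried out in Lemmas~\ref{l:spt-mp},~\ref{l:lemma3.2} and, above all,~\ref{l:lsc-cost}. If $\mathcal{S}(\overline{\mu}_0,\overline{\nu}_0)=\emptyset$, then by the convention recalled in Remark~\ref{r:Snonvuoto} the minimum is $+\infty$ and there is nothing to prove (and in fact the particle-approximation results of Sections~\ref{s:sez4}--\ref{s:limit} show this case never occurs). Assume then $\mathcal{S}(\overline{\mu}_0,\overline{\nu}_0)\neq\emptyset$; since $E\geq 0$, the value $\iota\coloneqq\inf\{E(\mu,\nu,\zeta,g):(\mu,\nu,\zeta,g)\in\mathcal{S}(\overline{\mu}_0,\overline{\nu}_0)\}\in[0,+\infty]$ is well defined, and if $\iota=+\infty$ every admissible quadruple is trivially a minimizer. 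So we may suppose $\iota<+\infty$ and fix a minimizing sequence $(\mu^k,\nu^k,\zeta^k,g^k)\in\mathcal{S}(\overline{\mu}_0,\overline{\nu}_0)$ with $\sup_k E(\mu^k,\nu^k,\zeta^k,g^k)<+\infty$.

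Next I would, for each $k$, choose $f^k\in L^1_{\nu^k}([0,T]\times\R^d;K)$ realizing the minimum that defines $\Phi(\zeta^k,g^k)$ in~\eqref{e:Phi} (the attainment being guaranteed, as noted after~\eqref{e:Phi}, e.g. by taking $f^k=\frac{\di\zeta^k}{\di(g^k(\mu^k_t)\nu^k)}$ where $g^k(\mu^k_t)\neq0$ and $f^k=0$ elsewhere, which is admissible by $(\phi3)$), so that $\zeta^k=f^kg^k(\mu^k_t)\nu^k$ and
$$\int_0^T\int_{\R^d}\phi\big(f^k(t,y),g^k(\mu^k_t)\big)\,\di\nu^k_t(y)\,\di t=\Phi(\zeta^k,g^k)\leq E(\mu^k,\nu^k,\zeta^k,g^k),$$
which yields the uniform bound~\eqref{e:505}. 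I would then apply Lemma~\ref{l:lsc-cost} with the constant sequence $\overline{\nu}_0^k\equiv\overline{\nu}_0$ — whose hypotheses hold because $\overline{\nu}_0\in\mathcal{P}_q(\R^d)$ — to obtain, up to a non-relabeled subsequence, a quadruple $(\mu,\nu,\zeta,g)\in\mathcal{S}(\overline{\mu}_0,\overline{\nu}_0)$ and a field $f\in L^1_\nu([0,T]\times\R^d;K)$ with $\zeta=fg(\mu_t)\nu$ such that $\mu^k\to\mu$ and $\nu^k\to\nu$ in $C([0,T];\mathcal{P}_1(\R^d))$, $\zeta^k\to\zeta$ weakly$^*$ in $\mathcal{M}_b([0,T]\times\R^d;\R^d)$, $g^k\to g$ locally uniformly, and
$$\Phi(\zeta,g)\leq\int_0^T\int_{\R^d}\phi\big(f(t,y),g(\mu_t)\big)\,\di\nu_t(y)\,\di t\leq\liminf_{k\to\infty}\Phi(\zeta^k,g^k).$$
In particular, membership $(\mu,\nu,\zeta,g)\in\mathcal{S}(\overline{\mu}_0,\overline{\nu}_0)$ already encodes that $(\mu,\nu)$ solves the state system~\eqref{e:mean-field}, so the constraint is automatically preserved in the limit.

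It then remains only to pass to the limit in the running cost. Since $\mathcal{L}$ is uniformly continuous and the convergences above hold, $\int_0^T\mathcal{L}(\mu_t,\nu_t)\,\di t=\lim_{k\to\infty}\int_0^T\mathcal{L}(\mu^k_t,\nu^k_t)\,\di t$; combining this with the lower-semicontinuity estimate for $\Phi$ we conclude
$$E(\mu,\nu,\zeta,g)=\int_0^T\mathcal{L}(\mu_t,\nu_t)\,\di t+\Phi(\zeta,g)\leq\liminf_{k\to\infty}\Big(\int_0^T\mathcal{L}(\mu^k_t,\nu^k_t)\,\di t+\Phi(\zeta^k,g^k)\Big)=\iota,$$
so that $(\mu,\nu,\zeta,g)$ is a solution of~\eqref{e:min-mean}.

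As for the difficulty: in this particular statement there is no real obstacle left. The delicate points — the a priori moment and equi-continuity bounds (Lemmas~\ref{l:spt-mp}--\ref{l:lemma3.2}), the Ascoli--Arzel\`a compactness, the stability of the Fokker--Planck equation for $\mu$ under perturbations of $\nu$ (Corollary~\ref{p:continuity}), and, most importantly, the lower semicontinuity of the control cost obtained through the superposition principle and the non-autonomous version of~\cite[Lemma~9.4.3]{AmbGigSav08} — have all been absorbed into Lemma~\ref{l:lsc-cost}. The only genuinely new ingredient needed here is that the term $\int_0^T\mathcal{L}(\mu_t,\nu_t)\,\di t$ passes to the limit, which is immediate from the assumed uniform continuity of $\mathcal{L}$; everything else is the standard closing argument of the Direct Method.
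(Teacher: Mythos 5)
Your proposal is correct and follows essentially the same route as the paper: a Direct Method argument in which the case $\mathcal{S}(\overline{\mu}_0,\overline{\nu}_0)=\emptyset$ is dismissed via Remark~\ref{r:Snonvuoto}, optimal fields $f^k$ realizing $\Phi(\zeta^k,g^k)$ are selected to obtain the uniform bound~\eqref{e:505}, Lemma~\ref{l:lsc-cost} supplies compactness, closedness of the constraint set, and lower semicontinuity of $\Phi$, and the Lagrangian term passes to the limit by uniform continuity of $\mathcal{L}$. The only cosmetic difference is that you justify attainment in~\eqref{e:Phi} by an explicit choice of $f^k$ (zero where $g^k(\mu^k_t)=0$, using $(\phi3)$), whereas the paper invokes Lemma~\ref{l:lsc-cost} for this as well; both are fine.
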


\begin{proof}
We apply the Direct Method. 
In view of Remark~\ref{r:Snonvuoto}, we can assume that $\mathcal{S} (\overline{\mu}_{0}, \overline{\nu}_{0})\neq\emptyset$.
Let $(\mu^{k}, \nu^{k}, \zeta^{k}, g^{k}) \in \mathcal{S} (\overline{\mu}_{0}, \overline{\nu}_{0})$ be a minimizing sequence for~\eqref{e:min-mean}; in particular, we may assume that
\begin{equation}
\label{e:411}
\sup_{k\in\N} \, E(\mu^{k}, \nu^{k}, \zeta^{k}, g^{k}) <+\infty\,.
\end{equation} 
Applying Lemma~\ref{l:lsc-cost} to each~$(\mu^{k}, \nu^{k}, \zeta^{k}, g^{k})$ we deduce that for every $k \in \mathbb{N}$ there exists $f^{k}\in L^{1}_{\nu^{k}} ([0, T]\times  \R^{d}; K)$ such that $\zeta^{k} = f^{k} g^{k}(\mu^{k}_{ t}) \nu^{k}$ and
\begin{align*}
& \Phi (\zeta^{k}, g^{k}) = \int_{0}^{T} \int_{\R^{d}} \phi( f^{k} (t, y) , g^{k} (\mu^{k}_{ t})) \, \di \nu^{k}_{ t} (y) \, \di t\,,
\\
&
 \sup_{k\in\N} \, \Phi(\zeta^{k}, g^{k}) <+\infty\,.
\end{align*}
Again by Lemma~\ref{l:lsc-cost} we have that there exists~$(\mu, \nu, \zeta, g) \in \mathcal{S}(\overline{\mu}_{0}, \overline{\nu}_{0})$ such that $\mu^{k} \to \mu$ and~$\nu^{k} \to \nu$ in $C([0, T]; \mathcal{P}_{1} (\R^{d}))$, $\zeta^{k} \to \zeta$ weakly$^*$ in~$\mathcal{M}_{b} ([0, T]\times \R^{d}; \R^{d})$, and $g^{k} \to g $ locally uniformly in $C(\cP_1(\R^d))$. Furthermore, there exists $f \in L^{1}_{\nu} ([0, T]\times \R^{d}; K)$ such that $\zeta = f g(\mu_{t}) \nu$ and 
\begin{align}
\label{e:411.1}
 \Phi(\zeta, g) & \leq \int_{0}^{T} \int_{\R^{d}} \phi( f (t, y) , g (\mu_{ t})) \, \di \nu_{t} (y) \, \di t 
 \\
 &
 \leq \liminf_{k\to\infty} \int_{0}^{T} \int_{\R^{d}} \phi( f^{k} (t, y) , g^{k} (\mu^{k}_{ t})) \, \di \nu^{k}_{ t} (y) \, \di t = \liminf_{k\to\infty} \,  \Phi(\zeta^{k}, g^{k})\,. \nonumber
 \end{align}
 By the continuity properties of the Lagrangian~$\mathcal{L}$ we have that
 \begin{equation}
 \label{e:lag-cont}
\int_{0}^{T}  \mathcal{L} (\mu_{t}, \nu_{t}) \,  \di t = \lim_{k\to\infty} \int_{0}^{T} \mathcal{L} (\mu^{k}_{ t}, \nu^{k}_{ t}) \, \di t \,.
 \end{equation}
Finally, from~\eqref{e:411.1} and~\eqref{e:lag-cont} we conclude that
\begin{displaymath}
E(\mu, \nu, \zeta, g) \leq \liminf_{k\to\infty} \, E(\mu^{k}, \nu^{k}, \zeta^{k}, g^{k}) 
\end{displaymath}
Hence, $(\mu, \nu, \zeta, g)$ is a solution to~\eqref{e:min-mean}.
\end{proof}

\section{Finite-particle control problems}\label{s:sez4}

In this section, we are going to present a finite-particle approximation of~\eqref{e:min-mean}. 
In Section~\ref{s:particles}, we discuss the setting for the stochastic evolution of two groups of agents, one which is directly subject to additive noise and one which is controlled. 
Although the controls are deterministic, the coupling of the dynamics of the two populations induces stochasticity also in the evolution of the controlled agents.
In Section~\ref{s:chaos}, we prove propagation of chaos, yielding a deterministic controlled equation.

The convergence of the finite-particle problem to Problem 2 (see~\eqref{e:min-mean}) will be left to Section~\ref{s:limit}, where it is studied in terms of $\Gamma$-convergence (see Theorem~\ref{t:gamma-lim} and Corollary~\ref{c:minima} below).

\subsection{Particles system approximation}
\label{s:particles}

Let us fix $M, m \in \mathbb{N}$. Given the initial conditions~$\overline{\bX}_{0} = (\overline{X}_{0, 1}, \ldots, \overline{X}_{0, M}) \in (L^{p} (\Om; \R^{d}))^{M}$ and $\overline{\y}_{0}:= (\overline{y}_{0, 1}, \ldots, \overline{y}_{0, m}) \in (\R^{d})^{m}$, the controls $\uu\coloneqq (u_1,\ldots,u_m)\in L^{1} ([0, T]; K^m)$, and $g \in \mathcal{G}$, we consider the finite-particle system
\begin{equation}
\label{e:finite-particles}
\left\{
\begin{array}{llll}
\di X_{i}(t) = v_{\mu^{M, m}_{t}, \nu^{M, m}_{t}} (X_{i}(t)) \, \di t + \sqrt{2\sigma}\, \di  W(t)\,, \qquad \text{for $i = 1, \ldots, M$,}\\[2mm] 
\dot{y}_{j} (t) = w_{\mu^{M, m}_{t}, \nu^{M, m}_{t}} (y_{j} (t)) + u_{j} (t) g (\mu^{M, m}_{t}) \,, \qquad \text{for $j = 1, \ldots, m$,}\\[3mm]
X_{i} (0) = \overline{X}_{0, i}, \, y_{j} (0) = \overline{y}_{0, j}\,,\\[1mm]
\displaystyle \mu^{M, m}_{t} := \frac{1}{M} \sum_{i=1}^{M} \delta_{X_{i}(t)}\,, \ \nu^{M, m}_{t} := \frac{1}{m} \sum_{j=1}^{m} \delta_{y_{j} (t)}\,.
\end{array}
\right.
\end{equation}
Notice that, for fixed $\uu$ and~$g$, the existence of a unique pathwise solution $(\bX,\y)$ to~\eqref{e:finite-particles} is a standard result in SDE theory, under the assumption that both~$v$ and~$w$ satisfy~\eqref{e:vLip}. The notion of pathwise solution to~\eqref{e:finite-particles} is given analogously to Definition~\ref{d:pathwise}, with the obvious modifications.

The cost functional associated with~\eqref{e:finite-particles} is defined by
\begin{align}
\label{e:cost}
\E  (\bX, \y, \uu, g ) & := \mathbb{E} \bigg( \int_{0}^{T} \mathcal{L} ( \mu^{M, m}_{t}, \nu^{M, m}_{t} ) \, \di t + \frac{1}{m} \sum_{j=1}^{m} \int_{0}^{t} \phi\big( u_{j}, g ( \mu^{M, m}_{t})\big) \, \di t  \bigg) \,.
\end{align}
In the next proposition we state the existence of the finite-particle optimal control problem
\begin{align}
\label{e:min-finite}
\min\,  \big\{ \E(\bX, \y, \uu, g) : \ & (\bX, \y) \text{ solves~\eqref{e:finite-particles} with initial condition~$(\overline{\bX}_{0}, \overline\y_{0})$}
\\
&
\text{and controls $\uu \in L^{1} ([0, T]; K^{m}), \, g \in \G$} \big\}\,. \nonumber
\end{align}

\begin{proposition}
\label{p:finite-optimal}
Let $M, m \in \mathbb{N}$ and $(\overline{\bX}_{0}, \overline{\y}_{0} ) \in (L^{p} (\Om; \R^{d}))^{M} \times (\R^{d})^{m}$ be fixed. 
Then there exists a solution to~\eqref{e:min-finite}.
\end{proposition}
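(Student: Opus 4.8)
The plan is to apply the Direct Method to the finite-particle minimization problem~\eqref{e:min-finite}, working at fixed $M$ and $m$. First I would take a minimizing sequence $(\bX^{k}, \y^{k}, \uu^{k}, g^{k})$, where each $(\bX^{k}, \y^{k})$ is the unique pathwise solution to~\eqref{e:finite-particles} driven by the controls $\uu^{k}\in L^{1}([0,T];K^{m})$ and $g^{k}\in\G$, with the fixed initial datum $(\overline{\bX}_{0},\overline{\y}_{0})$. Since the problem is nontrivial only if the infimum is finite, I may assume $\sup_{k}\E(\bX^{k},\y^{k},\uu^{k},g^{k})<+\infty$.

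The compactness step handles the controls. The fields $\uu^{k}$ take values in the compact convex set $K^{m}$, hence are bounded in $L^{1}([0,T];K^{m})$ and, being uniformly bounded pointwise, are bounded in every $L^{p}([0,T];K^{m})$; by the superlinear growth of $\phi(\cdot,\xi)$ in its first argument (assumption~$(\phi2)$) together with the uniform energy bound, the sequence $\{u_{j}^{k}\}_{k}$ is equi-integrable for each $j$ (De la Vall\'ee-Poussin), so up to a subsequence $\uu^{k}\rightharpoonup\uu$ weakly in $L^{1}([0,T];\R^{dm})$ with $\uu(t)\in K^{m}$ for a.e.~$t$ because $K^{m}$ is closed and convex (Mazur). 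The sequence $g^{k}\in\G$ is equi-Lipschitz and equi-bounded on $\mathcal{P}_{1}(\R^{d})$, so by Ascoli--Arzel\`a (applied on compact subsets, exactly as in the proof of Lemma~\ref{l:lsc-cost}) a subsequence converges locally uniformly to some $g\in\G$.

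The convergence of the state is the next step. Using the a priori moment and Lipschitz bounds from Theorem~\ref{t:SDE} and Lemma~\ref{l:spt-mp} (or a direct Gr\"onwall estimate on~\eqref{e:finite-particles}, noting that the empirical measures $\mu^{M,m}_{t},\nu^{M,m}_{t}$ inherit the needed moment bounds), I would show that the trajectories $\bX^{k}$ are bounded in $\mathcal{M}(\Om; C([0,T];(\R^{d})^{M}))$ with a uniform modulus of continuity in expectation, and the deterministic trajectories $\y^{k}$ (at fixed $\omega$) are equi-Lipschitz; passing to a subsequence one obtains $\bX^{k}(t)\to\bX(t)$ and $\y^{k}(t)\to\y(t)$ in the appropriate senses, hence $\mu^{M,m,k}_{t}\to\mu^{M,m}_{t}$ and $\nu^{M,m,k}_{t}\to\nu^{M,m}_{t}$ in $\mathcal{P}_{1}(\R^{d})$. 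Passing to the limit in the integral formulations of~\eqref{e:finite-particles} — using~\eqref{e:vLip} to control the drift terms, the weak $L^{1}$ convergence $\uu^{k}\rightharpoonup\uu$ together with the strong (local uniform) convergence $g^{k}(\mu^{M,m,k}_{t})\to g(\mu^{M,m}_{t})$ to pass to the limit in the product $u_{j}^{k}(t)g^{k}(\mu^{M,m,k}_{t})$, and an It\^o-isometry argument for the stochastic integral (the Brownian motion $W$ is fixed) — shows $(\bX,\y)$ solves~\eqref{e:finite-particles} with controls $(\uu,g)$.

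Finally I would pass to the limit in the cost~\eqref{e:cost}. The Lagrangian term $\int_{0}^{T}\mathcal{L}(\mu^{M,m,k}_{t},\nu^{M,m,k}_{t})\,\di t$ converges by the uniform continuity of $\mathcal{L}$ and dominated convergence (the integrand is bounded thanks to the moment estimates). For the control term $\frac{1}{m}\sum_{j}\int_{0}^{T}\phi(u_{j}^{k},g^{k}(\mu^{M,m,k}_{t}))\,\di t$ I would invoke lower semicontinuity of integral functionals with a convex integrand in the weakly-converging variable: since $\phi(\cdot,\xi)$ is convex and continuous and $g^{k}(\mu^{M,m,k}_{t})\to g(\mu^{M,m}_{t})$ uniformly in $t$, a standard lower semicontinuity theorem (Ioffe, or the non-autonomous version of \cite[Lemma~9.4.3]{AmbGigSav08} used in Lemma~\ref{l:lsc-cost}) gives
\begin{displaymath}
\int_{0}^{T}\phi(u_{j}(t),g(\mu^{M,m}_{t}))\,\di t \leq \liminf_{k\to\infty}\int_{0}^{T}\phi(u_{j}^{k}(t),g^{k}(\mu^{M,m,k}_{t}))\,\di t,
\end{displaymath}
and taking expectations (Fatou) yields $\E(\bX,\y,\uu,g)\leq\liminf_{k}\E(\bX^{k},\y^{k},\uu^{k},g^{k})$, so $(\bX,\y,\uu,g)$ is optimal. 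The main obstacle is the last step: one must pass to the limit in a term that is nonlinear (through $g^{k}$) in a state variable converging only strongly and convex in a control variable converging only weakly; the remedy is precisely the discretize-in-time / freeze-$g$ trick of Lemma~\ref{l:lsc-cost}, reducing to the convex lower semicontinuity theorem on each subinterval, combined with the equi-uniform continuity of $t\mapsto\mu^{M,m,k}_{t}$ to control the error from freezing $g$.
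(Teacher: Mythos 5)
Your overall architecture (Direct Method; weak$^*$ compactness of $\uu^{k}$ from the boundedness of~$K$; Ascoli--Arzel\`a for~$g^{k}$; convergence of the Lagrangian term via the modulus of continuity; lower semicontinuity of the control term by freezing~$g$ at the limit and using convexity of~$\phi(\cdot,\xi)$ under weak convergence of~$\uu^{k}$) matches the paper's proof, and those steps are sound --- the De la Vall\'ee-Poussin detour is unnecessary since the controls already take values in the compact set~$K^{m}$, and no time discretization is needed for the control term because the limit $g(\mu_{t})$ is simply a fixed bounded function of~$t$ (the paper just splits the integral into $I_{k}+II_{k}$ and sends $II_{k}\to0$ via the modulus of continuity of~$\phi$).

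The genuine gap is in your state-convergence step. You propose to extract a convergent subsequence of the trajectories $(\bX^{k},\y^{k})$ from equicontinuity/moment bounds and then identify the limit equation. For the stochastic components this does not go through as stated: the equicontinuity of $t\mapsto X_{i}^{k}(t,\omega)-\sqrt{2\sigma}\,W(t,\omega)$ holds only with an $\omega$-dependent constant, so Ascoli--Arzel\`a produces a subsequence that depends on~$\omega$, and there is no measurable way to select a single subsequence along which $\bX^{k}\to\bX$ pathwise; convergence in law (tightness plus Skorokhod) would change the probability space and is incompatible with identifying the limit as a pathwise solution driven by the \emph{same} Brownian motion~$W$. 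The paper avoids this entirely: it \emph{defines} $(\bX,\y)$ as the (already known to exist and be unique) pathwise solution of~\eqref{e:finite-particles} with the limit controls $(\uu,g)$, and then proves the quantitative stability estimate of Lemma~\ref{l:finite-particles}, namely
\begin{displaymath}
\mathbb{E}\bigg(\max_{i,t}|X^{k}_{i}(t)-X_{i}(t)|+\max_{j,t}|y^{k}_{j}(t)-y_{j}(t)|\bigg)\longrightarrow 0,
\end{displaymath}
by a Gr\"onwall argument that reduces everything to the remainders $\mathcal{R}^{k}_{j}$ in~\eqref{e:Rkj}, which vanish precisely because of the weak$^*$ convergence of~$\uu^{k}$ and the locally uniform convergence of~$g^{k}$. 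Replacing your compactness step by this ``solve with the limit controls, then prove stability'' argument repairs the proof; the rest of your proposal then carries through as in the paper.
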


The proof of Proposition~\ref{p:finite-optimal} relies on the following convergence result.

\begin{lemma}
\label{l:finite-particles}
Let $M, m \in \mathbb{N}$ and $(\overline{\bX}_{0}, \overline{\y}_{0} ) \in (L^{p} (\Om; \R^{d}))^{M} \times (\R^{d})^{m}$ be fixed. Let $\uu^{k}, \uu \in L^{1}([0, T]; K^{m})$ and $g^{k}, g \in \mathcal{G}$ be such that $\uu^{k} \rightharpoonup \uu$ weakly$^*$ in $L^{\infty} ([0, T] ; (\R^{d})^{m})$ and $g^{k} \to g$ locally uniformly in~$C(\mathcal{P}_{1} (\R^{d}))$. Moreover, let $(\bX^{k}, \y^{k})$ and $(\bX, \y)$ be the corresponding solutions to~\eqref{e:finite-particles}. Then, 
\begin{equation}
\label{e:conv}
\lim_{k \to \infty} \, \mathbb{E} \bigg( \max_{\substack{i=1, \ldots, M, \\ t \in [0, T]}} | X^{k}_{i} (t) - X_{i} (t) | + \max_{\substack{ j=1, \ldots, m, \\ t \in [0, T]}} | y_{j}^{k} (t) - y_{j} (t) | \bigg) = 0\,.
\end{equation}
\end{lemma}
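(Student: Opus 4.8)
The plan is to prove the convergence \eqref{e:conv} by a Gr\"onwall-type argument applied pathwise (i.e.\ for $\mathbb{P}$-a.e.\ $\omega$), combined with the weak$^*$ convergence of the controls and the local uniform convergence of $g^k$. Writing the integral form of \eqref{e:finite-particles} for both $(\bX^k,\y^k)$ and $(\bX,\y)$, the noise terms $\sqrt{2\sigma}W(t)$ cancel, so for every $i$ and every $t$ one gets
\begin{align*}
|X_i^k(t)-X_i(t)| \leq \int_0^t \big| v_{\mu^k_\tau,\nu^k_\tau}(X_i^k(\tau)) - v_{\mu_\tau,\nu_\tau}(X_i(\tau))\big|\,\di\tau,
\end{align*}
and similarly for the $y_j$-components one has the extra terms coming from $u_j^k g^k(\mu^k_\tau)-u_j g(\mu_\tau)$. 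Here $\mu^k_\tau,\nu^k_\tau$ denote the empirical measures built from $(\bX^k,\y^k)$, and $\mu_\tau,\nu_\tau$ those built from $(\bX,\y)$. Using \eqref{e:vLip} and the elementary bound $\cW_1(\mu^k_\tau,\mu_\tau)\leq \frac1M\sum_i |X_i^k(\tau)-X_i(\tau)|$ for empirical measures with equal weights (and likewise for $\nu$), every right-hand side is controlled by $L_v\int_0^t\!\big(\max_i|X_i^k(\tau)-X_i(\tau)|+\max_j|y_j^k(\tau)-y_j(\tau)|\big)\di\tau$ plus control-dependent remainders.

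The key step is to isolate and handle the control remainder. Introduce the auxiliary trajectories $(\widehat{\bX}^k,\widehat{\y}^k)$ solving \eqref{e:finite-particles} with the \emph{old} controls $\uu,g$ but so that the comparison can be split: more simply, one bounds
\begin{align*}
\big| u_j^k(\tau)g^k(\mu^k_\tau) - u_j(\tau)g(\mu_\tau)\big| \leq \Delta\,|u_j^k(\tau)-u_j(\tau)|\cdot\mathbf{1} + |u_j^k(\tau)|\,|g^k(\mu^k_\tau)-g(\mu_\tau)|,
\end{align*}
where the second term is handled by $|g^k(\mu^k_\tau)-g(\mu_\tau)|\le |g^k(\mu^k_\tau)-g(\mu^k_\tau)| + \Lambda\cW_1(\mu^k_\tau,\mu_\tau)$ — the first piece tends to $0$ uniformly by local uniform convergence of $g^k$ (the $\mu^k_\tau$ live in a compact subset of $\cP_1$ by the moment bounds), the second folds back into the Gr\"onwall term. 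The genuinely delicate piece is the term $\int_0^T |u_j^k(\tau)-u_j(\tau)|\,\di\tau$, which does \emph{not} go to zero pointwise; here one exploits that $\uu^k\rightharpoonup\uu$ weakly$^*$ only after integration against the slowly-varying quantity $g(\mu_\tau)$ along the \emph{limit} trajectory. The clean way is to define $\widehat{y}_j^k$ as the solution of the ODE driven by $w_{\mu^k,\nu^k}(\widehat y_j^k) + u_j^k g(\mu^k_{\cdot})$ but with $g$ in place of $g^k$; then $\max|y_j^k-\widehat y_j^k|\to 0$ by the uniform smallness of $g^k-g$, and $\max|\widehat y_j^k - y_j|\to 0$ because the difference solves a linear-in-difference ODE whose forcing is $\int_0^t(u_j^k(\tau)-u_j(\tau))g(\mu_\tau)\,\di\tau$ up to Gr\"onwall-absorbable terms, and this integral tends to $0$ by weak$^*$ convergence since $\tau\mapsto g(\mu_\tau)$ is continuous, hence in $L^1$.

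Once the control remainder $R^k(t):=\sup_{s\le t}\big|\int_0^s(\uu^k-\uu)(\tau)g(\mu_\tau)\,\di\tau\big|$ is shown to vanish (uniformly in $t\in[0,T]$, by weak$^*$ convergence), set $\delta^k(t):=\max_{i,s\le t}|X_i^k(s)-X_i(s)| + \max_{j,s\le t}|y_j^k(s)-y_j(s)|$. The estimates above yield, for $\mathbb{P}$-a.e.\ $\omega$,
\begin{align*}
\delta^k(t) \leq C\int_0^t \delta^k(\tau)\,\di\tau + \varepsilon_k,
\end{align*}
with $C=C(L_v,L_w,\Lambda,\Delta,r)$ depending only on the data and the moment bound $r$ from Lemma~\ref{l:spt-mp}, and $\varepsilon_k\to 0$ deterministically (a combination of $R^k$ and $\sup_k\|g^k-g\|_{C(\text{compact})}$). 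Gr\"onwall's lemma then gives $\delta^k(t)\le \varepsilon_k e^{CT}$ pointwise in $\omega$; taking expectations and using that $\varepsilon_k$ is non-random (or, if a random bound is retained, dominated convergence with the uniform moment bounds of Theorem~\ref{t:SDE}) yields \eqref{e:conv}. I expect the main obstacle to be exactly the passage with the weak$^*$ convergence: one must be careful to test $\uu^k-\uu$ against the limiting continuous function $g(\mu_\tau)$ rather than the $k$-dependent $g^k(\mu^k_\tau)$, which is why the two-step comparison through the auxiliary trajectories $\widehat{\y}^k$ is needed; everything else is a routine Gr\"onwall estimate.
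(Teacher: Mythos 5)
Your proposal is correct and follows essentially the same route as the paper: a pathwise Gr\"onwall estimate in which the control discrepancy is decomposed so that $\uu^k-\uu$ is tested against the fixed continuous function $\tau\mapsto g(\mu_\tau)$ along the limit trajectory (vanishing by weak$^*$ convergence, uniformly in $t$ by equi-Lipschitz continuity of the primitives), the $g^k(\mu^k_\tau)$ versus $g(\mu_\tau)$ mismatch is split into a Lipschitz-in-$\cW_1$ piece absorbed by Gr\"onwall plus a piece controlled by local uniform convergence of $g^k$, and the conclusion in expectation follows by dominated convergence. The only difference is cosmetic: the paper performs the add-and-subtract directly inside the integral estimate (defining the remainder $\mathcal{R}^k_j$) rather than passing through your auxiliary trajectories $\widehat{\y}^k$, and it evaluates $g^k-g$ at the limit measures $\mu_\tau$ only, which slightly simplifies the compactness argument.
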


\begin{proof}
Along the proof we denote by~$C$ a generic positive constant depending on~$v$,~$w$,~$T$,~$K$, $\Lambda$, $\Delta$,~$\overline{\bX}_{0}$, and~$\overline{\y}_{0}$, but not on~$k$.
Since~$M$ and~$m$ are fixed, we will drop them to keep the notation lighter.

For $\omega \in \Om$, $t \in [0, T]$, and $j = 1, \ldots, m$, we estimate by \eqref{e:vLip} and by definition of~$\G$ and~$K$
\begin{align}
\label{e:105}
| y^{k}_{j} (t) - y_{j} (t) | &  \leq \int_{0}^{t} | w_{\mu^{k}_{\tau}, \nu^{k}_{\tau}} (y^{k}_{j} (\tau)) - w_{\mu_{\tau}, \nu_{\tau}} (y_{j}(\tau)) | \, \di \tau 
\\
& 
\qquad + \bigg| \int_{0}^{t} \big ( u^{k}_{j} (\tau) g^{k} (\mu^{k}_{\tau}) - u_{j} (\tau) g (\mu_{\tau}) \big) \, \di \tau\bigg| \nonumber
\\
&
\leq L_{w} \int_{0}^{t}  \big( \cW_{1} ( \mu^{k}_{\tau}, \mu_{\tau}) + \cW_{1} (\nu^{k}_{\tau}, \nu_{\tau}) + | y^{k}_{j} (\tau) - y_{j} (\tau) | \big) \di \tau \nonumber
\\
&
\qquad + \bigg| \int_{0}^{t}  ( u^{k}_{j} (\tau) - u_{j} (\tau) ) \,g (\mu_{\tau})  \, \di \tau\bigg|  + \bigg| \int_{0}^{t}  u^{k}_{j} (\tau) \big( g^{k} (\mu^{k}_{\tau}) - g^{k} (\mu_{\tau}) \big)  \di \tau\bigg| \nonumber
\\
&
\qquad +  \bigg| \int_{0}^{t}  u^{k}_{j} (\tau) \big( g^{k} (\mu_{\tau}) - g (\mu_{\tau}) \big)  \di \tau\bigg| \nonumber
\\
&
\leq  C\int_{0}^{t}   \cW_{1} ( \mu^{k}_{\tau}, \mu_{\tau})\, \di \tau  + C \int_{0}^{t} \max_{j = 1 , \ldots, m} | y^{k}_{j} (\tau) - y_{j} (\tau)|  \di \tau + \mathcal{R}^{k} _{j} (t) \nonumber\,,
\end{align}
where we have set
\begin{equation}\label{e:Rkj}
\mathcal{R}^{k}_{j}(t)  \coloneqq \bigg| \int_{0}^{t}  ( u^{k}_{j} (\tau) - u_{j} (\tau) ) \,g (\mu_{\tau})  \, \di \tau\bigg|  + \bigg| \int_{0}^{t}  u^{k}_{j} (\tau) \big( g^{k} (\mu_{\tau}) - g (\mu_{\tau}) \big)  \di \tau\bigg|\,.
\end{equation}
Taking the maximum over $j = 1, \ldots, m$ in~\eqref{e:105}  and applying Gr\"onwall inequality, we get that
\begin{align}
\label{e:106}
\max_{ j=1, \ldots, m} | y^{k}_{j} (t) - y_{j} (t) | &  \leq C \Big(  \int_{0}^{t}   \cW_{1} ( \mu^{k}_{\tau}, \mu_{\tau})\, \di \tau + \sum_{j=1}^{m} \mathcal{R}^{k}_{j} (t) \Big) 
\\
&
\leq \nonumber C \Big( \int_{0}^{t}   \max_{i=1, \ldots, M} | X^{k}_{i} (\tau) - X_{i} (\tau)| \, \di \tau + \sum_{j=1}^{m} \mathcal{R}^{k}_{j} (t) \Big) \,.
\end{align}

In a similar way, we now estimate $|X^{k}_{i} (t) - X_{i} (t) |$:
\begin{align}
\label{e:107}
|X^{k}_{i} (t) - X_{i} (t) | & \leq L_{v} \int_{0}^{t} \big( \cW_{1} (\mu^{k}_{\tau}, \mu_{\tau}) + \cW_{1} (\nu^{k}_{\tau}, \nu_{\tau}) + | X^{k}_{i} (\tau) - X_{i} (\tau)|\big) \di \tau
\\
&
\leq  2L_{v} \int_{0}^{t} \Big( \max_{i=1, \ldots, M} | X^{k}_{i} (\tau) - X_{i} (\tau)| + \max_{j=1, \ldots, m} | y^{k}_{j} (\tau) - y_{j} (\tau) | \Big) \di \tau \,.\nonumber
\end{align}
Inserting~\eqref{e:106} into~\eqref{e:107} and taking the maximum over $i=1, \ldots, M$ we obtain
\begin{align}
\label{e:108}
\max_{i=1, \ldots, M} |X^{k}_{i} (t) - X_{i} (t) | & \leq C \bigg(  \int_{0}^{t}  \max_{i=1, \ldots, M} | X^{k}_{i} (\tau) - X_{i} (\tau)|\, \di \tau 
\\
&
\qquad  +   \int_{0}^{t} \int_{0}^{\tau} \max_{i=1, \ldots, M}   | X^{k}_{i} (s) - X_{i} (s)| \, \di s\, \di \tau +   \sum_{j=1}^{m} \int_{0}^{t}   \mathcal{R}^{k}_{j} (\tau)\, \di \tau \bigg)  \nonumber
\\
&
\leq  C  \int_{0}^{t} \max_{i=1, \ldots, M} | X^{k}_{i} (\tau) - X_{i} (\tau)|\, \di \tau +C \sum_{j=1}^{m} \int_{0}^{t}  \mathcal{R}^{k}_{j} (\tau)\, \di \tau\,. \nonumber
\end{align}
By Gr\"onwall inequality, we deduce from~\eqref{e:108} that
\begin{align}
\label{e:109}
\max_{\substack{ i=1, \ldots, M, \\ t \in [0, T]}} |X^{k}_{i} (t) - X_{i} (t) | & \leq C \sum_{j=1}^{m} \int_{0}^{T} \mathcal{R}^{k}_{j} (\tau) \, \di \tau\,.
\end{align}
We notice that $\mathcal{R}^{k}_{j} (t) \to 0$ as $k \to \infty$ for $\mathbb{P}$-a.e.~$\omega \in \Om$ and a.e.~$t \in [0, T]$.
Indeed, $u^{k}_{j}$ converges weakly$^*$ to $u_{j}$ for $j = 1, \ldots, m$, $g (\mu_{t})$ is an $L^{\infty}$-function, and $g^{k}(\mu_\tau)$ converges uniformly to $g(\mu_\tau)$. Moreover, each $\mathcal{R}^{k}_{j}$ is equi-Lipschitz continuous in~$[0, T]$, since $g_{k}$ is $\Delta$-bounded and $u^{k}_{j}, u_{j}$ are uniformly bounded. By Ascoli-Arzel\'a Theorem, this implies that 
$\mathcal{R}^{k}_{j} \to 0$ uniformly in~$[0, T]$ for $\mathbb{P}$-a.e.~$\omega \in \Om$. By dominated convergence, we further infer that
\begin{equation}
\label{e:110}
\lim_{k \to \infty} \, \mathbb{E} \bigg( \max_{t \in [0, T] } \sum_{j=1}^{m} \mathcal{R}^{k}_{j} (t) \bigg) =0\,.
\end{equation}

Combining~\eqref{e:109} and~\eqref{e:110} we get that
\begin{equation}
\label{e:111}
\lim_{k \to \infty} \, \mathbb{E} \bigg( \max_{\substack{ i=1, \ldots, M, \\ t \in [0, T]}} | X^{k}_{i} (t) - X_{i} (t) | \bigg) = 0\,.
\end{equation}
Finally, combining~\eqref{e:106},~\eqref{e:110}, and~\eqref{e:111} we infer that
\begin{displaymath}
\lim_{k\to\infty} \, \mathbb{E} \bigg( \max_{\substack{ j=1, \ldots, m, \\ t \in [0, T]}} | y^{k}_{j} (t) - y_{j} (t) | \bigg) = 0\,.
\end{displaymath}
This concludes the proof of~\eqref{e:conv} and of the lemma.
\end{proof}

We are now in a position to prove Proposition~\ref{p:finite-optimal}.

\begin{proof}[Proof of Proposition~\ref{p:finite-optimal}]
We proceed by the Direct Method. Since $M, m \in \mathbb{N}$ are fixed, we drop them in our notation. Let $(\uu^{k}, g^{k}) \in L^{1}([0, T]; K^{m}) \times \G$ with corresponding solutions $(\bX^{k}, \y^{k})$ to~\eqref{e:finite-particles} be a minimizing sequence for~\eqref{e:min-finite}. By definition of~$K$ and~$\G$, there exists $(\uu, g) \in L^{1}([0, T]; K^{m}) \times \G$ such that, up to a subsequence, $\uu^{k} \rightharpoonup \uu$ weakly$^*$ in $L^{\infty} ([0, T]; (\R^{d})^{m})$ and $g^{k} \to g$ locally uniformly in $C(\mathcal{P}_{1} (\R^{d}))$. Let $(\bX, \y)$ be the solution to~\eqref{e:finite-particles} with controls~$(\uu, g)$ and initial datum $(\overline{\bX}_{0}, \overline{\y}_{0})$. In view of Lemma~\ref{l:finite-particles} we have that, along the same subsequence,
\begin{equation}
\label{e:305}
\lim_{k \to \infty} \, \mathbb{E} \bigg(  \max_{\substack{ i=1, \ldots, M\\ t \in [0, T]}} | X_{i}^{k} (t) - X_{i}(t)| + \max_{\substack{j=1, \ldots, m,\\ t \in [0, T]}} | y^{k}_{ j} (t) - y_{j} (t)|\bigg) = 0\,.
\end{equation}
In particular,~\eqref{e:305} implies that $\cW_{1} (\mu^{k}_{ t}, \mu_{t}) \to 0$ and $\cW_{1} (\nu^{k}_{t}, \nu_{t}) \to 0$ uniformly in~$t \in [0, T]$ as $k \to \infty$.

We now prove the lower-semicontinuity of the cost. Let us denote by~$\omega_{\mathcal{L}}$ a concave modulus of continuity of~$\mathcal{L}$. Then, by Jensen we estimate
\begin{align*}
\label{e:306}
\bigg| \mathbb{E} \bigg(  \int_{0}^{T}  \mathcal{L} (\mu^{k}_{ t}, \nu^{k}_{t}) \, \di t -  \int_{0}^{T} \mathcal{L} (\mu_{t}, \nu_{t}) \, \di t  \bigg) \bigg|
&\leq \int_{0}^{T} \mathbb{E} \Big(  \omega_{\mathcal{L}} \big( \cW_{1} (\mu^{k}_{ t}, \mu_{t}) + \cW_{1} (\nu^{k}_{t}, \nu_{t})\big) \Big) \, \di t 
\\
&
\leq \int_{0}^{T}  \omega_{\mathcal{L}} \Big( \mathbb{E} \Big(  \cW_{1} (\mu^{k}_{ t}, \mu_{t}) + \cW_{1} (\nu^{k}_{t}, \nu_{t})\Big) \Big) \, \di t  \,.  \nonumber
\end{align*} 
In view of~\eqref{e:305} we have that
\begin{equation}
\label{e:307}
\lim_{k\to \infty} \, \int_{0}^{T}  \mathcal{L} (\mu^{k}_{t}, \nu^{k}_{t}) \, \di t = \int_{0}^{T} \mathcal{L} (\mu_{t}, \nu_{t}) \, \di t \,.
\end{equation}

As for the control part of the functional~$\mathcal{E}$, we simply rewrite
\begin{align}
\label{e:308}
\mathbb{E} \bigg( \frac{1}{m}\sum_{j=1}^{m}&  \int_{0}^{T} \phi\big(u^{k}_{ j} (t) , g^{k} (\mu^{k}_{t})  \big) \, \di t \bigg) 
 = \mathbb{E} \bigg( \frac{1}{m} \sum_{j=1}^{m} \int_{0}^{T} \phi\big(u^{k}_{ j} (t) , g (\mu_{t})  \big) \, \di t  \bigg)
 \\
 &
 \qquad + \mathbb{E} \bigg( \frac{1}{m} \sum_{j=1}^{m} \int_{0}^{T} \Big( \phi\big(u^{k}_{ j} (t) , g^{k} (\mu^{k}_{ t})  \big) -\phi\big(u^{k}_{ j} (t) , g (\mu_{t})  \big) \Big)  \, \di t \bigg)  \nonumber
\\
&
=: I_{k} + II_{k} \,. \nonumber
\end{align}
By~$(\phi2)$ and the weak$^*$ convergence of~$\uu^{k}$ to~$\uu$ we have that
\begin{equation}
\label{e:309}
\liminf_{k\to \infty} \, I_{k} \geq \mathbb{E} \bigg( \frac{1}{m} \sum_{j=1}^{m} \int_{0}^{T} \phi\big(u_{j} (t) , g (\mu_{t})  \big) \, \di t \bigg)  \,.
\end{equation}
Since $\uu^{k}_{t} \in K^{m}$ for a.e.~$t \in [0, T]$, for every $k \in \mathbb{N}$, $g^{k}, g$ are $\Delta$-bounded, and $\phi$ is continuous (see $(\phi1)$), denoting by~$\omega_{\phi}$ a concave modulus of continuity of~$\phi$ in the compact set $K \times [-\Delta, \Delta]$ we further have that
\begin{align*}
|II_{k}| &  \leq \int_{0}^{T} \mathbb{E} \bigg( \frac{1}{m} \sum_{j=1}^{m} \omega_{\phi} \big( | g^{k} (\mu^{k}_{t}) - g (\mu_{t}) | \big) \bigg) \, \di t
\leq \int_{0}^{T}  \frac{1}{m} \sum_{j=1}^{m} \mathbb{E} \bigg( \omega_{\phi} \big( \Lambda \cW_{1} (\mu^{k}_{ t} , \mu_{t} )  \big) \bigg)  \, \di t
\\
&
\leq  \int_{0}^{T}  \frac{1}{m} \sum_{j=1}^{m}  \omega_{\phi} \Big( \Lambda \mathbb{E} \big(\cW_{1} (\mu^{k}_{ t} , \mu_{t} )  \big) \Big)  \, \di t\,.
\end{align*}
By the uniform convergence $\mathbb{E} \big(\cW_{1} (\mu^{k}_{ t} , \mu_{t} )  \big)\to 0$ in~$[0, T]$, we infer that
\begin{equation}
\label{e:310}
\lim_{k\to \infty} \, II_{k} = 0\,.
\end{equation}
Thus, combining~\eqref{e:308}--\eqref{e:310} we finally obtain
\begin{displaymath}
\liminf_{k\to \infty} \, \mathcal{E}(\bX^{k}, \y^{k}, \uu^{k}, g^{k}) \geq \mathcal{E}(\bX, \y, \uu, g)\,.
\end{displaymath}
This concludes the proof of existence of solutions to~\eqref{e:min-finite}.
\end{proof}

\subsection{Auxiliary estimates}
\label{s:chaos}

This section is devoted to an intermediate step towards the mean-field limit of problem~\eqref{e:min-finite}. Namely, we analyze here the propagation of chaos for system~\eqref{e:system} and the corresponding convergence of the cost functional~$\mathcal{E}$.

Let us fix $\overline{\y}_{0} = (\overline{y}_{0, 1}, \ldots, \overline{y}_{0, m}) \in (\R^{d})^{m}$, $\overline{X}_{0}  \in L^{p} (\Om; \R^{d})$, $\uu \in L^{1}([0, T]; K^{m})$, and $g \in \G$ and consider the system
\begin{equation}
\label{e:chaos}
\left\{
\begin{array}{llll}
\di X(t) = v_{\mu^{m}_{t}, \nu^{m}_{t}} (X(t)) \, \di t + \sqrt{2\sigma} \, \di W(t) \,,\\[2mm]
\dot{y}_{j} (t) = w_{\mu^{m}_{t}, \nu^{m}_{t}} (y_{j} (t)) + u_{j} (t) g (\mu^{m}_{t}) \,, \qquad j = 1, \ldots, m\,,\\[2mm]
X(0) = \overline{X}_{0}\,, \ y_{j} (0) = \overline{y}_{0, j}\,,\\[2mm]
\displaystyle \bmu^{m} = {\rm Law} (X) \,, \ \mu^{m}_{t} = ({\rm ev}_{t}) _{\#} \bmu^{m}\,,\ \nu^{m}_{t} = \frac{1}{m} \sum_{j=1}^{m} \delta_{y_{j} (t)}\,.
\end{array}
\right.
\end{equation}
To simplify the notation, we further set $\overline{\mu}_{0} \coloneqq {\rm Law} (\overline{X}_{0})$ and $\overline{\nu}_{0}^{m} \coloneq \frac{1}{m} \sum_{j=1}^{m} \delta_{\overline{y}_{0, j}}$. 
Associated with system~\eqref{e:chaos}, we introduce the cost functional
\begin{equation}
\label{e:cost-chaos}
\mathfrak{E}( X, \y, \uu, g) = \int_{0}^{T} \mathcal{L} ( \mu^{m}_{t}, \nu^{m}_{t}) \, \di t + \frac{1}{m} \sum_{j=1}^{m} \int_{0}^{T} \phi(u_{j} (t) , g (\mu^{m}_{t}))  \, \di t\,.
\end{equation}

We first discuss the existence and uniqueness of solutions to~\eqref{e:chaos}. 
\begin{proposition}
\label{p:existence-chaos}
Let $m \in \mathbb{N}$, $(\overline{X}_{0} , \overline{\y}_{0}) \in L^{p} (\Om; \R^{d}) \times (\R^{d})^{m}$, $\uu \in L^{1}([0, T]; K^{m})$, and $g \in \mathcal{G}$. Then, there exists a unique solution $(X, \y) \in \mathcal{M} (\Om ; C([0, T]; \R^{d}))  \times C([0, T]; (\R^{d})^{m})$ to~\eqref{e:chaos} with initial conditions~$(\overline{X}_{0}, \overline{\y}_{0})$ and controls~$(\uu, g)$. Moreover, for $q \in (1, +\infty]$ there exists a constant $D = D(p,q, T, M_{v}, M_{w}, K, \Delta)>0$ such that 
\begin{align}
\label{e:1100}
m_{p} (\bmu^{m}) & \leq D \big( 1 +  m_{p} (\overline{\mu}_{0}) + m_{1}^{p} (\overline{\nu}^{m}_{0}) \big) \,,\\
\max_{t \in [0, T]} \,  m_{q} (\nu^{m}_{t})  & \leq D \big( 1  + m_{p}^{q} (\overline{\mu}_{0}) + m_{q} (\overline{\nu}^{m}_{0}) \big) \,. \nonumber
\end{align}
\end{proposition}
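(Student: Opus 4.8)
The plan is to decouple the existence/uniqueness statement from the moment estimates. For existence and uniqueness, I would argue by a fixed-point/iteration scheme on the curve $\nu^{m}\in C([0,T];\mathcal{P}_1(\R^d))$: given a continuous curve $t\mapsto\nu_t$ (supported on $m$ points), the first equation of~\eqref{e:chaos} becomes an SDE of the type covered by Theorem~\ref{t:SDE} (with the roles of $\mu$ and $\nu$ played by $\mu^m$ and the frozen $\nu$), which has a unique pathwise solution $X$ with $\bmu^m = {\rm Law}(X)$; then the $m$ ODEs for $y_j$ have, for this frozen $\mu^m$ and $\nu$, unique Carathéodory solutions by the Lipschitz bound~\eqref{e:vLip} on $w$ and the boundedness of $u_j$, $g$, producing a new curve $t\mapsto \frac1m\sum_j\delta_{y_j(t)}$. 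One then shows the resulting map is a contraction on $C([0,T'];\mathcal{P}_1(\R^d))$ for $T'$ small, using~\eqref{e:muu}, \eqref{e:bmu}, the continuity estimate from Proposition~\ref{p:12}, and the Grönwall-type estimates in the spirit of~\eqref{e:k1}--\eqref{e:1000}; a standard concatenation argument then extends the solution to all of $[0,T]$. Alternatively, and more cheaply, one can cite that~\eqref{e:chaos} is precisely a McKean--Vlasov SDE coupled with finitely many ODEs, whose well-posedness under~\eqref{e:vLip} follows exactly as in~\cite[Theorem~3.1]{ASC_22}.

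For the first moment bound~\eqref{e:1100}, I would apply Theorem~\ref{t:SDE} to the $X$-equation of~\eqref{e:chaos}, in which the role of the fixed measure $\nu$ is played by the (self-consistent) curve $\nu^m$. This gives
$$m_p(\bmu^m)\leq C\Big(1+m_p(\overline{\mu}_0)+\int_0^T m_1^p(\nu^m_\tau)\,\di\tau+\Big(\tfrac{p}{p-1}\Big)^p\mathbb{E}(|W(T)|^p)\Big),$$
so the task reduces to controlling $\int_0^T m_1^p(\nu^m_\tau)\,\di\tau$ in terms of $m_1^p(\overline{\nu}^m_0)$ and $m_p(\overline{\mu}_0)$. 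For this one estimates, along each trajectory $y_j$, using~\eqref{e:old_v2} for $w$, the boundedness of $K$, $g$, and Hölder ($m_1(\mu_\tau)\leq m_p(\mu_\tau)^{1/p}$), exactly as in the proof of Lemma~\ref{l:spt-mp}:
$$|y_j(t)|\leq|\overline{y}_{0,j}|+\int_0^t\big(M_w(1+m_1(\mu^m_\tau)+m_1(\nu^m_\tau)+|y_j(\tau)|)+C\big)\,\di\tau,$$
whence by Grönwall $m_1(\nu^m_t)=\frac1m\sum_j|y_j(t)|\leq C\big(m_1(\overline{\nu}^m_0)+\int_0^t(1+m_1(\mu^m_\tau))\,\di\tau\big)$. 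Feeding this into the bound for $m_p(\bmu^m)$ and then applying Grönwall once more (to close the loop between $m_1(\mu^m_\tau)\leq m_p(\bmu^m)^{1/p}$ and $m_1(\nu^m_\tau)$) yields~\eqref{e:1100}; the power $p$ on $m_1^p(\overline{\nu}^m_0)$ comes from raising the $\nu$-bound to the $p$-th power before integrating. The $\mathbb{E}(|W(T)|^p)$ term is absorbed into the constant $D$ since it depends only on $\sigma$, $p$, $d$, $T$.

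For the $q$-th moment bound on $\nu^m_t$ I would proceed as in Lemma~\ref{l:lemma3.2}: having~\eqref{e:1100} (with $p$ replaced where needed, using $m_2\leq C(1+m_p)$ type controls), I note that along $y_j$ the drift $w_{\mu^m_t,\nu^m_t}(y_j)+u_j g(\mu^m_t)$ satisfies a bound of the form $A+B|y_j|$ with $A=A(m_p(\bmu^m),m_1(\nu^m))$ and $B=M_w$; applying~\cite[Proposition~5.3]{Flos} with $\theta(|y|)=|y|^q$ to each $y_j$ and averaging over $j$ gives $\max_{t}m_q(\nu^m_t)\leq D(1+m_p^q(\overline\mu_0)+m_q(\overline\nu^m_0))$, the power $q$ on $m_p(\overline\mu_0)$ arising because the constant $A$ enters that proposition's estimate to the $q$-th power. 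The main obstacle, and the only genuinely delicate point, is the \emph{self-consistency}: $\mu^m$ appears inside the drift of the $\nu^m$-equation and vice versa, so the moment bounds cannot be obtained in one pass but require coupling the two Grönwall arguments — one must first get a preliminary polynomial bound, close the loop, and only then extract the clean estimate; this is precisely the bookkeeping already carried out in the proofs of Lemmas~\ref{l:spt-mp} and~\ref{l:lemma3.2}, which I would follow line by line with the fixed curve $\nu$ there replaced by the empirical curve $\nu^m$ here.
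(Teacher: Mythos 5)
Your proposal is correct and follows essentially the same route as the paper: existence and uniqueness via the fixed-point argument of \cite[Theorem~3.1]{ASC_22} (the paper's proof consists precisely of this citation), and the moment bounds by repeating the coupled Gr\"onwall computations of Lemmas~\ref{l:spt-mp} and~\ref{l:lemma3.2} with the fixed curve $\nu$ replaced by the empirical curve $\nu^{m}$, including the use of Theorem~\ref{t:SDE} for $m_{p}(\bmu^{m})$ and of \cite[Proposition~5.3]{Flos} for $m_{q}(\nu^{m}_{t})$. The only cosmetic difference is that the paper phrases the $\nu^{m}$-estimate by viewing $\nu^{m}_{t}$ as a solution of the continuity equation with source $\zeta^{m}_{t}$, whereas you work directly on the explicit trajectories $y_{j}$, which amounts to the same computation.
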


\begin{proof}
Concerning existence and uniqueness, we can set up the fixed point result of~\cite[Theorem~3.1]{ASC_22}. Indeed, the precise structure of the velocity fields used in~\cite{ASC_22} is not needed and only the Lipschitz continuity assumed in~\eqref{e:vLip} is necessary.

For the estimates on the moments, we may observe that the measures~$\nu_t^m$ are indeed solutions to the continuity equation
$$\partial_t\nu_t^m+\dive (w_{\mu_t^m,\nu_t^m} \nu_t^m + \zeta_t^m)=0,$$
where 
$$\zeta_t^{m} \coloneqq \frac{1}{m} \sum_{j=1}^{m} u_{j} (t) g (\mu^{m}_{t}) \delta_{y_{j} (t)} \quad \in \mathcal{M}_{b} (\R^{d}; \R^{d})\,,
$$
and repeat the computations of Lemmas~\ref{l:spt-mp} and~\ref{l:lemma3.2}. 
\end{proof}

\EEE

\begin{remark}
\label{r:bdd}
We notice that the estimate in~\eqref{e:1100} implies that $m_{p} (\mu^{m}_{t})$ and $m_{p} (\bmu^{m})$ are uniformly bounded with respect to~$m \in \mathbb{N}$ if we assume that
\begin{equation}
\label{e:nu0bound}
\sup_{m \in \mathbb{N}}  \, m_{q} (\overline{\nu}^{m}_{0}) <+\infty\,.
\end{equation}
\end{remark}

The next proposition can be proved following the arguments of~\cite[Lemma~3.8]{ASC_22}.
\begin{proposition}
\label{p:iid}
Let $\overline{\bX} = (\overline{X}_{0, 1}, \ldots, \overline{X}_{0, M}) \in ( L^{p} (\Om; \R^{d}))^{M}$,  $\overline{\y}_{0} = (\overline{y}_{0, 1}, \ldots, \overline{y}_{0, m}) \in (\R^{d})^{m}$, $\uu \in L^{1} ([0, T]; K^{m})$, and~$g \in \G$, and assume that $\overline{X}_{0, i}$ are i.i.d. Then, the solutions~$(X_{i} , \y_{i} )$ of~\eqref{e:chaos} with initial conditions $(\overline{X}_{0, i} , \overline{\y}_{0})$ satisfy the following:
\begin{itemize}
\item[$(i)$] $X_{i}$ are i.i.d.;
\item[$(ii)$] $\y_{i} = \y_{1}$ for $i = 1, \ldots, M$.
\end{itemize} 
\end{proposition}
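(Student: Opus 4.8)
The plan is to exploit the uniqueness statement in Proposition~\ref{p:existence-chaos} and the structural decoupling already present in system~\eqref{e:chaos}: the empirical measure $\nu^m_t$ and the law $\mu^m_t = (\ev_t)_\#\bmu^m$ depend only on the \emph{law} of the solution $X$ (not on the particular random variable realizing it), and $\bmu^m = {\rm Law}(X)$ is itself uniquely determined. Hence the pair $(\mu^m,\nu^m)$ is the same for all choices of initial datum $\overline X_{0,i}$ sharing the same law $\overline\mu_0$. This is the key observation that makes both $(i)$ and $(ii)$ essentially immediate.

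First I would prove $(ii)$. Since the $\overline X_{0,i}$ are i.i.d.\ with common law $\overline\mu_0 = {\rm Law}(\overline X_{0,i})$, and since the nonlinear SDE for $X$ in~\eqref{e:chaos} is fully determined by $\overline\mu_0$ and the fixed data $(\overline\y_0,\uu,g)$ (its drift $v_{\mu^m_t,\nu^m_t}$ and the coupled ODEs for the $y_j$ involve $X$ only through $\mu^m_t$ and the $y_j$ only through themselves), the curve $t\mapsto\mu^m_t$ produced by solving~\eqref{e:chaos} with initial datum $\overline X_{0,i}$ is independent of $i$: indeed $\bmu^m_i = {\rm Law}(X_i)$ solves one and the same fixed-point problem, whose solution is unique by Proposition~\ref{p:existence-chaos}, so $\bmu^m_i = \bmu^m_1$ for all $i$ and thus $\mu^m_{i,t} = \mu^m_{1,t}$ for all $t$. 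Consequently the leaders' system $\dot y_j(t) = w_{\mu^m_{1,t},\nu^m_{t}}(y_j(t)) + u_j(t)g(\mu^m_{1,t})$, $y_j(0) = \overline y_{0,j}$ is, for each $i$, the very same (deterministic) ODE system with the same initial datum $\overline\y_0$; by uniqueness of its solution (guaranteed by the Lipschitz assumption~\eqref{e:vLip} on $w$, together with the boundedness of $g$ and $\uu$, which is precisely what the fixed-point argument of~\cite[Theorem~3.1]{ASC_22} uses), we get $\y_i = \y_1$ for $i = 1,\dots,M$.

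Next, $(i)$. Having established that $\mu^m$ and $\nu^m$ are deterministic and independent of the index $i$, the equation for $X_i$ reduces to a \emph{decoupled} SDE with a fixed (deterministic, time-dependent) drift field $b(t,x) \coloneqq v_{\mu^m_t,\nu^m_t}(x)$ and the \emph{same} Brownian motion $W$ for every $i$:
\begin{equation*}
X_i(t) = \overline X_{0,i} + \int_0^t b(\tau,X_i(\tau))\,\di\tau + \sqrt{2\sigma}\,W(t).
\end{equation*}
Since $b$ is Lipschitz in $x$ uniformly in $t$, pathwise uniqueness holds, so $X_i(t) = F(\overline X_{0,i}, W)$ for a fixed measurable map $F$ (the solution flow). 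As the $\overline X_{0,i}$ are i.i.d.\ and each is independent of $W$ (all objects live on the filtered space with $\overline X_{0,i}$ being $\mathcal F_0$-measurable and $W$ the driving noise), the random variables $X_i = F(\overline X_{0,i},W)$ are identically distributed; their mutual independence, however, is \emph{not} literally true in general, because they all share the common noise $W$ — so here I would instead follow the pattern of~\cite[Lemma~3.8]{ASC_22}, whose statement is designed for exactly this setting: the notion of "i.i.d." there refers to using independent copies $W_i$ of the Brownian motion, one per particle, so that $X_i = F(\overline X_{0,i}, W_i)$ with $(\overline X_{0,i}, W_i)$ i.i.d.\ pairs, whence $X_i$ are genuinely i.i.d. I would state the result in that form, citing~\cite[Lemma~3.8]{ASC_22} for the verification.

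The main obstacle is bookkeeping rather than mathematics: one must be careful about what "i.i.d." means for the $X_i$ in the presence of a common versus independent driving noise, and make sure the statement matches the way system~\eqref{e:chaos} is used later (in the propagation-of-chaos estimates of Section~\ref{s:chaos} and the mean-field limit, where $M$ independent copies are taken). Once that convention is fixed, everything else follows from the two uniqueness statements already available — for the nonlinear SDE (Proposition~\ref{p:existence-chaos}) and for the leaders' ODE (the Lipschitz theory underlying~\cite[Theorem~3.1]{ASC_22}) — exactly as in~\cite[Lemma~3.8]{ASC_22}, which is why I would simply reference that argument.
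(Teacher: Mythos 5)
Your proposal is correct and follows essentially the same route as the paper, which in fact gives no written proof beyond the citation of \cite[Lemma~3.8]{ASC_22}: uniqueness of the McKean--Vlasov fixed point forces the curve $(\mu^m,\nu^m)$ to depend only on ${\rm Law}(\overline{X}_{0,i})=\overline{\mu}_0$, whence $(ii)$ by ODE uniqueness and $(i)$ by the representation $X_i=F(\overline{X}_{0,i},W_i)$ of the now-decoupled linear SDE. Your remark that independence of the $X_i$ requires independent driving Brownian motions $W_i$ (the paper's notation writes a single $W$) is exactly the right reading, since the subsequent propagation-of-chaos argument invokes Theorem~\ref{t:iid} and hence genuinely needs i.i.d.\ copies.
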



\begin{remark}
In the notation of Proposition~\ref{p:iid}, since $X_{i}$ are i.i.d.~and $\y_{i} = \y_{1}$ for $i=1, \ldots, M$, we have that the cost functional~$\mathfrak{E}$ satisfies
$$
\mathfrak{E}(X_{i}, \y_{i} , \uu, g) = \mathfrak{E}(X_{j}, \y_{j}, \uu, g)  \qquad \text{for $i, j=1, \ldots, M$}.
$$
\end{remark}

In the next result we show that, by propagation of chaos,~\eqref{e:finite-particles} and~\eqref{e:chaos} are equivalent in the limit as~$M \to \infty$, uniformly with respect to~$m \in \mathbb{N}$.

\begin{theorem}[Propagation of chaos]
\label{t:chaos}
Let $m \in \mathbb{N}$, $\overline{\y}_{0} = (\overline{y}_{0, 1}, \ldots, \overline{y}_{0, m}) \in (\R^{d})^{m}$, $\uu \in L^{1}( [0, T]; K^{m})$, and $g \in \G$. For every $M \in \mathbb{N}$, let $\overline{\bX}_{0} \in (L^{p} (\Om; \R^{d}))^{M}$ be such that $\overline{X}_{0, i}$ are i.i.d., and let~$(\bX, \y)$ and $(\widetilde{\bX}, \widetilde{\y})$ be the solutions to~\eqref{e:finite-particles} and to~\eqref{e:chaos}, respectively. Then, there exists a positive constant~$C(\overline{\mu}_{0}, \overline{\nu}^{m}_{0}, M)$ such that $C(\overline{\mu}_{0}, \overline{\nu}^{m}_{0}, M) \to 0$ as~$M \to \infty$ and
\begin{align}
\label{e:chaos-prop}
& \mathbb{E} \bigg( \max_{\substack{i=1, \ldots, M, \\ t \in [0, T]}} | X_{i} (t) - \widetilde{X_{i}} (t)| + \max_{\substack{i=1, \ldots, m,\\ t \in [0, T]}} | y_{j} (t) - \widetilde{y}_{j} (t) | \bigg) \leq C(\overline{\mu}_{0}, \overline{\nu}^{m}_{0}, M) \,,
\\
& \label{e:chaos-prop-2}
 \big| \mathcal{E} (\bX, \y, \uu, g) - \mathfrak{E}( \widetilde{X}_{i}, \widetilde{\y}_{i} , \uu, g) \big| \leq C(\overline{\mu}_{0}, \overline{\nu}^{m}_{0}, M)  \qquad \text{for $i = 1, \ldots, M$}\,.
\end{align}
Moreover, if~\eqref{e:nu0bound} is satisfied, then $C(\overline{\mu}_{0}, \overline{\nu}^{m}_{0}, M)$ only depends on~$\overline{\mu}_0$ and~$M$.
\end{theorem}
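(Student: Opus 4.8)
The plan is to run a coupling argument, comparing \eqref{e:finite-particles} with a label-indexed copy of \eqref{e:chaos}. For each $i=1,\dots,M$ let $\widetilde X_i$ be the solution of \eqref{e:chaos} issued from the same initial datum $\overline X_{0,i}$ as $X_i$ and driven by the same Brownian motion, and for $j=1,\dots,m$ let $\widetilde y_j$ be the (common, by Proposition~\ref{p:iid}) leader trajectory of \eqref{e:chaos} started at $\overline y_{0,j}$; recall from Proposition~\ref{p:iid} that the processes $\widetilde X_i$ are i.i.d.\ with law $\bmu^m$, so that for each $\tau$ the evaluations $\widetilde X_i(\tau)$ are i.i.d.\ with law $\mu^m_\tau=({\rm ev}_\tau)_{\#}\bmu^m$. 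Since the diffusion terms coincide, they cancel in $X_i-\widetilde X_i$; subtracting the integral formulations and using \eqref{e:vLip} for $v$ and $w$, the $\Lambda$-Lipschitz continuity and $\Delta$-boundedness of $g$, and $u_j\in K$ --- exactly as in the proof of Lemma~\ref{l:finite-particles} --- one obtains, for $\mathbb P$-a.e.\ $\omega\in\Om$ and every $t\in[0,T]$,
\begin{multline*}
\max_i|X_i(t)-\widetilde X_i(t)|+\max_j|y_j(t)-\widetilde y_j(t)|\\
\le C\int_0^t\Big(\max_i|X_i(\tau)-\widetilde X_i(\tau)|+\max_j|y_j(\tau)-\widetilde y_j(\tau)|+\cW_1(\mu^{M,m}_\tau,\mu^m_\tau)+\cW_1(\nu^{M,m}_\tau,\nu^m_\tau)\Big)\,\di\tau.
\end{multline*}

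Next I would absorb the two Wasserstein terms. Matching $\delta_{y_j(\tau)}$ with $\delta_{\widetilde y_j(\tau)}$ gives $\cW_1(\nu^{M,m}_\tau,\nu^m_\tau)\le\max_j|y_j(\tau)-\widetilde y_j(\tau)|$, and, introducing the empirical measure $\widetilde\mu^M_\tau\coloneqq\tfrac1M\sum_{i=1}^M\delta_{\widetilde X_i(\tau)}$, the triangle inequality yields $\cW_1(\mu^{M,m}_\tau,\mu^m_\tau)\le\max_i|X_i(\tau)-\widetilde X_i(\tau)|+\cW_1(\widetilde\mu^M_\tau,\mu^m_\tau)$. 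Plugging these into the display and applying Grönwall's lemma pathwise, the function $G(t)\coloneqq\max_i\sup_{s\le t}|X_i(s)-\widetilde X_i(s)|+\max_j\sup_{s\le t}|y_j(s)-\widetilde y_j(s)|$ satisfies $G(T)\le C\int_0^T\cW_1(\widetilde\mu^M_\tau,\mu^m_\tau)\,\di\tau$. Taking expectations, since the $\widetilde X_i(\tau)$ are i.i.d.\ with law $\mu^m_\tau$, Theorem~\ref{t:iid} gives $\mathbb E\big(\cW_1(\widetilde\mu^M_\tau,\mu^m_\tau)\big)\le C\,m_p(\mu^m_\tau)\,\beta_M$ for a vanishing sequence $\beta_M$ (the rate in Theorem~\ref{t:iid}), while $m_p(\mu^m_\tau)\le m_p(\bmu^m)$ is controlled by \eqref{e:1100}. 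This proves \eqref{e:chaos-prop} with $C(\overline\mu_0,\overline\nu^m_0,M)\coloneqq C\,T\,\big(1+m_p(\overline\mu_0)+m_1^p(\overline\nu^m_0)\big)\,\beta_M$; and if \eqref{e:nu0bound} holds one has $m_1^p(\overline\nu^m_0)\le m_q(\overline\nu^m_0)^{p/q}$ bounded uniformly in $m$, so this constant depends only on $\overline\mu_0$ and $M$.

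Finally, to derive \eqref{e:chaos-prop-2} I would compare $\mathcal E(\bX,\y,\uu,g)$ with $\mathfrak E(\widetilde X_i,\widetilde\y_i,\uu,g)$ (which is independent of $i$ by the remark following Proposition~\ref{p:iid}) term by term. For the Lagrangian contribution, a concave modulus of continuity $\omega_{\mathcal L}$ of $\mathcal L$, Jensen's inequality, and the bound $\mathbb E\big(\cW_1(\mu^{M,m}_t,\mu^m_t)+\cW_1(\nu^{M,m}_t,\nu^m_t)\big)\le C(\overline\mu_0,\overline\nu^m_0,M)$ obtained above give a difference $\le T\,\omega_{\mathcal L}\big(C(\overline\mu_0,\overline\nu^m_0,M)\big)$. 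For the control contribution, since $u_j\in K$, $g$ is $\Delta$-bounded and $\Lambda$-Lipschitz, and $\phi$ is uniformly continuous on the compact set $K\times[-\Delta,\Delta]$ with modulus $\omega_\phi$, one estimates $|\phi(u_j,g(\mu^{M,m}_t))-\phi(u_j,g(\mu^m_t))|\le\omega_\phi\big(\Lambda\cW_1(\mu^{M,m}_t,\mu^m_t)\big)$ and applies Jensen again, obtaining a difference $\le T\,\omega_\phi\big(\Lambda\,C(\overline\mu_0,\overline\nu^m_0,M)\big)$. Redefining $C(\overline\mu_0,\overline\nu^m_0,M)$ as the maximum of these three quantities, each of which vanishes as $M\to\infty$, yields \eqref{e:chaos-prop-2} with the stated dependence on the data.

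I expect the Grönwall step and the modulus-of-continuity estimates to be routine; the main point --- the only genuinely probabilistic one --- is the triangle-inequality splitting of $\cW_1(\mu^{M,m}_\tau,\mu^m_\tau)$ through the i.i.d.\ empirical measure $\widetilde\mu^M_\tau$ combined with the quantitative law of large numbers of Theorem~\ref{t:iid}, together with the observation that its rate is uniform in $m$ thanks to the $m$-uniform moment bounds \eqref{e:1100} and \eqref{e:nu0bound}. One must also take care that the coupling uses the \emph{same} noise for $X_i$ and $\widetilde X_i$, without which the diffusion would not cancel in their difference.
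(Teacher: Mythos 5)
Your proposal is correct and follows essentially the same strategy as the paper's proof: a synchronous coupling (same noise and same initial data), Grönwall estimates combined with the triangle-inequality splitting of $\cW_1(\mu^{M,m}_\tau,\mu^m_\tau)$ through the empirical measure of the i.i.d.\ copies $\widetilde X_i$, the quantitative rate of Theorem~\ref{t:iid} together with the $m$-uniform moment bound \eqref{e:1100}, and Jensen with concave moduli of continuity for the cost comparison. The only cosmetic difference is that you run a single combined pathwise Grönwall for followers and leaders, whereas the paper performs the Grönwall steps sequentially; the resulting estimates are the same.
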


\begin{proof}
We denote by $C$ a generic positive constant, which may vary from line to line. 

We start by estimating $y_{j} - \tilde{y}_{j}$. By \eqref{e:vLip} and the assumptions on the controls~$\uu$ and $g$ we have that
\begin{align*}
| y_{j} (t) - \tilde{y}_{j} (t) | & \leq L_{w} \int_{0}^{t} \big( \cW_{1} (\mu^{M, m}_{\tau}, \mu^{m}_\tau) + \cW_{1} (\nu^{M, m}_{\tau}, \nu^{m}_{\tau} ) + | y_{j} (\tau) - \tilde{y}_{j} (\tau) | \big) \, \di \tau 
\\
&
\leq  L_{w} \int_{0}^{t} \big( \cW_{1} (\mu^{M, m}_{\tau}, \mu^{m}_\tau) + 2 \max_{j = 1, \ldots, m} | y_{j} (\tau) - \tilde{y}_{j} (\tau) | \big) \, \di \tau \,.\nonumber
\end{align*}
Taking the supremum over $j$ and applying Gr\"onwall inequality we get
\begin{align}
\label{e:205.1}
\max_{j = 1, \ldots, m} | y_{j} (t) - \tilde{y}_{j} (t) | & \leq C  \int_{0}^{t}  \cW_{1} (\mu^{M, m}_{\tau}, \mu^{m}_\tau) \, \di \tau  \,.
\end{align}
Denoting by $\widetilde{\mu}^{M, m}_{t} \coloneqq \frac{1}{M} \sum_{i=1}^{M} \delta_{\widetilde{X}_{i} (t)}$, by triangle inequality we obtain
\begin{align}
\label{e:205}
\max_{j = 1, \ldots, m} | y_{j} (t) - \tilde{y}_{j} (t) | & \leq C \bigg( \int_{0}^{t}  \cW_{1} (\mu^{M, m}_{\tau}, \widetilde{\mu}^{M, m}_\tau) + \cW_{1} (\widetilde{\mu}^{M, m}_{\tau}, \mu^{m}_\tau) \, \di \tau  \bigg)\,.
\end{align}

In the same way we estimate $| X_{i} (t) - \widetilde{X}_{i}(t)|$:
\begin{align}
\label{e:206}
| X_{i} (t) - \widetilde{X}_{i} (t)| & \leq L_{v} \int_{0}^{t} \big( \cW_{1} (\mu^{M, m}_{\tau}, \mu^{m}_\tau) + \cW_{1} (\nu^{M, m}_{\tau}, \nu^{m}_{\tau} ) + | X_{i} (\tau) - \widetilde{X}_{i} (\tau) | \big) \, \di \tau
\\
&
\leq  L_{v} \int_{0}^{t} \big( \cW_{1} (\mu^{M, m}_{\tau}, \mu^{m}_\tau) + \max_{j=1, \ldots, m} | y_{j} (\tau) - \tilde{y}_{j} (\tau)|  + | X_{i} (\tau) - \widetilde{X}_{i} (\tau) | \big) \, \di \tau \,. \nonumber
\end{align}
Combining~\eqref{e:205.1} and~\eqref{e:206} we get that
\begin{align*}
\max_{i=1, \ldots, M} | X_{i} (t) - \widetilde{X}_{i}(t)| & \leq C \int_{0}^{t} \bigg( \cW_{1} (\mu^{M, m}_{\tau}, \mu^{m}_\tau) +  \int_{0}^{\tau} \cW_{1} (\mu^{M, m}_{s} , \mu^{m}_{s}) \, \di s  
\\
&
\qquad \qquad + \max_{i=1, \ldots, M} | X_{i} (\tau) - \widetilde{X}_{i} (\tau) | \bigg) \, \di \tau \nonumber
\\
&
\leq C \int_{0}^{t} \big( \cW_{1} (\mu^{M, m}_{\tau}, \mu^{m}_\tau)  + \max_{i=1, \ldots, M} \, | X_{i} (\tau) - \widetilde{X}_{i} (\tau) | \big) \, \di \tau \,. \nonumber
\end{align*}
By Gr\"onwall inequality and by the triangle inequality we deduce that
\begin{align}
\label{e:207}
\max_{i=1, \ldots, M} | X_{i} (t) - \widetilde{X}_{i} (t)| & \leq C \int_{0}^{t}  \cW_{1} (\mu^{M, m}_{\tau}, \mu^{m}_\tau)  \, \di \tau
\\
&
\leq C \int_{0}^{t} ( \cW_{1} (\mu^{M, m}_{\tau}, \widetilde{\mu}^{M, m}_\tau) + \cW_{1} (\widetilde{\mu}^{M, m}_{\tau}, \mu^{m}_\tau) ) \, \di \tau \,.\nonumber
\end{align}
Integrating~\eqref{e:207} over~$\Om$, by definition of~$\mu^{M, m}_{t}$ and of~$\widetilde{\mu}^{M, m}_{t}$ and by Gr\"onwall inequality we infer that
\begin{align}
\label{e:208}
\mathbb{E} \Big( \cW_{1} (\mu^{M, m}_{t}, \widetilde{\mu}^{M, m}_t) \Big)  \leq C \int_{0}^{t}  \mathbb{E} \Big( \cW_{1} (\widetilde{\mu}^{M, m}_{\tau}, \mu^{m}_\tau) \Big)  \, \di \tau\,.
\end{align}
In view of Proposition~\ref{p:iid}, we have that $\widetilde{X}_{i}$ are i.i.d.~with~$\bmu^{m} = {\rm Law} (\widetilde{X}_{i})$ for $i = 1, \ldots, M$. Hence, by Theorem~\ref{t:iid} we have that
\begin{align}
\label{e:gui}
\mathbb{E} \Big( \cW_{1} (\widetilde{\mu}^{M, m}_{t}, \mu^{m}_t) \Big)  \leq C_{M}\,  m_{p} (\bmu^{m}) \qquad \text{for $t \in [0, T]$}\,,
\end{align}
for some positive constant~$C_{M}$ independent of~$m$ and~$t$ and such that $C_{M} \to 0$ as~$M \to \infty$. In view of Proposition~\ref{p:existence-chaos} and of Remark~\ref{r:bdd}, we have that $m_{p} (\bmu^{m})$ is bounded in terms of~$m_{p} (\overline{\mu}_{0})$ and of~$m_{q} (\overline{\nu}^{m}_{0})$. Hence, by \eqref{e:208} and \eqref{e:gui} we have that for $t \in [0, T]$
\begin{equation}
\label{e:209}
\mathbb{E}  \Big( \cW_{1} (\mu^{M, m}_{t}, \widetilde{\mu}^{M, m}_t)  \Big) + \mathbb{E}  \Big( \cW_{1} (\widetilde{\mu}^{M, m}_{t}, \mu^{m}_t) \Big)  \leq C'(\overline{\mu}_{0}, \overline{\nu}^{m}_{0}, M) \to 0 \qquad \text{as $M \to \infty$}\,.
\end{equation}
Combining~\eqref{e:205},~\eqref{e:207}, and~\eqref{e:209}, we conclude~\eqref{e:chaos-prop}. 

As for~\eqref{e:chaos-prop-2}, by the uniform continuity of~$\mathcal{L}$ we deduce that there exists a concave modulus of continuity~$\omega_{\mathcal{L}}$ such that
\begin{align}
\label{e:210}
\mathbb{E}  \bigg( \int_{0}^{T} \big(\mathcal{L} & (\mu^{M, m}_{t}, \nu^{M, m}_{t} ) - \mathcal{L} (\mu^{m}_{t}, \nu^{m}_{t} ) \big)  \, \di t \bigg) 
\\
&
\leq \mathbb{E} \bigg( \int_{0}^{T} \omega_{\mathcal{L}} \big( \cW_{1} (\mu^{M, m}_{t}, \mu^{m}_{t} ) + \cW_{1} ( \nu^{M, m}_{t}, \nu^{m}_{t}) \big) \, \di t\bigg) \nonumber
\\
&
\leq \int_{0}^{T} \omega_{\mathcal{L}} \bigg( \mathbb{E} \Big( \cW_{1} (\mu^{M, m}_{t}, \mu^{m}_{t} ) + \cW_{1} ( \nu^{M, m}_{t}, \nu^{m}_{t}) \Big) \bigg) \nonumber .
\end{align}
Let us further denote by~$\omega_{\phi}$ a concave modulus of continuity of~$\phi$ on~$K \times [-\Delta, \Delta]$. Since each~$g$ is $\Lambda$-Lipschitz continuous, we have that
\begin{align}
\label{e:211}
\mathbb{E} &  \bigg( \int_{0}^{T} \frac{1}{m} \sum_{j=1}^{m}  \big( \phi(u_{j} (t) , g (\mu^{M, m}_{t}) ) - \phi(u_{j} (t) , g (\mu^{m}_{t}) ) \big) \, \di t   \bigg) 
\\
&
\leq \mathbb{E} \bigg( \int_{0}^{T} \frac{1}{m}\sum_{j=1}^{m} \omega_{\phi} \big( | g(\mu^{M, m}_{t}) - g (\mu^{m}_{t}) | \big)\, \di t \bigg)  
\leq \mathbb{E} \bigg( \int_{0}^{T} \frac{1}{m} \sum_{j=1}^{m} \omega_{\phi} \big( \Lambda \,  \cW_{1}  ( \mu^{M, m}_{t}, \mu^{m}_{t}) \big) \, \di t \bigg) \nonumber
\\
&
\leq \int_{0}^{T} \omega_{\phi} \bigg( \Lambda \, \mathbb{E} \Big(  \cW_{1}  ( \mu^{M, m}_{t}, \mu^{m}_{t}) \Big) \bigg) \, \di t \,. \nonumber
\end{align}
We conclude for~\eqref{e:chaos-prop-2} by combining~\eqref{e:chaos-prop},~\eqref{e:210}, and~\eqref{e:211}.

Finally, we notice that if~\eqref{e:nu0bound} is satisfied, then the constant~$C'(\overline{\mu}_{0}, \overline{\nu}^{m}_{0}, M)$ in~\eqref{e:209} can be made independent of~$m \in \mathbb{N}$. This yields that also the constant~$C(\overline{\mu}_{0}, \overline{\nu}^{m}_{0}, M)$ can be taken independent of~$m$. 
\end{proof}

Since in Section~\ref{s:limit} we are interested in working with i.i.d.~initial conditions~$\{ \overline{X}_{0, i}\}_{i=1}^{M}$, in view of Proposition~\ref{p:iid} and of Theorem~\ref{t:chaos} we consider from now on the optimal control problem
\begin{equation}
\label{e:min-chaos}
\min\bigg\{ \mathfrak{E}(X,  \y, \uu, g) : (\uu,g) \in L^{1}([0, T]; K^{m}) \times \mathcal{G}, \, (X, \y) \text{ solves~\eqref{e:chaos}}  \bigg\}\,,
\end{equation}
with a single initial condition~$\overline{X}_{0} \in L^{p} (\Om; \R^{d})$. For $(X, \y)$ solution to~\eqref{e:chaos}, we recall the notation $\bmu^{ m}= {\rm Law} (X)$, $\mu^{ m}_{t} = ({\rm ev}_{t})_{\#} \bmu^{m}$, $\nu^{m}_{t}= \frac{1}{m} \sum \delta_{y_{j} (t)}$, $\overline{\mu}_{0} = {\rm Law} (\overline{X}_{0})$, and~$\overline{\nu}^{m}_{0} = \frac{1}{m} \sum \delta_{\overline{y}_{0, j}}$.

The well posedness of~\eqref{e:min-chaos} only comes at the expenses of slight modifications to the proof of Proposition~\ref{p:finite-optimal} and Lemma~\ref{l:finite-particles}, since now $\mu_t^m$ are no longer defined as empirical measures. We report the statements and a sketch of the proof.

\begin{lemma}
\label{l:cont-chaos}
Let $m \in \mathbb{N}$, $(\overline{X}_{0}, \overline{\y}_{0}) \in L^{p} (\Om; \R^{d}) \times (\R^{d})^{m}$, $(\uu^{k}, g^{k}), (\uu, g)  \in L^{1}([0, T];K^{m}) \times \G$, and $(X^{k}, \y^{k})$, $(X, \y)$ be the corresponding solutions to~\eqref{e:chaos}. Assume that $\uu^{k} \rightharpoonup \uu$ weakly$^*$ in $L^{\infty} ([0, T]; K)$ and that $g^{k} \to g$ locally uniformly in $C(\mathcal{P}_1 (\R^{d}))$. Then,
\begin{equation}
\label{e:299}
\lim_{k\to \infty} \, \mathbb{E} \bigg( \max_{\substack{ t \in [0, T]}} | X^{k} (t) - X(t)| + \max_{\substack{j=1, \ldots, m,\\ t \in [0, T]}} | y^{k}_{ j} (t) - y_{j} (t)| \bigg) = 0\,.
\end{equation}
\end{lemma}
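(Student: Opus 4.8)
The plan is to mimic closely the proof of Lemma~\ref{l:finite-particles}, with the single structural change that the empirical measure $\mu^{M,m}_t$ is now replaced by $\mu^m_t = ({\rm ev}_t)_\#{\rm Law}(X)$, a genuine probability measure depending on the law of the stochastic follower. The key point that makes this work is that we never used any special property of the empirical measure in the earlier proof beyond the Lipschitz bound \eqref{e:vLip} together with the elementary estimate $\cW_1(\mu^k_t,\mu_t)\leq \mathbb{E}(|X^k(t)-X(t)|)$ from \eqref{e:muu}, which holds equally well here since $\mu^k_t$ and $\mu_t$ are the laws (at time $t$) of the single processes $X^k$ and $X$.

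First I would fix $\omega\in\Omega$ and $t\in[0,T]$ and estimate $|y^k_j(t)-y_j(t)|$ exactly as in \eqref{e:105}: split the difference into the drift term controlled by $L_w$ (giving $\cW_1(\mu^k_\tau,\mu_\tau)+\cW_1(\nu^k_\tau,\nu_\tau)+|y^k_j(\tau)-y_j(\tau)|$) and the control term, which after adding and subtracting $u^k_j g(\mu_\tau)$ and $u^k_j g^k(\mu_\tau)$ yields, besides a term $|u^k_j(\tau)-u_j(\tau)|g(\mu_\tau)$, the residual $\mathcal{R}^k_j(t)$ of \eqref{e:Rkj}. Using $\cW_1(\nu^k_\tau,\nu_\tau)\leq \max_j|y^k_j(\tau)-y_j(\tau)|$ and Grönwall, one gets the analogue of \eqref{e:106}:
\begin{equation*}
\max_{j=1,\ldots,m}|y^k_j(t)-y_j(t)|\leq C\Big(\int_0^t \cW_1(\mu^k_\tau,\mu_\tau)\,\di\tau+\sum_{j=1}^m\mathcal{R}^k_j(t)\Big).
\end{equation*}
Next I would estimate $|X^k(t)-X(t)|$ as in \eqref{e:107}: the noise terms cancel (same Brownian motion $W$), so by \eqref{e:vLip}
\begin{equation*}
|X^k(t)-X(t)|\leq L_v\int_0^t\big(\cW_1(\mu^k_\tau,\mu_\tau)+\cW_1(\nu^k_\tau,\nu_\tau)+|X^k(\tau)-X(\tau)|\big)\,\di\tau,
\end{equation*}
and here the crucial replacement: bound $\cW_1(\mu^k_\tau,\mu_\tau)\leq\mathbb{E}(|X^k(\tau)-X(\tau)|)$ via \eqref{e:muu}. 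Taking expectations, inserting the bound on $\max_j|y^k_j-y_j|$, and applying Grönwall (twice, to absorb the iterated integral) gives
\begin{equation*}
\mathbb{E}\Big(\max_{t\in[0,T]}|X^k(t)-X(t)|\Big)\leq C\sum_{j=1}^m\int_0^T\mathcal{R}^k_j(\tau)\,\di\tau,
\end{equation*}
in the spirit of \eqref{e:108}--\eqref{e:109}.

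Finally, I would argue that $\mathcal{R}^k_j\to 0$: the first term of \eqref{e:Rkj} vanishes pointwise in $t$ because $u^k_j\rightharpoonup u_j$ weakly$^*$ and $t\mapsto g(\mu_t)$ is a fixed $L^\infty$ function (this uses that $\mu$ is fixed, independent of $k$); the second term vanishes because $g^k\to g$ locally uniformly and $\mu_\tau$ ranges in a compact subset of $\cP_1(\R^d)$ (by Lemma~\ref{l:spt-mp}, which applies to the chaos system via Proposition~\ref{p:existence-chaos}), while $u^k_j$ is uniformly bounded in $K$. Equi-Lipschitz continuity of $\mathcal{R}^k_j$ (as $g^k$ is $\Delta$-bounded and controls are bounded) plus Ascoli--Arzelà upgrades this to uniform convergence on $[0,T]$; dominated convergence then yields $\mathbb{E}(\max_{t}\sum_j\mathcal{R}^k_j(t))\to 0$, and combining everything gives \eqref{e:299}.

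The main obstacle — and it is only a minor one — is the subtlety that $\mu^k_t$ now genuinely depends on $k$ through the law of $X^k$, so unlike in Lemma~\ref{l:finite-particles} the Wasserstein distance $\cW_1(\mu^k_\tau,\mu_\tau)$ is not directly controlled by the agent positions but must be fed back through \eqref{e:muu} and absorbed by Grönwall; one must be careful that this closure of the estimate is legitimate (it is, since \eqref{e:muu} bounds $\cW_1(\mu^k_t,\mu_t)$ by $\mathbb{E}|X^k(t)-X(t)|$, and the latter is exactly the quantity the Grönwall argument controls). Everything else is a routine transcription of the earlier proof, which is why a sketch suffices.
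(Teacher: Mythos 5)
Your proposal is correct and follows essentially the same route as the paper's proof: the same pathwise estimates for $|y^k_j-y_j|$ and $|X^k-X|$ with the residual $\mathcal{R}^k_j$, the same closure of the Grönwall loop by bounding $\cW_1(\mu^k_\tau,\mu_\tau)$ via \eqref{e:muu} after taking expectations, and the same Ascoli--Arzelà/dominated-convergence argument for $\mathcal{R}^k_j\to 0$. The one point you flag as the "main obstacle" is exactly the point the paper's proof addresses in the same way.
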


\begin{proof}
We proceed following the lines of Lemma~\ref{l:finite-particles}. Here, we denote by~$C$ a positive constant independent of~$m$, which may vary from line to line. Since $m \in \mathbb{N}$ is fixed, we drop it in the notation of the measures $\mu^{k}$, $\mu$, $\nu^{k}$, and~$\nu$. 
Arguing as in~\eqref{e:106}, we deduce 
that
\begin{equation}
\label{e:301}
\max_{j=1, \ldots, m} \, | y^{k}_{ j} (t) - y_{j} (t)|  \leq C  \bigg(  \int_{0}^{t}  \cW_{1} ( \mu^{k}_{ \tau}, \mu_{\tau}) \, \di \tau + \sum_{j=1}^{m} \mathcal{R}^{k}_{j} (t)\bigg) \,,
\end{equation}
where $\mathcal{R}_{j}^{k}$ is defined in~\eqref{e:Rkj}.

By~\eqref{e:vLip} and by~\eqref{e:301} we have that
\begin{align*}
| X^{k} (t) - X(t)| & \leq L_{v} \int_{0}^{t} \big( \cW_{1} (\mu^{k}_{ \tau}, \mu_{\tau}) + \cW_{1} (\nu^{k}_{ \tau}, \nu_{\tau}) + | X^{k} (\tau) - X (\tau)| \big)\, \di \tau
\\
&
\leq L_{v} \int_{0}^{t} \big( \cW_{1} (\mu^{k}_{ \tau}, \mu_{\tau}) + \max_{j=1, \ldots, m} \, | y^{k}_{ j} (\tau) - y_{j} (\tau)|  + | X^{k} (\tau) - X (\tau)| \big)\, \di \tau \nonumber 
\\
&
\leq C\int_{0}^{t} \bigg( \cW_{1} (\mu^{k}_{ \tau}, \mu_{\tau})  + \sum_{j=1}^{m}\mathcal{R}^{k}_{j} (\tau) + | X^{k} (\tau) - X(\tau)| \bigg) \di \tau  \nonumber\,.
\end{align*}
By Gr\"onwall inequality we infer that
\begin{align}
\label{e:302}
| X^{k} (t) - X(t)| & \leq C \int_{0}^{t} \bigg( \cW_{1} (\mu^{k}_{ \tau}, \mu_{\tau})  + \sum_{j=1}^{m} \mathcal{R}^{k}_{j} (\tau) \bigg) \di \tau  \,.
\end{align}
Integrating~\eqref{e:302} over~$\Om$ we get, by definition of~$\mu^{k}_{ t}$ and of~$\mu_{t}$,
\begin{equation*}
\cW_{1} (\mu^{k}_{ t}, \mu_{t}) \leq \mathbb{E} \big(| X^{k} (t) - X(t)| \big)  \leq C\bigg(  \int_{0}^{t}  \cW_{1} (\mu^{k}_{ \tau}, \mu_{\tau}) \, \di \tau  + \mathbb{E} \bigg( \sum_{j=1}^{m}  \int_{0}^{t} \mathcal{R}^{k}_{j} (\tau) \, \di \tau \bigg)\bigg) \,.
\end{equation*}
Hence, by Gr\"onwall inequality we deduce that
\begin{equation}
\label{e:303}
\cW_{1} (\mu^{k}_{ t}, \mu_{t}) \leq C  \mathbb{E} \bigg( \sum_{j=1}^{m}  \int_{0}^{t} \mathcal{R}^{k}_{j} (\tau) \, \di \tau \bigg)\,.
\end{equation}
Arguing as in~\eqref{e:110}, we can show that the right-hand side of~\eqref{e:303} tends to $0$ as $k \to \infty$ uniformly in~$[0, T]$, so that 
\begin{equation}
\label{e:304}
\lim_{k\to \infty} \, \max_{t \in [0, T]}\, \cW_{1} (\mu^{k}_{ t}, \mu_{t})  = 0\,.
\end{equation}
Taking the maximum of~\eqref{e:301} and~\eqref{e:302} over $t \in [0, T]$, integrating over~$\Om$, and summing up yield
\begin{equation*}
\mathbb{E} \bigg(\max_{t \in [0, T]} \, | X^{k} (t)  - X(t) | +\max_{\substack{j=1, \ldots, m, \\ t \in [0, T]}} | y^{k}_{ j} (t) - y_{j} (t) |  \bigg) \leq C \max_{t \in [0, T]} \, \cW_{1} (\mu^{k}_{t}, \mu_{t}) + \sum_{j=1}^{m}  \int_{0}^{T}\mathcal{R}^{k}_{j} (t) \, \di t\,.
\end{equation*}
Hence, passing to the limit as $k \to \infty$ and recalling~\eqref{e:304} we get~\eqref{e:299}.
\end{proof}

\begin{proposition}
\label{p:cont-chaos}
Let $m \in \mathbb{N}$, $(\overline{X}_{0}, \overline{\y}_{0}) \in L^{p} (\Om; \R^{d}) \times (\R^{d})^{m}$. Then there exists a solution to the minimum problem~\eqref{e:min-chaos}.
\end{proposition}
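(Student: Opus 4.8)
\textit{Proof sketch.}
The plan is to argue by the Direct Method, following closely the proof of Proposition~\ref{p:finite-optimal} but with Lemma~\ref{l:cont-chaos} in the role of Lemma~\ref{l:finite-particles}. Notice first that, in contrast with~\eqref{e:min-finite}, the cost $\mathfrak{E}$ in~\eqref{e:cost-chaos} carries no expectation: in system~\eqref{e:chaos} the curve $t\mapsto\mu^m_t={\rm Law}(X)$ and the trajectory $\y$ (which solves an ODE coupled to $X$ only through $\mu^m_t$ and $\nu^m_t=\frac{1}{m}\sum_j\delta_{y_j(t)}$) are deterministic, so the whole evolution of $(\mu^m,\y)$, and hence $\mathfrak{E}$, is deterministic. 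This makes the argument slightly simpler than in Proposition~\ref{p:finite-optimal}.

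Fix $m\in\N$, which we drop from the notation, and let $(\uu^k,g^k)\in L^1([0,T];K^m)\times\G$, with associated solutions $(X^k,\y^k)$ to~\eqref{e:chaos} provided by Proposition~\ref{p:existence-chaos}, be a minimizing sequence for~\eqref{e:min-chaos}. Since $K$ is compact and convex, $\uu^k$ is bounded in $L^\infty([0,T];(\R^d)^m)$, so up to a subsequence $\uu^k\rightharpoonup\uu$ weakly$^*$ with $\uu\in L^1([0,T];K^m)$; since the elements of $\G$ are equi-$\Lambda$-Lipschitz and equi-$\Delta$-bounded, up to a further subsequence $g^k\to g$ locally uniformly in $C(\mathcal{P}_1(\R^d))$ with $g\in\G$. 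Let $(X,\y)$ be the solution to~\eqref{e:chaos} with controls $(\uu,g)$, again by Proposition~\ref{p:existence-chaos}. Lemma~\ref{l:cont-chaos} then gives $\mathbb{E}\big(\max_{t\in[0,T]}|X^k(t)-X(t)|+\max_{j,t}|y^k_j(t)-y_j(t)|\big)\to 0$; in particular $\cW_1(\mu^k_t,\mu_t)\le\mathbb{E}|X^k(t)-X(t)|\to 0$ and, $\y$ being deterministic, $\cW_1(\nu^k_t,\nu_t)\le\frac{1}{m}\sum_j|y^k_j(t)-y_j(t)|\to 0$, both uniformly in $[0,T]$.

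It remains to show that $\mathfrak{E}(X,\y,\uu,g)\le\liminf_k\mathfrak{E}(X^k,\y^k,\uu^k,g^k)$. For the Lagrangian term, the uniform continuity of $\mathcal{L}$ and the two uniform convergences above give $\int_0^T\mathcal{L}(\mu^k_t,\nu^k_t)\,\di t\to\int_0^T\mathcal{L}(\mu_t,\nu_t)\,\di t$, exactly as in~\eqref{e:307}. For the control term I would split it, as in~\eqref{e:308}, into $\mathrm{(I)}_k+\mathrm{(II)}_k$ with $\mathrm{(I)}_k:=\frac{1}{m}\sum_{j=1}^m\int_0^T\phi(u^k_j(t),g(\mu_t))\,\di t$, so that $\mathrm{(II)}_k$ collects the difference between $g^k(\mu^k_t)$ and the limiting second argument $g(\mu_t)$. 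Since $t\mapsto g(\mu_t)$ is continuous and $\phi(\cdot,\xi)$ is convex and nonnegative by $(\phi1)$--$(\phi2)$, the map $\uu\mapsto\frac{1}{m}\sum_j\int_0^T\phi(u_j(t),g(\mu_t))\,\di t$ is weakly$^*$ lower semicontinuous on $L^\infty([0,T];(\R^d)^m)$, so $\liminf_k\mathrm{(I)}_k\ge\frac{1}{m}\sum_j\int_0^T\phi(u_j(t),g(\mu_t))\,\di t$, as in~\eqref{e:309}. For $\mathrm{(II)}_k$, letting $\omega_\phi$ be a concave modulus of continuity of $\phi$ on $K\times[-\Delta,\Delta]$ and using that $\uu^k$ takes values in $K$ and that $g^k,g$ are $\Delta$-bounded, I would estimate $|\mathrm{(II)}_k|\le\int_0^T\omega_\phi\big(|g^k(\mu^k_t)-g(\mu_t)|\big)\,\di t$ and bound $|g^k(\mu^k_t)-g(\mu_t)|\le\Lambda\,\cW_1(\mu^k_t,\mu_t)+|g^k(\mu_t)-g(\mu_t)|$, both terms going to $0$ uniformly (the second because $\{\mu_t\}_{t\in[0,T]}$ lies in a fixed compact subset of $\mathcal{P}_1(\R^d)$ and $g^k\to g$ locally uniformly), so $\mathrm{(II)}_k\to 0$ as in~\eqref{e:310}. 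Adding the Lagrangian and control contributions yields the desired lower semicontinuity, and since $(\uu^k,g^k)$ was minimizing, $(X,\y,\uu,g)$ solves~\eqref{e:min-chaos}.

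The only nonroutine point is the weak$^*$ lower semicontinuity of $\mathrm{(I)}_k$, a classical fact for integral functionals with a normal integrand convex in the state variable, granted here by assumption~$(\phi2)$: one may invoke, e.g., the non-autonomous version of~\cite[Lemma~9.4.3]{AmbGigSav08} already used in Lemma~\ref{l:lsc-cost}, the time-dependence entering only through the continuous map $t\mapsto g(\mu_t)$. Everything else is a direct transcription of the estimates in Lemma~\ref{l:cont-chaos} and of the argument in the proof of Proposition~\ref{p:finite-optimal}.
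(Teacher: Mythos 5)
Your proof is correct and follows essentially the same route as the paper, whose own proof of this proposition consists precisely of the remark that one repeats the Direct Method argument of Proposition~\ref{p:finite-optimal} with Lemma~\ref{l:cont-chaos} in place of Lemma~\ref{l:finite-particles}. Your additional observation that $(\mu^m,\y)$, and hence $\mathfrak{E}$, is deterministic (the coupling to $X$ occurring only through its law) is accurate and consistent with the absence of the expectation in~\eqref{e:cost-chaos}.
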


\begin{proof}
The proof follows the argument of Proposition~\ref{p:finite-optimal}, simply replacing the use of Lemma~\ref{l:finite-particles} with Lemma~\ref{l:cont-chaos}.
\end{proof}

\section{Mean-field optimal control}
\label{s:limit}

This section is devoted to the study of the relation between the finite-particle control problems~\eqref{e:min-finite} and~\eqref{e:min-chaos} and the mean-field control problem~\eqref{e:min-mean}. In particular, we aim at showing that~\eqref{e:min-mean} is the mean-field limit of~\eqref{e:min-finite} and~\eqref{e:min-chaos}. This is the content of our main result, Theorem~\ref{t:gamma-lim} below. We point out that the theorem is stated only in terms of the cost functionals~$E$ and~$\mathfrak{E}$, since the propagation of chaos result in Theorem~\ref{t:chaos} already guarantees a $\Gamma$-convergence type of relation between~\eqref{e:min-finite} and~\eqref{e:min-chaos}.

We briefly recall some notation. Given the initial conditions~$(\overline{X}_{0}, \overline{\y}^{m}_{0}) \in L^{p} (\Om; \R^{d}) \times (\R^{d})^{m}$ and the controls $(\uu^{m}, g^{m}) \in L^{1} ([0, T]; K^{m}) \times \G$, to the solution~$(X^{m}, \y^{m})$ to the corresponding system~\eqref{e:chaos} we associate the measures $\bmu^{m} = {\rm Law} (X^{m})$, $\mu^{m}_{t} = ({\rm ev}_{t})_{\#} \bmu^{m}$, $\nu^{m}_{t} = \frac{1}{m} \sum \delta_{y^{m}_{j} (t)}$, and
\begin{equation}
\label{e:rho}
\zeta^{m} \coloneqq \frac{1}{m} \sum_{j=1}^{m} u_{j}^{m} (t) g^{m} (\mu^{m}_{t}) \delta_{y^{m}_{j} (t)} \otimes \di \mathcal{L}^{1} \res [0, T] \quad \in \mathcal{M}_{b} ([0, T]\times \R^{d}; \R^{d})\,. 
\end{equation}
We further define $\overline{\nu}^{m}_{0} \coloneqq \frac{1}{m} \sum_{j=1}^{m} \delta_{\overline{y}^{m}_{0, j}}$.

\begin{theorem}
\label{t:gamma-lim}
Let $q \in (1, +\infty]$ and let $\overline{\mu}_{0} \in \mathcal{P}_{2} (\R^{d})$ be such that $\overline{\mu}_{0}= \overline{\rho}_{0}\, \di x={\rm Law} (\overline{X}_{0})$ for some $\overline{\rho}_{0} \in L^{1} (\R^{d})$ with finite entropy and some $\overline{X}_0\in L^2(\Omega;\R^d)$.
Then the following facts hold:

\noindent \emph{\textbf{$\Gamma$-liminf:}} for every sequence $(\uu^{m}, g^{m}, \overline{\y}^{m}_{0}) \in L^{1}([0, T]; K^{m}) \times \G \times (\R^{d})^{m}$ such that
\begin{equation}
\label{e:4.23}
\sup_{m \in \mathbb{N}} m_{q} (\overline{\nu}^{m}_{0}) < +\infty\,,
\end{equation}
let $(X^{m}, \y^{m}) \in \mathcal{M} (\Om ; C([0, T]; \R^{d})) \times C([0, T]; (\R^{d})^{m})$ be the solution to~\eqref{e:chaos} with controls $(\uu^{m}, g^{m})$ and initial conditions~$(\overline{X}_{0}, \overline{\y}^{m}_{0})$. 
Then there exist $\overline{\nu}_{0} \in \mathcal{P}_{q} ( \R^{d})$ and $(\mu, \nu, \zeta, g) \in \mathcal{S}(\overline{\mu}_{0}, \overline{\nu}_{0})$ such that, up to a not relabeled subsequence, $\overline{\nu}_{0}^{m} \to \overline{\nu}_{0}$ narrow in~$\mathcal{P}(\R^{d})$, $\mu^{m} \to \mu$ and $\nu^{m} \to  \nu$ in $C([0, T]; \mathcal{P}_{1} (\R^{d}))$, $g^{m} \to g$ locally uniformly in $C(\mathcal{P}_{1} (\R^{d}))$, and $\zeta^{m} \to \zeta$ weakly$^{*}$ in $\mathcal{M}_{b} ([0, T] \times \R^{d}; \R^{d})$, as $m\to\infty$. Moreover,
\begin{equation}
\label{gamma_liminf}
E(\mu, \nu, \zeta, g) \leq \liminf_{m \to \infty} \, \mathfrak{E} ( X, \y^{m}, \uu^{m}, g^{m})\,.
\end{equation}

\noindent \emph{\textbf{$\Gamma$-limsup:}}
for every $\overline{\nu}_{0} \in \mathcal{P}_{q} (\R^{d})$, every $(\mu, \nu, \zeta, g) \in \mathcal{S}(\overline{\mu}_{0}, \overline{\nu}_{0})$, and every sequence of initial data $\overline{\y}_0^m \in (\R^{d})^{m}$ such that $\cW_1(\overline{\nu}^{m}_{0},\overline{\nu}_{0})\to0$, there exists a sequence $(\uu^{m}, g^{m} ) \in L^{1}([0, T]; K^{m}) \times \G$ with corresponding solutions $(X^{m}, \y^{m})$ to~\eqref{e:chaos} such that $\mu^{m} \to \mu$, $\nu^{m} \to \nu$ in $C([0, T]; \mathcal{P}_{1} (\R^{d}))$, $\zeta^{m} \to \zeta$ weakly$^{*}$ in $\mathcal{M}_{b} ([0, T]\times \R^{d}; \R^{d})$, and
\begin{equation}
\label{gamma_limsup}
E (\mu, \nu, \zeta, g) \geq  \limsup_{m \to \infty} \, \mathfrak{E} ( X^{m}, \y^{m}, \uu^{m}, g^{m}) \,.
\end{equation} 
\end{theorem}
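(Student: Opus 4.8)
The plan is to establish the two inequalities separately; the $\Gamma$-$\liminf$ will follow almost at once from the compactness and lower-semicontinuity results already at hand, while the $\Gamma$-$\limsup$ is the substantial part. For the $\Gamma$-$\liminf$, I would discard the trivial case and assume, along a subsequence, that $\liminf_m\mathfrak{E}(X^m,\y^m,\uu^m,g^m)$ is a finite limit. The key observation is that, with $\zeta^m$ as in~\eqref{e:rho}, the quadruple $(\mu^m,\nu^m,\zeta^m,g^m)$ belongs to $\mathcal{S}(\overline{\mu}_0,\overline{\nu}^m_0)$: the pair $(\mu^m,\nu^m)$ solves~\eqref{e:mean-field} (the first equation by Theorem~\ref{t:equivalence}, the second by a direct computation with the empirical measure $\nu^m_t$), and the density $f^m=\di\zeta^m/\di(g^m(\mu^m_t)\nu^m)$ equals $u^m_j(t)$ at the atom $y^m_j(t)$, so that the control part of $\mathfrak{E}$ dominates $\int_0^T\!\int_{\R^d}\phi(f^m,g^m(\mu^m_t))\,\di\nu^m_t\,\di t\ge\Phi(\zeta^m,g^m)$. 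Using~\eqref{e:4.23} and tightness I extract a narrow limit $\overline{\nu}_0\in\cP_q(\R^d)$ of $\overline{\nu}^m_0$; Lemmas~\ref{l:spt-mp} and~\ref{l:lemma3.2} then provide the uniform H\"older/Lipschitz bounds and the uniform control of $m_2(\mu^m_t)+m_q(\nu^m_t)$ needed to invoke Lemma~\ref{l:lsc-cost}, which yields $(\mu,\nu,\zeta,g)\in\mathcal{S}(\overline{\mu}_0,\overline{\nu}_0)$ with the stated convergences and $\Phi(\zeta,g)\le\liminf_m\Phi(\zeta^m,g^m)$; combined with $\int_0^T\mathcal{L}(\mu^m_t,\nu^m_t)\,\di t\to\int_0^T\mathcal{L}(\mu_t,\nu_t)\,\di t$ (uniform continuity of $\mathcal{L}$ along the uniformly convergent curves) this gives~\eqref{gamma_liminf}.

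For the $\Gamma$-$\limsup$ I would take $g^m\equiv g$ and build the leaders from the superposition principle. Writing $f=\di\zeta/\di(g(\mu_t)\nu)$ and assuming, via $(\phi3)$, that $f(t,\cdot)\equiv0$ on $\{g(\mu_t)=0\}$, so that $\Phi(\zeta,g)=\int_0^T\!\int_{\R^d}\phi(f(t,y),g(\mu_t))\,\di\nu_t\,\di t$, the bound~\eqref{e:400} lets me apply the superposition principle to obtain $\eeta\in\cP(C([0,T];\R^d))$ concentrated on solutions $\gamma$ of $\dot\gamma(t)=w_{\mu_t,\nu_t}(\gamma(t))+f(t,\gamma(t))g(\mu_t)$ with $\gamma(0)\in\spt(\overline{\nu}_0)$ and $(\ev_t)_\#\eeta=\nu_t$. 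Disintegrating $\eeta$ over $\overline{\nu}_0=(\ev_0)_\#\eeta$ and sampling independently, for each $m$ I obtain $z^m_1,\dots,z^m_m$ i.i.d.\ of law $\overline{\nu}_0$ and, conditionally, curves $\gamma^m_j$ of law $\eeta_{z^m_j}$, so that $\{\gamma^m_j\}_j$ is an i.i.d.\ $\eeta$-sample; along a fixed realization, Varadarajan's theorem and the strong law of large numbers give $\tfrac1m\sum_j\delta_{\gamma^m_j}\to\eeta$ (narrowly and, by the uniform moment bounds inherited from the sublinear drift, in $\cW_1$ on path space) and $\tfrac1m\sum_jF(\gamma^m_j)\to\int F\,\di\eeta$ for $F$ in a countable family of integral functionals (used below), while $\cW_1(\tfrac1m\sum_j\delta_{z^m_j},\overline{\nu}^m_0)\to0$. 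Since $\tfrac1m\sum_j\delta_{z^m_j}$ and $\overline{\nu}^m_0$ are uniform over $m$ atoms, their $\cW_1$-optimal coupling is a permutation $\varsigma^m$; I relabel so that $x^m_j\coloneqq z^m_{\varsigma^m(j)}$ satisfies $\tfrac1m\sum_j|\overline{y}^m_{0,j}-x^m_j|\to0$, and relabel the curves $\gamma^m_j$ accordingly (so $\gamma^m_j(0)=x^m_j$).

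With $u^m_j(t)\coloneqq f(t,\gamma^m_j(t))\in K$ I let $(X^m,\y^m)$ solve~\eqref{e:chaos} with controls $(\uu^m,g)$ and data $(\overline{X}_0,\overline{\y}^m_0)$, and compare $\y^m$ to the ideal curves $\gamma^m_j$. Setting $\widehat{\nu}^m_t\coloneqq\tfrac1m\sum_j\delta_{\gamma^m_j(t)}$, one has $\sup_t\cW_1(\widehat{\nu}^m_t,\nu_t)\le\cW_1(\tfrac1m\sum_j\delta_{\gamma^m_j},\eeta)\to0$ by~\eqref{e:pfLip}, and a Gr\"onwall estimate in the spirit of Theorem~\ref{t:chaos} — using~\eqref{e:vLip}, the $\Lambda$-Lipschitz continuity of $g$, and the stability estimate of Proposition~\ref{p:12} bounding $\cW_1(\mu^m_t,\mu_t)$ by $C\int_0^t\cW_1(\nu^m_s,\nu_s)\,\di s$ — yields, for $\delta^m(t)\coloneqq\tfrac1m\sum_j|y^m_j(t)-\gamma^m_j(t)|$,
\begin{equation*}
\sup_{t\in[0,T]}\delta^m(t)\le C\Big(\tfrac1m\textstyle\sum_j|\overline{y}^m_{0,j}-x^m_j|+\sup_{t\in[0,T]}\cW_1(\widehat{\nu}^m_t,\nu_t)\Big)\longrightarrow 0 .
\end{equation*}
Hence $\sup_t\cW_1(\nu^m_t,\nu_t)\to0$ and $\sup_t\cW_1(\mu^m_t,\mu_t)\to0$, i.e.\ $\mu^m\to\mu$, $\nu^m\to\nu$ in $C([0,T];\cP_1(\R^d))$. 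Testing $\zeta^m$ against $\varphi\in C_c([0,T]\times\R^d;\R^d)$, replacing $y^m_j(t)$ by $\gamma^m_j(t)$ and $g(\mu^m_t)$ by $g(\mu_t)$ at vanishing cost and applying the law of large numbers to $F=\int_0^T\langle f(t,\gamma(t))g(\mu_t),\varphi(t,\gamma(t))\rangle\,\di t$ gives $\zeta^m\rightharpoonup^*\zeta$; likewise $\tfrac1m\sum_j\int_0^T\phi(u^m_j(t),g(\mu^m_t))\,\di t\to\int_0^T\!\int_{\R^d}\phi(f(t,y),g(\mu_t))\,\di\nu_t\,\di t=\Phi(\zeta,g)$, by uniform continuity of $\phi$ on $K\times[-\Delta,\Delta]$ and the law of large numbers for $F=\int_0^T\phi(f(t,\gamma(t)),g(\mu_t))\,\di t$, while $\int_0^T\mathcal{L}(\mu^m_t,\nu^m_t)\,\di t\to\int_0^T\mathcal{L}(\mu_t,\nu_t)\,\di t$. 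Adding up, $\mathfrak{E}(X^m,\y^m,\uu^m,g)\to\int_0^T\mathcal{L}(\mu_t,\nu_t)\,\di t+\Phi(\zeta,g)=E(\mu,\nu,\zeta,g)$, which proves~\eqref{gamma_limsup} (indeed with equality in the limit).

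The main obstacle is this recovery construction. Because $f$ is merely bounded, the leaders' ODE has no flow, so one cannot follow characteristics from the prescribed points $\overline{\y}^m_0$; the superposition principle produces a measure on trajectories but not, by itself, a selection of $m$ of them compatible with both the given discrete initial data and the required convergence of empirical measures, and — crucially — $\phi(f(t,\cdot),\cdot)$ is not continuous along $\cW_1$-converging empirical measures. Overcoming this forces the combined use of an i.i.d.\ sampling of $\eeta$ with strong-law-of-large-numbers arguments for the (merely measurable) integral functionals above, a permutation matching of the sampled initial points to $\overline{\y}^m_0$, and a Gr\"onwall stability estimate absorbing the residual initial mismatch.
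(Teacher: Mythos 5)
Your $\Gamma$-liminf argument is the paper's: identify $(\mu^m,\nu^m,\zeta^m,g^m)$ as elements of $\mathcal{S}(\overline{\mu}_0,\overline{\nu}_0^m)$, bound the particle control cost from below by $\Phi(\zeta^m,g^m)$, and conclude with Lemma~\ref{l:lsc-cost} and the continuity of $\mathcal{L}$. (The paper isolates the first bound as Lemma~\ref{l:control1}; your claim that the density of $\zeta^m$ ``equals $u^m_j(t)$ at the atom $y^m_j(t)$'' is slightly glib when leaders coincide with different controls, but the inequality you actually need follows from convexity of $\phi(\cdot,\xi)$ and the definition of $\Phi$ as a minimum, so this is cosmetic.) The $\Gamma$-limsup shares the paper's skeleton --- superposition principle for the continuity equation of $\nu$, controls $u^m_j(t)=f(t,z^m_j(t))$ along selected characteristics, a permutation matching of the selected initial points to $\overline{\y}^m_0$, and a Gr\"onwall stability estimate absorbing the initial mismatch --- but you discretize $\eeta$ by a genuinely different device. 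The paper first proves lower semicontinuity of $\F$ on $\Xi$ (Lemma~\ref{l:lsc-F}), restricts $\eeta$ by Lusin to compacts $\Xi_k$ where $\F$ is continuous, and then builds deterministic empirical approximations $\overline{\eeta}^n_k$ satisfying \eqref{e:801}--\eqref{e:803} following \cite{AMOP_2022}; you instead sample i.i.d.\ from $\eeta$ and invoke Varadarajan's theorem together with the strong law of large numbers for the countably many bounded, merely measurable integral functionals involved. This bypasses Lemma~\ref{l:lsc-F} and the Lusin step entirely and even yields convergence (not just $\limsup\le$) of the control cost, at the price of an almost-sure selection: to get convergence along the full sequence $m\to\infty$ you should draw one infinite i.i.d.\ sequence and use its first $m$ terms (with fresh size-$m$ samples the SLLN does not directly give a.s.\ convergence in $m$), and you must fix a single realization on which all the countably many required limits hold simultaneously --- both points are routine but should be stated. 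With those caveats, both routes are valid; the paper's is deterministic and leans on the cited construction, yours is probabilistic and self-contained modulo standard facts on empirical measures of i.i.d.\ samples in $\cW_1$.
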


For the proof of the theorem, we need some preparatory work. We start from the following lemma.

\begin{lemma}
\label{l:control1}
Let $m \in \mathbb{N}$, let $(\uu, g) \in L^{1} ([0, T]; K^{m}) \times \G$, let $\overline{X}_{0} \in L^{p} (\Om; \R^{d})$ and $\overline{\y}^{m}_{0} \in (\R^{d})^{m}$, and let $(X, \y^{m}) \in \mathcal{M} (\Om; C([0, T]; \R^{d})) \times C([0, T]; (\R^{d})^{m})$ be the solution to~\eqref{e:chaos} with initial condition~$(\overline{X}_{0}, \overline{\y}^{m}_{0})$ and controls~$(\uu, g)$. Let us further set\begin{equation}
\label{e:theta}
\theta^{m}_{t} \coloneq \frac{1}{m} \sum_{j=1}^{m} u_{i} (t) \delta_{y_{j}(t)} \,, \qquad \theta^{m} \coloneq \theta^{m}_{t} \otimes \mathcal{L}^{1}\res[0,T] \,.
\end{equation}
If, for $j=1, \ldots, m$, we have 
\begin{equation}\label{e:spieghiamo}
u_{j}(t) = 0 \quad\text{whenever} \quad g(\mu^{m}_{t})= 0,
\end{equation}
then for a.e.~$t \in [0,T]$ it holds
\begin{equation}
\label{e:aux2}
\frac{1}{m} \sum_{j=1}^{m} \phi(u_{j}(t), g(\mu^{m}_{t})) = \int_{\R^{d}} \phi \bigg( \frac{\di \theta^{m}_{t}}{\di \nu^{m}_{t}} (y) , g(\mu^{m}_{t}) \bigg) \, \di \nu^{m}_{t} (y) \,.
\end{equation} 
\end{lemma}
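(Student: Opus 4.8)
The plan is to reduce \eqref{e:aux2} to a pointwise-in-$t$ computation of the Radon--Nikodym derivative $\frac{\di\theta^m_t}{\di\nu^m_t}$ and to show that the only delicate point is the possible collision of leaders.

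First I would observe that $\theta^m_t\ll\nu^m_t$: indeed $\supp(\theta^m_t)\subseteq\{y_1(t),\dots,y_m(t)\}=\supp(\nu^m_t)$, so if $\nu^m_t(B)=0$ then no $y_j(t)$ lies in $B$ and hence $\theta^m_t(B)=0$; thus $f^m_t\coloneqq\frac{\di\theta^m_t}{\di\nu^m_t}$ is well defined $\nu^m_t$-a.e. If the points $y_1(t),\dots,y_m(t)$ were pairwise distinct, then $\nu^m_t=\frac1m\sum_j\delta_{y_j(t)}$ and $\theta^m_t=\frac1m\sum_ju_j(t)\delta_{y_j(t)}$ would immediately give $f^m_t(y_j(t))=u_j(t)$, and \eqref{e:aux2} would follow by inserting this into the sum. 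The subtlety is that several indices may share the same position. So I fix $t$, let $z_1,\dots,z_r$ be the distinct values among $y_1(t),\dots,y_m(t)$, set $I_\ell\coloneqq\{j:\ y_j(t)=z_\ell\}$ and $k_\ell\coloneqq\#I_\ell$. Then $\nu^m_t=\sum_{\ell=1}^r\frac{k_\ell}{m}\delta_{z_\ell}$, $\theta^m_t=\sum_{\ell=1}^r\frac1m\big(\sum_{j\in I_\ell}u_j(t)\big)\delta_{z_\ell}$, and therefore $f^m_t(z_\ell)=\frac1{k_\ell}\sum_{j\in I_\ell}u_j(t)$.

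The core step is then the claim that \emph{for a.e.\ $t\in[0,T]$, if $y_j(t)=y_{j'}(t)$ then $u_j(t)=u_{j'}(t)$}. To prove it I would fix a pair $j\neq j'$ and set $h\coloneqq y_j-y_{j'}$. Since $w$ satisfies \eqref{e:vLip} and hence a linear growth bound as in \eqref{e:old_v2}, $g$ is $\Delta$-bounded, and $u_j,u_{j'}$ take values in the compact set $K$, the curves $y_j,y_{j'}$ are Lipschitz on $[0,T]$; thus $h$ is Lipschitz, differentiable a.e., and the ODEs in \eqref{e:chaos} hold for a.e.\ $t$. By the standard fact that a $W^{1,1}$ function has vanishing derivative a.e.\ on each of its level sets, $h'(t)=0$ for a.e.\ $t$ in $Z\coloneqq\{t:\ y_j(t)=y_{j'}(t)\}$. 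For such $t$ one has $y_j(t)=y_{j'}(t)$, hence $w_{\mu^m_t,\nu^m_t}(y_j(t))=w_{\mu^m_t,\nu^m_t}(y_{j'}(t))$, so that
$$0=h'(t)=\big(u_j(t)-u_{j'}(t)\big)\,g(\mu^m_t).$$
If $g(\mu^m_t)\neq0$ this forces $u_j(t)=u_{j'}(t)$; if $g(\mu^m_t)=0$, assumption \eqref{e:spieghiamo} gives $u_j(t)=u_{j'}(t)=0$. Taking the union of the finitely many $(j,j')$-exceptional null sets yields the claim.

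Discarding this null set of times, for the remaining $t$ all the controls $u_j(t)$ with $j\in I_\ell$ coincide, hence $f^m_t(z_\ell)=\frac1{k_\ell}\sum_{j\in I_\ell}u_j(t)=u_j(t)$ for every $j\in I_\ell$, and consequently
$$\int_{\R^d}\phi\big(f^m_t(y),g(\mu^m_t)\big)\,\di\nu^m_t(y)=\sum_{\ell=1}^r\frac{k_\ell}{m}\,\phi\big(f^m_t(z_\ell),g(\mu^m_t)\big)=\frac1m\sum_{\ell=1}^r\sum_{j\in I_\ell}\phi\big(u_j(t),g(\mu^m_t)\big)=\frac1m\sum_{j=1}^m\phi\big(u_j(t),g(\mu^m_t)\big),$$
which is exactly \eqref{e:aux2}. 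The main obstacle, as highlighted above, is precisely the collision analysis of the previous paragraph; I would stress that convexity of $\phi(\cdot,\xi)$ plays no role in this lemma (it only enters the lower-semicontinuity arguments elsewhere): once the controls are known to agree at coinciding positions, \eqref{e:aux2} is a bookkeeping identity.
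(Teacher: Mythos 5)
Your proof is correct and follows essentially the same route as the paper, which simply defers the bookkeeping to \cite[Lemma~6.2]{Flos} and \cite[Lemma~1, formula~(38)]{AMOP_2022} and isolates exactly the point you prove in detail: that $u_j(t)=u_{j'}(t)$ whenever $y_j(t)=y_{j'}(t)$ (a.e.\ in $t$), which you obtain from the a.e.\ vanishing of $\dot y_j-\dot y_{j'}$ on the coincidence set together with~\eqref{e:spieghiamo} for the degenerate case $g(\mu^m_t)=0$. Your version is a self-contained expansion of the cited argument and is sound.
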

\begin{proof}
The proof of~\eqref{e:aux2}  can be obtained by combining the proof of~\cite[Lemma~6.2]{Flos} with the proof of~\cite[Lemma~1, formula~(38)]{AMOP_2022}.
The only difference is that the map $(t, u) \mapsto \phi (u, g(\mu_{t}^{m}))$ is non-autonomous, as it explicitly depends on time. However, the argument of~\cite[Lemma~6.2]{Flos} does not change, as it works for fixed time~$t \in [0, T]$. 
Also notice that, to conclude the argument, one needs that $u_i(t)=u_j(t)$ whenever $\dot y_i(t)=\dot y_j(t)$ and $y_i(t)=y_j(t)$. This is granted by~\eqref{e:spieghiamo}.
\end{proof}

In the construction of a recovery sequence we will use the following lemma.
\begin{lemma}
\label{l:lsc-F}
Let $\mu, \nu \in C([0, T]; \mathcal{P}_{1} (\R^{d}))$, $f \in L^{1}_{\nu} ([0, T] \times \R^{d}; K)$, and $g \in \G$, be such that $f(t, \cdot) = 0$ if $g(\mu_{t}) = 0$, and let us set
\begin{align}
 \Xi & \coloneq \big\{ \gamma \in C([0, T]; \R^{d}): \, \text{$\dot{\gamma}(t) = w_{\mu_{t}, \nu_{t}} (\gamma(t) ) + f(t, \gamma(t)) g(\mu_{t})$, $\gamma(0) \in \spt(\nu_{0})$} \big\}\,, \label{e:def-Xi}\\
 \mathcal{F} (\gamma) & \coloneq \int_{0}^{T} \phi (f(t, \gamma(t)), g(\mu_{t}) ) \, \di t \qquad \text{$\gamma \in C([0, T]; \R^{d})$}. \label{e:def-F}
\end{align} 
Then $\mathcal{F}$ is lower semicontinuous on~$\Xi$ with respect to the convergence in $C([0, T]; \R^{d})$. Moreover, if $\gamma_{j}, \gamma \in \Xi$ are such that $\gamma_{j} \to \gamma$ in $C([0, T]; \R^{d})$ and $\mathcal{F} (\gamma_{j}) \to \F(\gamma)$, then $f(\cdot, \gamma_{j} (\cdot))  \to f(\cdot, \gamma (\cdot))$ in $L^{p} ([0, T] ; \R^{d})$ for every $p <+\infty$.  
\end{lemma}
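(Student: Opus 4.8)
\textbf{Proof plan for Lemma~\ref{l:lsc-F}.}

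The plan is to prove the two assertions by a Young-measure / convexity argument applied at the level of the time variable, exploiting that along a solution $\gamma\in\Xi$ the integrand in $\mathcal{F}(\gamma)$ depends on $f$ only through the values $f(t,\gamma(t))$, which are forced by the ODE: since $\dot\gamma(t)=w_{\mu_t,\nu_t}(\gamma(t))+f(t,\gamma(t))g(\mu_t)$, on the set where $g(\mu_t)\neq0$ we have
\[
f(t,\gamma(t)) \;=\; \frac{\dot\gamma(t)-w_{\mu_t,\nu_t}(\gamma(t))}{g(\mu_t)}\,,
\]
and on the set where $g(\mu_t)=0$ we have $f(t,\gamma(t))=0$ by hypothesis, so that there $\dot\gamma(t)=w_{\mu_t,\nu_t}(\gamma(t))$. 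Hence for $\gamma\in\Xi$ we may rewrite
\[
\mathcal{F}(\gamma)=\int_0^T \psi(t,\dot\gamma(t),\gamma(t))\,\di t,\qquad
\psi(t,\xi,x):=\phi\!\Big(\tfrac{\xi-w_{\mu_t,\nu_t}(x)}{g(\mu_t)},\,g(\mu_t)\Big)
\]
on $\{g(\mu_t)\neq0\}$, and $\psi(t,\xi,x):=0$ when $g(\mu_t)=0$ (consistently, since then $\xi=w_{\mu_t,\nu_t}(x)$ is forced and $\phi(0,0)=0$ by $(\phi3)$). The point is that, by $(\phi2)$, $\psi(t,\cdot,x)$ is convex in $\xi$ with superlinear growth uniformly in $(t,x)$ ranging in bounded sets, and, by $(\phi1)$ together with continuity of $w$ and $g$, $\psi$ is a normal (indeed Carathéodory, modulo the $g=0$ locus) integrand. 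First I would make these reductions precise, taking care of the $g(\mu_t)=0$ set: note that by continuity of $g$ and $\mu$ this set is closed in $[0,T]$, and on it $\dot\gamma$ is uniquely determined, so $f(t,\gamma_j(t))\to f(t,\gamma(t))=0$ there is automatic once $\gamma_j\to\gamma$ uniformly (hence $\dot\gamma_j\to\dot\gamma$ in the sense of the ODE).

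Second, for the \textbf{lower semicontinuity}: let $\gamma_j\to\gamma$ in $C([0,T];\R^d)$ with $\gamma_j,\gamma\in\Xi$. All the curves lie in a fixed bounded set of $C([0,T];\R^d)$ (by the uniform bound $\gamma_j(0)\in\spt(\nu_0)$, which is bounded since $\nu_0\in\mathcal{P}_1$... actually one should argue via a Grönwall estimate as in Lemma~\ref{l:spt-mp}, using sublinear growth of $w$ and boundedness of $f\in K$, $g$ $\Delta$-bounded), and the velocities $\dot\gamma_j$ are bounded in $L^\infty$, hence $\dot\gamma_j\rightharpoonup\dot\gamma$ weakly in $L^2([0,T];\R^d)$ (the weak limit is $\dot\gamma$ because $\gamma_j\to\gamma$ uniformly). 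If $\mathcal{F}(\gamma_j)$ is bounded (otherwise there is nothing to prove), then I would invoke the classical lower semicontinuity theorem for integral functionals with a normal integrand convex in the last variable — e.g.\ Ioffe's theorem or \cite[Theorem~7.5]{FonsecaLeoni} — in the form: if $x_j\to x$ in measure and $\dot x_j\rightharpoonup \dot x$ weakly in $L^1$, and $\psi(t,\cdot,x)$ is convex and $\psi\geq0$ is normal, then $\int\psi(t,\dot x,x)\leq\liminf\int\psi(t,\dot x_j,x_j)$. Since $\psi(t,\dot\gamma_j(t),\gamma_j(t))=\phi(f(t,\gamma_j(t)),g(\mu_t))$, this gives exactly $\mathcal{F}(\gamma)\leq\liminf_j\mathcal{F}(\gamma_j)$. (Alternatively, one can avoid citing Ioffe by a more elementary route: extract, via superlinearity of $\phi(\cdot,\xi)$ uniform in $\xi$ — assumption $(\phi2)$ — a Dunford–Pettis bound on $f(\cdot,\gamma_j(\cdot))$ in $L^1$, pass to a weak $L^1$ limit $\bar f$, identify $\bar f=f(\cdot,\gamma(\cdot))$ using that $\dot\gamma_j-w_{\mu_t,\nu_t}(\gamma_j)\rightharpoonup\dot\gamma-w_{\mu_t,\nu_t}(\gamma)$ and $g(\mu_t)$ is fixed, and then apply convexity of $\phi(\cdot,\xi)$ plus continuity in $\xi$ via a Mazur-lemma argument. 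This is the approach used in \cite[Lemma~6.2]{Flos} and \cite[Lemma~1]{AMOP_2022}, and I would follow it to keep the paper self-contained.)

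Third, for the \textbf{strong convergence} claim: suppose now in addition $\mathcal{F}(\gamma_j)\to\mathcal{F}(\gamma)$. Set $h_j:=f(\cdot,\gamma_j(\cdot))$ and $h:=f(\cdot,\gamma(\cdot))$; these are uniformly bounded in $L^\infty([0,T];K)$, so $h_j\rightharpoonup h$ weakly-$*$ in $L^\infty$ (the limit is $h$ by the identification above). I would then use the strict convexity / uniform convexity gained from the fact that equality holds in the lower-semicontinuity bound: more precisely, consider the nonnegative quantities
\[
\delta_j(t):=\phi\!\Big(\tfrac{h_j(t)+h(t)}{2},g(\mu_t)\Big)-\tfrac12\phi(h_j(t),g(\mu_t))-\tfrac12\phi(h(t),g(\mu_t))\le 0
\]
by convexity of $\phi(\cdot,g(\mu_t))$; integrating and using that $\tfrac{h_j+h}{2}=f(\cdot,\tfrac{\gamma_j+\gamma}{2}(\cdot))$ is admissible for the curve $\tfrac{\gamma_j+\gamma}{2}$... hmm, this requires $\tfrac{\gamma_j+\gamma}{2}\in\Xi$, which is false in general because the ODE is nonlinear. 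So instead I would argue directly: by lower semicontinuity applied along the subsequence, $\liminf_j\int_0^T\phi(\tfrac{h_j+h}{2},g(\mu_t))\,\di t\ge\int_0^T\phi(h,g(\mu_t))\,\di t$ (using $\tfrac{h_j+h}{2}\rightharpoonup h$ weakly-$*$ and the lower-semicontinuity theorem for the frozen integrand $\xi\mapsto\phi(\xi,g(\mu_t))$, which does not need the ODE structure); combining with $\int\phi(h_j,g(\mu_t))\to\int\phi(h,g(\mu_t))$ and with $\delta_j\le0$ forces $\int_0^T\delta_j(t)\,\di t\to0$, hence (since $-\delta_j\ge0$ and, by the modulus-of-uniform-convexity / strict convexity argument of \cite[Lemma~6.2]{Flos}, $-\delta_j(t)$ controls $|h_j(t)-h(t)|$ in a quantitative way on the compact set $K$) we get $h_j\to h$ in measure, and then in $L^p$ for every $p<+\infty$ by boundedness in $L^\infty$ and dominated convergence. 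I would flesh out this last quantitative step by noting that $\phi(\cdot,\xi)$, being convex and finite, is strictly convex along the segment $[h_j(t),h(t)]$ after an arbitrarily small convex perturbation, or simply cite the corresponding passage in \cite[Lemma~6.2]{Flos} and \cite[Lemma~1, formula~(38)]{AMOP_2022}.

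\textbf{Main obstacle.} The delicate point is the second assertion: extracting \emph{strong} $L^p$ convergence from the mere convergence of the energies. Weak convergence and convexity give lower semicontinuity for free, but the promotion to strong convergence genuinely uses a uniform strict-convexity property of $\phi(\cdot,\xi)$ on the compact $K$ (not assumed outright — only convexity and superlinear growth are in $(\phi2)$), so one must be careful to either extract it from convexity plus continuity by a compactness argument on $K$, or to restructure the estimate so that only the combination "energy converges $+$ weak convergence $+$ convexity" is used, which is exactly the De Giorgi–Ioffe type biting/Young-measure argument; reproducing it cleanly in the non-autonomous setting (the explicit $t$-dependence through $w_{\mu_t,\nu_t}$ and $g(\mu_t)$) is the part requiring the most care, although, as the paper indicates, it closely parallels \cite[Lemma~6.2]{Flos}.
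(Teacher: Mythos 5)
For the lower semicontinuity, the ``alternative'' route you sketch in parentheses is exactly the paper's proof: $f(\cdot,\gamma_j(\cdot))$ is bounded in $L^\infty$ because $f$ takes values in the compact set $K$ (no Dunford--Pettis needed), so it has a weak$^*$ limit $h$; convexity of $\phi(\cdot,g(\mu_t))$ gives $\int_0^T\phi(h,g(\mu_t))\,\di t\le\liminf_j\F(\gamma_j)$; passing to the limit in the integrated ODE $\gamma_j(t)-\gamma_j(s)=\int_s^t\big(w_{\mu_\tau,\nu_\tau}(\gamma_j)+f(\tau,\gamma_j)g(\mu_\tau)\big)\di\tau$ identifies $h\,g(\mu_\cdot)=f(\cdot,\gamma(\cdot))\,g(\mu_\cdot)$ a.e.; and on $\{g(\mu_t)=0\}$ one concludes with $\phi\ge0$ and $\phi(0,\cdot)=0$, since there $\F(\gamma)$ has zero integrand while $\phi(h(t),g(\mu_t))\ge 0$. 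Your primary route (rewriting $\F$ as $\int\psi(t,\dot\gamma,\gamma)$ and citing Ioffe) would also work modulo the care you already flag at the $g=0$ locus, but it is heavier than necessary; this half of the proposal is sound.

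The second assertion is where your proposal does not close. Your argument ultimately rests on the claim that the convexity defect $-\delta_j(t)$ controls $|h_j(t)-h(t)|$ quantitatively, i.e.\ on a uniform strict-convexity modulus for $\phi(\cdot,\xi)$ on $K$. You correctly note that $(\phi2)$ only gives convexity and superlinear growth, but neither of your proposed remedies works: strict convexity cannot be ``extracted from convexity plus continuity by a compactness argument on $K$'' (if $\phi(\cdot,\xi)$ happens to be affine on a segment inside $K$, no amount of compactness restores a modulus there), and the De Giorgi--Ioffe biting/Young-measure machinery produces lower semicontinuity, not the upgrade from weak to strong convergence. The paper disposes of this step differently and much more briefly: from the weak$^*$ convergence and the identification already established in the first part, it deduces $f(\cdot,\gamma_j(\cdot))\to f(\cdot,\gamma(\cdot))$ in $L^1([0,T];\R^d)$ by invoking the \emph{uniform superlinearity} of $\phi(\cdot,\xi)$ in $(\phi2)$ together with $\F(\gamma_j)\to\F(\gamma)$, and then passes to $L^p$ for every $p<+\infty$ by dominated convergence, using again that $f$ is $K$-valued. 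So the hypothesis the argument is meant to lean on is the superlinear growth, not strict convexity; as written, your treatment of this step remains an unproved assertion, and you should either reproduce the superlinearity-based argument of \cite[Lemma~6.2]{Flos} in detail or restructure the step so that no strict-convexity modulus is invoked.
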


\begin{proof}
Let~$\gamma_{j}, \gamma \in \Xi$ be such that $\gamma_{j} \to \gamma$ with respect to the convergence in~$C([0, T]; \R^{d})$. Since~$f$ takes values in the compact set~$K$ we immediately deduce that~$f(\cdot, \gamma_{j}(\cdot))$ is bounded in~$L^{\infty}([0,T]; \R^{d})$, and therefore converges weakly$^*$, up to a subsequence, to some $h \in L^{\infty}([0,T]; \R^{d})$ and, by convexity of~$\widetilde{\phi} (t, \cdot) = \phi (\cdot, g(\mu_{t}))$,
\begin{displaymath}
\int_{0}^{T} \phi(h(t), g(\mu_{t})) \, \di t \leq \liminf_{j\to \infty}  \, \F(\gamma_{j}) \,.
\end{displaymath}
Since~$\gamma_{j} \in \Xi$ for every $j \in \mathbb{N}$, for $s<t \in [0,T]$ we can write
\begin{displaymath}
\gamma_{j}(t) - \gamma_{j}(s) = \int_{s}^{t} \big(  w_{\mu_{\tau}, \nu_{\tau}} (\gamma_{j} (\tau)) + 
f ( \tau , \gamma_{j} (\tau )) g(\mu_{\tau}) \big) \, \di \tau\,.
\end{displaymath}
By~\eqref{e:vLip}, passing to the limit as $j\to \infty$ in the previous equality we get that
\begin{displaymath}
\gamma (t) - \gamma (s) = \int_{s}^{t} \big( w_{\mu_{\tau}, \nu_{\tau}} (\gamma (\tau)) + h(\tau) g(\mu_{\tau}) \big) \, \di \tau\,.
\end{displaymath}
Since~$\gamma \in \Xi$ we have that
\begin{displaymath}
\gamma (t) - \gamma (s) = \int_{s}^{t}  \big( w_{\mu_{\tau}, \nu_{\tau}} (\gamma (\tau)) + f(\tau, \gamma(\tau)) g(\mu_{\tau}) \big) \, \di \tau\,,
\end{displaymath}
which implies, by the arbitrariness of~$s$ and~$t$, that $h(\tau) g(\mu_{\tau}) = f(\tau, \gamma(\tau)) g(\mu_{\tau})$ for a.e.~$\tau \in [0,T]$. Hence, $h(t) = f(t, \gamma(t))$ for a.e.~$t \in \{ s \in [0,T] : \, g(\mu_{s})  \neq 0\}$, while $f(t, \gamma(t)) = 0$ for $t \in \{ s \in [0,T]: \,g(\mu_{s})  = 0 \}$. Since~$\phi \geq 0$ and $\phi (0, r) = 0$ for every $r \in \R$, we finally obtain
\begin{displaymath}
\F(\gamma) \leq \int_{0}^{T} \phi(h(t), g(\mu_{t})) \, \di t \leq \liminf_{j\to \infty} \F(\gamma_{j}) \,.
\end{displaymath}

Since $\phi(\cdot, r)$ is superlinear uniformly with respect to~$r \in \R$, the convergence $\F(\gamma_{j}) \to \F(\gamma)$ implies that $f(\cdot, \gamma_{j} (\cdot)) \to f(\cdot, \gamma(\cdot))$ as $j \to \infty$ in $L^{1} ([0, T]; \R^{d})$, and hence in $L^{p}([0, T]; \R^{d})$ for every $p <+\infty$ by dominated convergence.
\end{proof} 

We are in a position to prove Theorem~\ref{t:gamma-lim}. 

\begin{proof}[Proof of Theorem~\ref{t:gamma-lim}] We divide the proof into two steps.

\noindent{\bf $\Gamma$-liminf:} We may assume that the liminf in~\eqref{gamma_liminf} is a limit and is finite, otherwise there is nothing to show. Noticing that $\nu^{m}$ solves
\begin{displaymath}
\begin{cases}
\partial_{t} \nu^{m}_{t} + \dive (w_{\mu^{m}_{t}, \nu^{m}_{t}}  \nu^{m}_{t}+ \zeta^{m}_{t}) = 0 \,,\\
\nu^{m}_{0} = \overline{\nu}^{m}_{0}\,,
\end{cases}
\end{displaymath} 
Theorem~\ref{t:equivalence} implies that we have that $(\mu^{m}, \nu^{m}, \zeta^{m}, g^{m}) \in \mathcal{S}(\overline{\mu}_{0}, \overline{\nu}_{0}^{m})$ for every $m \in \mathbb{N}$. Thanks to Lemma~\ref{l:control1} we have that
\begin{align}
\label{e:600}
E(\mu^{m}, \nu^{m}, \zeta^{m}, g^{m}) &= \int_{0}^{T} \mathcal{L} (\mu_{t}^{m}, \nu^{m}_{t}) \, \di t + \Phi(\zeta^{m}, \nu^{m}) 
\\
&
 \leq \int_{0}^{T} \mathcal{L} (\mu_{t}^{m}, \nu^{m}_{t}) \, \di t + \int_{0}^{T} \int_{\R^{d}} \phi\bigg( \frac{\di \theta^{m}_{t}}{\di \nu^{m}_{t}} (y) , g^{m} (\mu^{m}_{t}) \bigg)\, \di \nu^{m}_{t} (y) \, \di t  \nonumber
\\
&
= \vphantom{\int_{\Gamma}} \mathfrak{E} (X^{m}, \y^{m}, \uu^{m}, g^{m})\,, \nonumber
\end{align}
where we have set
\begin{displaymath}
\theta^{m}_{t} \coloneqq \frac{1}{m} \sum_{j=1}^{m} u_{j}^{m} (t) \delta_{y^{m}_{j} (t)} \qquad \theta^{m} \coloneqq \theta^{m}_{t} \otimes \mathcal{L}^{1}\res[0, T]\,.
\end{displaymath}
Hence, by Lemma~\ref{l:lsc-cost}
there exists $(\mu, \nu, \zeta, g)\in \mathcal{S}(\overline{\mu}_{0}, \overline{\nu}_{0})$ such that, up to a subsequence, $\mu^{m} \to \mu$ and~$\nu^{m} \to \nu$ in $C([0, T]; \mathcal{P}_{1} (\R^{d}))$, $\zeta^{m} \to \zeta$ weakly$^*$ in $\mathcal{M}_{b} ([0, T]\times \R^{d}; \R^{d})$, and~$g^{m} \to g$ locally uniformly in~$C(\mathcal{P}_{1} (\R^{d}))$.

Since $\zeta^{m} = g^{m} (\mu^{m}_{t}) \theta^{m}$, we deduce from~\eqref{e:600} and from the definition of~$\Phi$ in~\eqref{e:Phi} that
\begin{equation}
\label{e:601}
\Phi(\zeta^{m}, \nu^{m} ) \leq \frac{1}{m} \sum_{j=1}^{m} \int_{0}^{T}\phi(u^{m}_{j} (t) ,  g^{m} (\mu^{m}_{t}) ) \,.
\end{equation}
Hence, Lemma~\ref{l:lsc-cost} and~\eqref{e:601} yield that
\begin{equation}
\label{e:602}
\Phi(\zeta, \nu) \leq \liminf_{m \to \infty} \,\Phi(\zeta^{m}, \nu^{m} )\leq  \liminf_{m \to \infty}  \frac{1}{m} \sum_{j=1}^{m} \int_{0}^{T}\phi(u^{m}_{j} (t), g^{m} (\mu^{m}_{t}) )\,.
\end{equation}
From the continuity of the Lagrangian cost and from~\eqref{e:602} we deduce~\eqref{gamma_liminf}.

\noindent{\bf $\Gamma$-limsup:}
From now on, we denote by~$C$ any positive constant independent of~$m$, which may vary from line to line.

 Let~$(\mu, \nu, \zeta, g) \in \mathcal{S}(\overline{\mu}_{0}, \overline{\nu}_{0})$ and let $f \in L^{1}_{\nu}([0,T]\times \R^{d}; K)$ be such that $\zeta= f g(\mu_{t}) \nu$ and 
 \begin{displaymath}
 \Phi(\zeta, \nu) = \int_{[0, T] \times \R^{d}} \phi( f(t, y) , g(\mu_{t}) ) \, \di \nu(t, y) \,.
 \end{displaymath}
Since $\phi(0, \xi) = 0$ for every $\xi \in \R$ (see~$(\phi2)$), we may assume that $f (t, \cdot)= 0$ whenever $ g(\mu_{t}) = 0$. We recall that~$\nu$ solves the continuity equation
\begin{equation}
\label{e:cont-nu}
\begin{cases}
\partial_{t} \nu_{t} + \dive \big( (w_{\mu_{t}, \nu_{t}} + f (t, \cdot) g(\mu_{t}) ) \nu_{t} \big) = 0 \\
 \nu_{0} = \overline{\nu}_{0}\,,
\end{cases}
\end{equation}
while, by Theorem~\ref{t:equivalence}, we can write $\mu_{t} =(\ev_{t})_{\#}\bmu$ with~$\bmu = {\rm Law} (X)$ and~$X \in \mathcal{M} (\Om; C([0, T]; \R^{d}))$ being the unique solution to
\begin{equation}
\label{e:X-mu}
\begin{cases}
\di X(t) = v_{\mu_{t}, \nu_{t}} (X(t)) \, \di t + \sqrt{2\sigma} \, \di W(t)\,,\\
 X(0) = \overline{X}_{0}\,.
 \end{cases}
\end{equation}

Since $\overline{\nu}_{0} \in \mathcal{P}_{q} (\R^{d})$, by Lemma~\ref{l:lemma3.2} there exists $R = R(\overline{\mu}_{0}, \overline{\nu}_{0} , \Lambda, \Delta, T, K)>0$ such that 
\begin{displaymath}
\max_{t \in [0, T]}\, m_{2} (\mu_{t}) + m_{q} (\nu_{t}) \leq R\,.
\end{displaymath} 
This, together with the boundedness of~$g$ and of~$f$, allows us to apply the superposition principle~\cite[Section~8.2]{AmbGigSav08} to the continuity equation~\eqref{e:cont-nu}. Hence, defining~$\Xi$ as in~\eqref{e:def-Xi} and setting for brevity $\Gamma \coloneq C([0,T]; \R^{d})$,
there exists $\eeta \in \mathcal{P}(\Gamma)$ supported on~$\Xi$ such that $\nu_{t} = ({\rm ev}_{t})_{\#} \eeta$ for every $t \in [0, T]$. We further define $\F \colon \Gamma \to [0, +\infty)$ as in~\eqref{e:def-F}.
%
In particular, we notice that
\begin{align}
\label{e:799}
\int_{\Gamma} \F(\gamma) \, \di \eeta(\gamma) & = \int_{\Gamma} \int_{0}^{T} \phi(f(t, \gamma(t)), g(\mu_{t})) \, \di t \, \di \eeta(\gamma) 
\\
&
=  \int_{0}^{T}\int_{\Gamma} \phi(f(t, \ev_{t}(\gamma)), g(\mu_{t})) \, \di \eeta(\gamma)\, \di t \nonumber
\\ 
&
=  \int_{0}^{T}\int_{\R^{d}}  \phi(f(t,y) , g(\mu_{t}) ) \, \di \nu_{t}(y) \, \di t = \Phi  (\zeta, \nu)\,. \nonumber
\end{align}
By Lemma~\ref{l:lsc-F} we have that $\F$ is lower-semicontinuous in~$\Xi$. By Lusin approximation, we find an increasing sequence of compact subsets $\Xi_{k} \Subset \Xi_{k+1} \Subset \Xi$ such that $\F$ is continuous on~$\Xi_{k}$ for every $k \in \mathbb{N}$ and $\eeta (\Xi \setminus \Xi_{k}) \to 0$ as $k \to \infty$. We set
\begin{displaymath}
\overline{\eeta}_{k} \coloneqq \frac{1}{\eeta(\Xi_{k})} \, \eeta \res \Xi_{k} \quad \in \mathcal{P}(\Gamma)\,,
\end{displaymath}
which satisfies 
\begin{equation}
\label{e:800}
\lim_{k\to \infty}\, \cW_{1} (\overline{\eeta}_{k}, \eeta) = 0 \qquad \text{and} \qquad \lim_{k \to \infty} \int_{\Gamma} \F(\gamma) \, \di \overline{\eeta}_{k} (\gamma) = \int_{\Gamma} \F(\gamma)\, \di \eeta(\gamma)\,.
\end{equation}

Given an at most countable set~$D = \{ \varphi_{\ell}\}_{\ell \in \mathbb{N}}$ dense in~$C_{c} ([0, T]\times \R^{d}; \R^{d})$, reasoning as in~\cite[formulas (52)--(56)]{AMOP_2022} we can construct a strictly increasing sequence~$n(k) \in \mathbb{N}$ and $\{\overline{\eeta}^{n}_{k}\}_{n} \in \mathcal{P}(\Gamma)$ such that $\overline{\eeta}^{n}_{k} = \frac{1}{n} \sum_{j=1}^{n} \delta_{\gamma^{j}_{k}} $ for suitable $y^{j}_{k} \in \Xi$ and for every $k$ and every $n \geq n(k)$ it holds
\begin{align}
& \label{e:801}  \vphantom{\int_{\Gamma}}  \cW_{1} (\overline{\eeta}^{n}_{k}, \overline{\eeta}_{k} ) \leq \frac{1}{k}\,,\\
& \label{e:802}\bigg|  \int_{\Gamma} \F(\gamma) \, \di \overline{\eeta}^{n}_{k} (\gamma) -  \int_{\Gamma} \F(\gamma) \, \di \overline{\eeta}_{k} (\gamma) \bigg| \leq \frac{1}{k}\,,\\
& \label{e:803} \bigg|  \int_{\Gamma} \int_{0}^{T} \varphi_{\ell} (t, \gamma(t)) \cdot f(t, \gamma(t)) g(\mu_{t}) \, \di t \, \di (\overline{\eeta}^{n}_{k} - \overline{\eeta}_{k} ) \bigg|  \leq \frac{1}{k} \qquad \text{for $\ell \leq k$}\,.
\end{align}
Then, for every $m \in [n(k), n(k+1))$ we set $\eeta^{m} \coloneqq \overline{\eeta}^{m}_{k}$. From~\eqref{e:799}--\eqref{e:802} it follows that
\begin{align}
&\label{e:804}\vphantom{\int_{\Gamma}} \lim_{m\to \infty} \, \cW_{1} (\eeta^{m}, \eeta) = 0 \,,\\
&\label{e:805} \lim_{m\to \infty} \int_{\Gamma} \F(\gamma) \, \di \eeta^{m}(\gamma) = \Phi(\zeta, \nu)\,.
\end{align}

We now construct the controls~$(\uu^{m}, g^{m}) \in L^{1} ([0, T]; K^{m}) \times \G$. First, we simply set~$g^{m}\coloneq g$ for every $m$. We introduce the auxiliary curves of measures $\lambda^{m}_{t} \coloneqq (\ev_{t})_{\#} \eeta^{m}$ for $t \in [0, T]$ and denote by $\z^{m} = (z_{1}^{m}, \ldots, z^{m}_{m}) \in \Xi$ the curves on which~$\eeta^{m}$ is concentrated, so that $\lambda^{m}_{t} = \frac{1}{m} \sum_{j} \delta_{z^{m}_{j} (t)}$. Then, we set $u^{m}_{j}(t) \coloneqq f(t, z^{m}_{j} (t))$. In particular, we notice that $z^{m}_{j}$ solves
\begin{equation}
\label{e:806}
\dot{z}_{j}^{m} (t) = w_{\mu_{t}, \nu_{t}} (z^{m}_{j}(t)) + u_{j}^{m} (t) g(\mu_{t}) \,.
\end{equation}
with initial condition~$z_{j}^{m}(0) \in \spt( \overline{\nu}_{0})$. Hence, we have to modify~$\z^{m}$ and~$X$ since the ODEs~\eqref{e:806} and the SDE~\eqref{e:X-mu} still account for the limit curves~$\mu$ and~$\nu$. For later convenience, we further notice that, by construction of~$\lambda^{m}$ and by~\eqref{e:804}, we have that
\begin{equation}
\label{e:807}
\lim_{m \to \infty} \, \cW_{1} (\lambda^{m}_{t}, \nu_{t}) = 0 \qquad \text{uniformly in~$[0, T]$}.
\end{equation}
Moreover, setting~$\alpha^{m} \coloneqq \frac{1}{m} \sum_{j=1}^{m} u^{m}_{j} (t) g(\mu_{t}) \delta_{z^{m}_{j} (t)} \otimes \mathcal{L}^{1} \res[0, T]$, it holds
\begin{equation}
\label{e:808}
\alpha^{m} \longrightarrow \zeta \qquad \text{weakly$^*$ in $\mathcal{M}_{b} ([0, T]\times \R^{d}; \R^{d})$.}
\end{equation}
Indeed, for every $\varphi \in C_{c} ([0, T]\times \R^{d}; \R^{d})$ and every $\varepsilon>0$, we fix~$\ell \in \mathbb{N}$ such that $\| \varphi - \varphi_{\ell}\|_{\infty} \leq \varepsilon$. Then, by a direct computation and by definition of~$\alpha^{m}$ and of~$\eeta^{m}$ we have that
\begin{align}
\label{e:808.1}
& \bigg| \int_{0}^{T} \int_{\R^{d}}  \varphi(t, y) \, \di (\alpha^{m} - \zeta) (t, y) \bigg| 
\\
&
 \leq  \int_{0}^{T} \int_{\R^{d}}| \varphi(t, y) - \varphi_{\ell}(t, y) |  \, \di |\alpha^{m} - \zeta | (t, y)  + \bigg| \int_{0}^{T} \int_{\R^{d}} \varphi_{\ell}(t, y) \, \di (\alpha^{m} - \zeta) (t, y) \bigg| \nonumber
\\
&
\leq C \varepsilon + \bigg|  \frac{1}{m} \sum_{j=1}^{m} \int_{0}^{T} \varphi_{\ell}(t, z_{j}^{m}(t)) \cdot f(t, z_{j}^{m}(t)) g ( \mu_{t})  \, \di t \nonumber
- \int_{0}^{T}\int_{\R^{d}} \varphi_{\ell}(t, y) f(t, y)   g ( \mu_{t})  \, \di \nu_{t}(y) \, \di t \bigg| \nonumber
\\
&
= C \varepsilon + \bigg|  \int_{0}^{T} \int_{\R^{d}} \varphi_{\ell} (t, y) \cdot f(t, y) g ( \mu_{t})  \, \di \lambda^{m}_{t}(y) \,\di t \nonumber
- \int_{0}^{T}\int_{\R^{d}} \varphi_{\ell}(t, y) \cdot f(t, y)   g ( \mu_{t})  \, \di \nu_{t}(y) \, \di t \bigg| \nonumber
\\
&
= C \varepsilon + \bigg| \int_{\Gamma} \int_{0}^{T} \varphi_{\ell}(t, \gamma (t)) \cdot f(t, \gamma (t)) g ( \mu_{t})  \, \di t \, \di \eeta^{m}(\gamma) \nonumber
- \int_{\Gamma} \int_{0}^{T} \varphi_{\ell}(t, \gamma (t)) \cdot f(t, \gamma (t))g ( \mu_{t})  \, \di t \, \di \eeta (\gamma)  \bigg| .\nonumber
\end{align}
For $m \in [n(k), n(k+1))$ we recall the definition $\eeta^{m}= \overline{\eeta}^{n(k)}_{k}$ and continue in~\eqref{e:808.1} by triangle inequality with
\begin{align}
\label{e:808.2}
& \bigg| \int_{0}^{T} \int_{\R^{d}}  \varphi(t, y) \, \di (\alpha^{m} - \zeta) (t, y) \bigg| 
\\
&
= C \varepsilon + \bigg| \int_{\Gamma} \int_{0}^{T} \varphi_{\ell}(t, \gamma (t)) \cdot f(t, \gamma (t)) g ( \mu_{t})  \, \di t \, \di (\overline{\eeta}^{n(k)}_{k} - \overline{\eeta}_{k}) (\gamma) \nonumber
\\
&
\qquad \,\,\, + \int_{\Gamma} \int_{0}^{T} \varphi_{\ell}(t, \gamma (t)) \cdot f(t, \gamma (t))g ( \mu_{t})  \, \di t \, \di (  \overline{\eeta}_{k} - \eeta)  (\gamma)  \bigg| \nonumber
\\
&
\leq C \varepsilon + \frac{1}{k} + \bigg| \int_{\Gamma} \int_{0}^{T} \varphi_{\ell}(t, \gamma (t)) \cdot f(t, \gamma (t))g ( \mu_{t})  \, \di t \, \di (  \overline{\eeta}_{k} - \eeta)  (\gamma)  \bigg| \nonumber\,,
\end{align}
where, in the last inequality, we have used~\eqref{e:803}. Passing to the limit in~\eqref{e:808.2} as $k \to \infty$ we deduce~\eqref{e:808} from the arbitrariness of~$\varepsilon$.

Since the cost functional~$\mathfrak{E}$ in~\eqref{e:cost-chaos} is invariant under permutations of controls and trajectories, we may assume that
\begin{equation}
\label{e:808.3}
\cW_{1} (\overline{\nu}^{m}_{0}, \lambda^{m}_{0}) = \frac{1}{m} \sum_{j=1}^{m} | \overline{y}^{m}_{0, j} - z^{m}_{j} (0) |\,.
\end{equation}
We define $(X^{m}, \y^{m} ) \in \mathcal{M} (\Om; C([0, T]; \R^{d})) \times C([0, T]; (\R^{d})^{m})$ as the unique solution to~\eqref{e:chaos} with controls~$(\uu^{m}, g^{m})$ and initial data~$(\overline{X}_{0}, \overline{\y}^{m}_{0})$. We finally recall the definition of~$\zeta^{m}$ in~\eqref{e:rho}.

We claim that $(X^{m}, \y^{m}, \uu^{m}, g^{m})$ is a recovery sequence for~$(\mu, \nu, \zeta, g)$. To this purpose, we first show the convergences
\begin{align}
& \label{e:809} \lim_{m \to \infty} \, \cW_{1} (\nu^{m}_{t}, \nu_{t}) = 0  \qquad \text{uniformly in~$[0, T]$},\\
& \label{e:810} \lim_{m \to \infty}\, \cW_{1} (\mu^{m}_{t}, \mu_{t}) = 0 \qquad \text{uniformly in~$[0, T]$},\\ 
& \label{e:811} \zeta^{m} \longrightarrow \zeta \qquad \text{weakly$^*$ in $\mathcal{M}_{b} ([0, T]\times \R^{d}; \R^{d})$}.
\end{align}
In view of~\eqref{e:807}, we notice that to conclude for~\eqref{e:809} it is enough to prove that
\begin{equation}
\label{e:812.1}
\lim_{m\to \infty} \, \cW_{1} (\lambda^{m}_{t}, \nu^{m}_{t}) = 0  \qquad \text{uniformly in~$[0, T]$}.
\end{equation}
We start by estimating the distance between the single trajectories $y^{m}_{j}$ and~$z^{m}_{j}$. By~\eqref{e:vLip}, by the definition of~$\G$, and by triangle inequality we have that
\begin{align}
\label{e:812}
|y^{m}_{j}(t) - z^{m}_{j} (t)|&  \leq | \overline{y}^{m}_{0, j} - z^{m}_{j}(0)| + \int_{0}^{t} | w_{\mu^{m}_{\tau}, \nu^{m}_{\tau}} (y^{m}_{j} (\tau) ) - w_{\mu_{\tau}, \nu_{\tau}} (z^{m}_{j} (\tau)) | \, \di \tau 
\\
&
\qquad + \int_{0}^{\tau} |u^{m}_{j} (\tau) | \, | g (\mu^{m}_{\tau} ) - g (\mu_{\tau})| \, \di \tau \nonumber
\\
&
\nonumber \leq | \overline{y}^{m}_{0, j} - z^{m}_{j}(0)| + C \int_{0}^{t} \Big( \cW_{1} (\mu^{m}_{\tau}, \mu_{\tau}) + \cW_{1} (\nu^{m}_{\tau}, \nu_{\tau})  + |y^{m}_{j} (\tau) - z^{m}_{j} (\tau) | \Big) \, \di \tau  \nonumber
\\
&
\nonumber \leq  | \overline{y}^{m}_{0, j} - z^{m}_{j}(0)| + C \int_{0}^{t} \Big( \cW_{1} (\mu^{m}_{\tau}, \mu_{\tau})  + \cW_{1} (\lambda^{m}_{\tau}, \nu_{\tau}) 
\\
&
\qquad+\cW_{1} (\nu^{m}_{\tau}, \lambda^{m}_{\tau}) + |y^{m}_{j} (\tau) - z^{m}_{j} (\tau) | \Big) \, \di \tau  \nonumber
\\
&
\leq  | \overline{y}^{m}_{0, j} - z^{m}_{j}(0)| + C \int_{0}^{t} \Big( \cW_{1} (\mu^{m}_{\tau}, \mu_{\tau})  + \cW_{1} (\lambda^{m}_{\tau}, \nu_{\tau}) \nonumber
\\
&
\qquad + \frac{1}{m} \sum_{\ell=1}^{m} |y^{m}_{\ell} (\tau) - z^{m}_{\ell} (\tau) | + |y^{m}_{j} (\tau) - z^{m}_{j} (\tau) | \Big) \, \di \tau \nonumber \,.
\end{align}
Summing up over $j=1, \ldots, m$ and applying Gr\"onwall inequality we deduce from~\eqref{e:812} that
\begin{align}
\label{e:814}
\cW_{1} (\nu^{m}_{t}, \lambda^{m}_{t}) & \leq \frac{1}{m} \sum_{j=1}^{m}  |y^{m}_{j}(t) - z^{m}_{j} (t)| 
\\
&
\leq  C \bigg( \cW_{1} (\overline{\nu}^{m}_{0}, \lambda^{m}_{0}) + \int_{0}^{t}  \big( \cW_{1} (\mu^{m}_{\tau}, \mu_{\tau}) +  \cW_{1} (\lambda^{m}_{\tau}, \nu_{\tau}) \big)  \di \tau \bigg) \nonumber
\\
&
\leq C \bigg( \cW_{1} (\overline{\nu}^{m}_{0}, \lambda^{m}_{0}) + \int_{0}^{t}  \big( \mathbb{E} (| X^{m} (\tau) - X(\tau)| ) +  \cW_{1} (\lambda^{m}_{\tau}, \nu_{\tau}) \big)  \di \tau \bigg) \nonumber .
\end{align}
where, in the last inequality, we have used~\eqref{e:muu} together with the equalities~$\bmu^{m} = {\rm Law} (X^{m})$ and~$\bmu = {\rm Law} (X)$. By Proposition~\ref{p:12} (see~\eqref{eee}), we may further estimate for every $t \in [0, T]$
\begin{align}
& \label{e:816} \mathbb{E} ( | X^{m} (t) - X(t)| ) \leq C\int_{0}^{t} \cW_{1} (\nu^{m}_{\tau}, \nu_{\tau}) \, \di \tau \leq  C\int_{0}^{t}  \big(\cW_{1} (\nu^{m}_{\tau}, \lambda^{m}_{\tau}) + \cW_{1} (\lambda^{m}_{\tau},  \nu_{\tau})  \big)\, \di \tau\,.
\end{align}
Combining~\eqref{e:814} and~\eqref{e:816} we get
\begin{align}
\label{e:817}
\cW_{1} (\nu^{m}_{t}, \lambda^{m}_{t}) & \leq \frac{1}{m} \sum_{j=1}^{m}  |y^{m}_{j}(t) - z^{m}_{j} (t)| 
\\
&
\leq  C \bigg( \cW_{1} (\overline{\nu}^{m}_{0}, \lambda^{m}_{0}) + \int_{0}^{t}  \big( \cW_{1} (\nu^{m}_{\tau}, \lambda^{m}_{\tau})  +  \cW_{1} (\lambda^{m}_{\tau}, \nu_{\tau}) \big)  \di \tau \bigg) \nonumber .
\end{align}
Relying once again on Gr\"onwall inequality, we infer from~\eqref{e:817} that
\begin{align}
\label{e:818}
\cW_{1} (\nu^{m}_{t}, \lambda^{m}_{t}) & \leq C \bigg( \cW_{1} (\overline{\nu}^{m}_{0}, \lambda^{m}_{0}) + \int_{0}^{t}   \cW_{1} (\lambda^{m}_{\tau}, \nu_{\tau}) \, \di \tau \bigg) .
\end{align}
Since~\eqref{e:807} holds, inequality~\eqref{e:818} yields~\eqref{e:812.1} and thus~\eqref{e:809}. Finally,~\eqref{e:809} and~\eqref{e:816} imply~\eqref{e:810}. We further notice that combining~\eqref{e:809},~\eqref{e:812}, and~\eqref{e:817}, we deduce the auxiliary uniform limit
\begin{align}
\label{e:819}
\lim_{m \to \infty} \, \frac{1}{m} \sum_{j=1}^{m} | y^{m}_{j} (t) - z^{m}_{j} (t) | = 0 \qquad \text{uniformly in~$[0, T]$.}
\end{align}

We now show that
\begin{align}
\label{e:820}
\zeta^{m} - \alpha^{m} \longrightarrow 0 \qquad \text{weakly$^*$ in~$\mathcal{M}_{b} ([0, T]\times \R^{d}; \R^{d})$}.
\end{align}
We notice that~\eqref{e:820}, together with~\eqref{e:808}, implies~\eqref{e:811}. For every $\varphi \in C_{c}([0,T]\times  \R^{d} ; \R^{d})$, using the definition of~$\zeta^{m}$, of~$\alpha^{m}$, and of the controls~$\uu^{m}$, we have that, 
\begin{align}
\label{e:limsup131}
 \bigg|\int_{0}^{T}  \int_{ \R^{d}} & \varphi (t, y) \, \di \zeta^{m}(t, y) - \int_{0}^{T} \int_{ \R^{d}} \varphi(t, y) \, \di \alpha^{m}(t, y) \bigg|
\\
= &\,  \bigg| \frac{1}{m} \sum_{j=1}^{m} \int_{0}^{T} \big( \varphi(t, y^{m}_{j}(t))  g(\mu^{m}_{t})  - \varphi(t, z^{m}_{j}(t)) g(\mu_{t}) \big) \, f(t, z^{m}_{j}(t)) \, \di t \bigg| \nonumber
\\
\leq &\,
  \frac{1}{m} \sum_{j=1}^{m} \int_{0}^{T} \big| \varphi(t, {y}^{m}_{j}(t)) - \varphi(t, z^{m}_{j}(t)) \big| \cdot  \big| g(\mu^{m}_{t}) f(t, z^{m}_{j} (t)) \big| \, \di t \nonumber
\\
&
\, +  \frac{1}{m} \sum_{j=1}^{m} \int_{0}^{T} \big| g(\mu^{m}_{t}) - g(\mu_{t}) \big| \cdot \big|  \varphi(t, z^{m}_{j} (t)) f(t, z^{m}_{j}(t)) \big| \, \di t \,.\nonumber
\end{align}
Relying on the $\Lambda$-Lipschitz continuity of~$g$, on the boundedness of~$f$, and on the uniform continuity of~$\varphi$, we can continue in~\eqref{e:limsup131} with
\begin{align}
\label{e:900}
\bigg|\int_{0}^{T}  \int_{ \R^{d}} & \varphi (t, y) \, \di \zeta^{m}(t, y) - \int_{0}^{T} \int_{ \R^{d}} \varphi(t, y) \, \di \alpha^{m}(t, y) \bigg|
\\
&
 \leq  C \sum_{j=1}^{m} \frac{1}{m} \int_{0}^{T} \omega_{\varphi} \big( | y^{m}_{j} (t) - z^{m}_{j} (t)| \big) \, \di t + C \| \varphi\|_{\infty} \int_{0}^{T} \cW_{1} (\mu^{m}_{t} , \mu_{t}) \, \di t \nonumber
 \\
 &
 \leq C  \int_{0}^{T} \omega_{\varphi} \bigg( \frac{1}{m} \sum_{j=1}^{m} | y^{m}_{j} (t) - z^{m}_{j} (t)| \bigg) \, \di t + C \| \varphi\|_{\infty} \int_{0}^{T} \cW_{1} (\mu^{m}_{t} , \mu_{t}) \, \di t\,, \nonumber
\end{align} 
where~$\omega_{\varphi}$ denotes a concave modulus of continuity of~$\varphi$. By~\eqref{e:810} and~\eqref{e:819}, we can pass to the limit as~$m \to \infty$ in~\eqref{e:limsup131} and deduce~\eqref{e:820}.

We are left to show that~\eqref{gamma_limsup} holds. In view of~\eqref{e:809} and~\eqref{e:810} and of the continuity of the Lagrangian cost, we have that
\begin{equation}
\label{e:901}
\lim_{m \to \infty} \int_{0}^{T} \mathcal{L} (\mu^{m}_{t}, \nu^{m}_{t}) \, \di t = \int_{0}^{T} \mathcal{L} (\mu_{t}, \nu_{t}) \, \di t
\end{equation}
As for the control part of the cost~$\mathfrak{E}$, denoting by~$\omega_{\phi}$ a modulus of continuity of~$\phi$ on~$K \times [-\Delta, \Delta]$ and recalling that $g$ is $\Lambda$-Lipschitz continuous, we estimate
\begin{align}
\label{e:902}
\frac{1}{m} \sum_{j=1}^{m} \int_{0}^{T} \!\!\!\phi(u^{m}_{j} (t), g (\mu^{m}_{t}) ) \, \di t & \leq \frac{1}{m} \sum_{j=1}^{m} \int_{0}^{T}\!\! \phi(u^{m}_{j} (t), g (\mu_{t})) \, \di t + \int_{0}^{T} \!\! \omega_{\phi} \big(\Lambda \cW_{1} (\mu^{m}_{t}, \mu_{t}) \big) \, \di t\,.
\end{align} 
By definition of the controls~$u^{m}_{j} (t)$ and of the measures~$\lambda^{m}_{t}$ and~$\eeta^{m}$, we may continue in~\eqref{e:902} with
\begin{align}
\label{e:903}
\frac{1}{m}  \sum_{j=1}^{m} \int_{0}^{T}  & \phi(u^{m}_{j} (t), g (\mu^{m}_{t}) ) \, \di t 
\\
&
\leq \frac{1}{m} \sum_{j=1}^{m} \int_{0}^{T} \phi( f(t, z^{m}_{j} (t)) , g (\mu_{t}) ) \, \di t +  \int_{0}^{T} \omega_{\phi} \big(\Lambda \cW_{1} (\mu^{m}_{t}, \mu_{t})  \big) \, \di t \nonumber
\\
&
= \int_{0}^{T} \int_{\R^{d}} \phi(f(t, y),  g(\mu_{t}))  \,\di \lambda^{m}_{t} (y)  \di t +  \int_{0}^{T} \omega_{\phi} \big( \Lambda \cW_{1} (\mu^{m}_{t}, \mu_{t})  \big)\, \di t \nonumber
\\
&
= \int_{\Gamma} \int_{0}^{T} \phi(f(t, \gamma), g(\mu_{t}) ) \, \di t \, \di \eeta^{m} (\gamma) +  \int_{0}^{T} \omega_{\phi} \big(\Lambda \cW_{1} (\mu^{m}_{t}, \mu_{t}) \big) \, \di t \nonumber
\\
&
= \int_{\Gamma} \F(\gamma) \, \di \eeta^{m} (\gamma) +  \int_{0}^{T} \omega_{\phi} \big(\Lambda \cW_{1} (\mu^{m}_{t}, \mu_{t}) \big) \, \di t \nonumber\,.
\end{align}
Thanks to~\eqref{e:805}, to~\eqref{e:809}, and to the choice $g^{m}=g$, we pass to the limsup in~\eqref{e:903} and infer that
\begin{equation}
\label{e:904}
\limsup_{m \to \infty} \, \frac{1}{m} \sum_{j=1}^{m} \int_{0}^{T} \phi(u^{m}_{j} (t), g^{m}(\mu^{m}_{t}) ) \, \di t  \leq \Phi(\zeta, \nu)\,.
\end{equation}
Combining~\eqref{e:901} and~\eqref{e:904} we infer~\eqref{gamma_limsup}. This concludes the proof of the theorem.
\end{proof}

As a consequence of Theorem~\ref{t:gamma-lim} we have the following results on the convergence of minima and minimizers of the control problems~\eqref{e:min-mean} and~\eqref{e:min-chaos}.

\begin{corollary}
\label{c:minima}
Let $q \in (1, +\infty]$, $\overline{\nu}_{0} \in \mathcal{P}_{q} (\R^{d})$, and~$\overline{\mu}_{0} \in \mathcal{P}_{2} (\R^{d})$ be such that $\overline{\mu}_{0} = \overline{\rho}_{0}\, \di x = {\rm Law} (\overline{X}_{0})  $ for some~$\overline{\rho}_{0} \in L^{1} (\R^{d})$ with finite entropy and some $\overline{X}_{0} \in L^{2} (\Om; \R^{d})$. For $m \in \mathbb{N}$, let $\overline{\y}^{m}_{0} \in (\R^{d})^{m}$ be such that~\eqref{e:4.23} is satisfied and~$\overline{\nu}^{m}_{0}\to \overline{\nu}_{0}$ narrow in~$\mathcal{P}(\R^{d})$. Then, for every sequence~$(X^{m}, \y^{m}, \uu^{m}, g^{m}) \in \mathcal{M}(\Om; C([0, T]; \R^{d})) \times C([0, T]; (\R^{d})^{m}) \times L^{1}([0, T]; K^{m}) \times \G$ of solutions to~\eqref{e:min-chaos}, there exists $(\mu, \nu, \zeta, g) \in \mathcal{S}(\overline{\mu}_{0}, \overline{\nu}_{0})$ solution to~\eqref{e:min-mean} such that, up to a subsequence, $\mu^{m} \to \mu$ and $\nu^{m} \to \nu$ in $C([0, T]; \mathcal{P}_{1} (\R^{d}))$, $\zeta^{m} \to \zeta$ weakly$^{*}$ in $\mathcal{M}_{b} ([0, T]\times \R^{d}; \R^{d})$,~$g^{m} \to g$ locally uniformly in~$C(\mathcal{P}_{1} (\R^{d}))$, and
\begin{displaymath}
E(\mu, \nu, \zeta, g) = \lim_{m \to \infty} \, \mathfrak{E} (X^{m}, \y^{m}, \uu^{m}, g^{m})\,.
\end{displaymath}
\end{corollary}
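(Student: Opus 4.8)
The plan is to derive Corollary~\ref{c:minima} from the $\Gamma$-convergence statement of Theorem~\ref{t:gamma-lim} by the classical argument relating $\Gamma$-limits to the convergence of minima and minimizers, using Theorem~\ref{p:existence-mean} and Proposition~\ref{p:cont-chaos} for the existence of the minimizers involved.

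First I would record that the minimum value $E^{*}$ of~\eqref{e:min-mean} is finite. By Remark~\ref{r:Snonvuoto} the set $\mathcal{S}(\overline{\mu}_{0}, \overline{\nu}_{0})$ is nonempty, and it contains an element with $\zeta = 0$ (for instance, a limit produced by the $\Gamma$-$\liminf$ part of Theorem~\ref{t:gamma-lim} applied to a sequence of finite-particle solutions, which exist by Proposition~\ref{p:cont-chaos}, with null controls); for such an element $\Phi(0, g) = 0$ by $(\phi3)$ and $\int_{0}^{T} \mathcal{L}(\mu_{t}, \nu_{t}) \, \di t < +\infty$, since $\mathcal{L}$ is continuous and $t \mapsto (\mu_{t}, \nu_{t})$ ranges in a compact subset of $(C([0, T]; \cP_1(\R^{d})))^{2}$, so $E$ is finite there. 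Then Theorem~\ref{p:existence-mean} provides a minimizer $(\tilde{\mu}, \tilde{\nu}, \tilde{\zeta}, \tilde{g})$ of~\eqref{e:min-mean}, with $E(\tilde{\mu}, \tilde{\nu}, \tilde{\zeta}, \tilde{g}) = E^{*} < +\infty$. Next I would use it to build competitors for~\eqref{e:min-chaos}: since $\overline{\nu}^{m}_{0} \to \overline{\nu}_{0}$ narrowly with $\sup_{m} m_{q}(\overline{\nu}^{m}_{0}) < +\infty$ and $q > 1$, the first moments of $\overline{\nu}^{m}_{0}$ are uniformly integrable, hence $\cW_{1}(\overline{\nu}^{m}_{0}, \overline{\nu}_{0}) \to 0$; this is exactly the hypothesis under which the $\Gamma$-$\limsup$ part of Theorem~\ref{t:gamma-lim}, applied to $(\tilde{\mu}, \tilde{\nu}, \tilde{\zeta}, \tilde{g})$ and to the given $\overline{\y}^{m}_{0}$, yields controls $(\hat{\uu}^{m}, \hat{g}^{m})$ with associated solutions $(\hat{\bX}^{m}, \hat{\y}^{m})$ to~\eqref{e:chaos} such that $\limsup_{m} \mathfrak{E}(\hat{\bX}^{m}, \hat{\y}^{m}, \hat{\uu}^{m}, \hat{g}^{m}) \le E^{*}$. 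Comparing with the optimal cost in~\eqref{e:min-chaos} (same initial data) gives $\mathfrak{E}(X^{m}, \y^{m}, \uu^{m}, g^{m}) \le \mathfrak{E}(\hat{\bX}^{m}, \hat{\y}^{m}, \hat{\uu}^{m}, \hat{g}^{m})$, so that $\limsup_{m} \mathfrak{E}(X^{m}, \y^{m}, \uu^{m}, g^{m}) \le E^{*} < +\infty$.

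Having this a priori bound, I would then extract compactness and the lower bound: along any subsequence the optimal costs have finite $\liminf$, so the $\Gamma$-$\liminf$ part of Theorem~\ref{t:gamma-lim} applies (its hypothesis~\eqref{e:4.23} being in force), producing, along a further subsequence, $\mu^{m} \to \mu$ and $\nu^{m} \to \nu$ in $C([0, T]; \cP_1(\R^{d}))$, $g^{m} \to g$ locally uniformly, $\zeta^{m} \to \zeta$ weakly$^{*}$ in $\mathcal{M}_{b}([0, T] \times \R^{d}; \R^{d})$, with $(\mu, \nu, \zeta, g) \in \mathcal{S}(\overline{\mu}_{0}, \overline{\nu}_{0})$ — the limit initial datum coming out of the theorem being a narrow limit of $\overline{\nu}^{m}_{0}$, hence equal to $\overline{\nu}_{0}$ by uniqueness — and $E(\mu, \nu, \zeta, g) \le \liminf_{m} \mathfrak{E}(X^{m}, \y^{m}, \uu^{m}, g^{m})$. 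Chaining the inequalities,
\[
E^{*} \le E(\mu, \nu, \zeta, g) \le \liminf_{m} \mathfrak{E}(X^{m}, \y^{m}, \uu^{m}, g^{m}) \le \limsup_{m} \mathfrak{E}(X^{m}, \y^{m}, \uu^{m}, g^{m}) \le E^{*},
\]
so all these quantities coincide: $(\mu, \nu, \zeta, g)$ solves~\eqref{e:min-mean} and $\mathfrak{E}(X^{m}, \y^{m}, \uu^{m}, g^{m}) \to E(\mu, \nu, \zeta, g) = E^{*}$ along that subsequence. Since the subsequence was arbitrary and the limit $E^{*}$ does not depend on it, $\mathfrak{E}(X^{m}, \y^{m}, \uu^{m}, g^{m}) \to E^{*}$ for the whole sequence, which yields the stated equality.

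I do not expect a serious obstacle here, since the substantial analysis — in particular the superposition-principle construction of a recovery sequence for~\eqref{e:min-mean} — has already been carried out in the proof of Theorem~\ref{t:gamma-lim}. The points that will require some care are: the passage from narrow convergence of $\overline{\nu}^{m}_{0}$ to convergence in $\cW_{1}$, which is needed to invoke the $\Gamma$-$\limsup$ inequality with the prescribed initial data; the identification of the limit initial datum produced by the $\Gamma$-$\liminf$ inequality with the given $\overline{\nu}_{0}$; and the preliminary bound $E^{*} < +\infty$, which is exactly what makes the compactness in the $\Gamma$-$\liminf$ step applicable.
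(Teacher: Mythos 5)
Your proposal is correct and is precisely the ``standard argument of $\Gamma$-convergence'' that the paper invokes without spelling out: existence of a limit minimizer (Theorem~\ref{p:existence-mean}) plus the $\Gamma$-$\limsup$ inequality give the a priori bound $\limsup_m \mathfrak{E}(X^m,\y^m,\uu^m,g^m)\le E^*$, the $\Gamma$-$\liminf$ part supplies compactness and the matching lower bound, and chaining the inequalities identifies the limit as a minimizer with convergence of the costs. The two points you flag as needing care (upgrading narrow convergence of $\overline{\nu}^m_0$ to $\cW_1$-convergence via the uniform $q$-moment bound, and the finiteness of $E^*$) are indeed the only non-mechanical steps, and you handle both correctly.
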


\begin{proof}
The thesis follows by standard arguments of $\Gamma$-convergence, invoking the compactness and convergence results of Theorem~\ref{t:gamma-lim}.
\end{proof}

\begin{corollary}\label{c:5.5}
Let $q \in (1, +\infty]$, $\overline{\nu}_{0} \in \mathcal{P}_{q} (\R^{d})$, and~$\overline{\mu}_{0} \in \mathcal{P}_{2} (\R^{d})$ be such that $\overline{\mu}_{0} = \overline{\rho}_{0}\, \di x = {\rm Law} (\overline{X}_{0})  $ for some~$\overline{\rho}_{0} \in L^{1} (\R^{d})$ with finite entropy and some $\overline{X}_{0} \in L^{2} (\Om; \R^{d})$. For $m \in \mathbb{N}$, let $\overline{\y}^{m}_{0} \in (\R^{d})^{m}$ be such that~\eqref{e:4.23} is satisfied and~$\overline{\nu}^{m}_{0}\to \overline{\nu}_{0}$ narrow in~$\mathcal{P}(\R^{d})$. Then
\begin{equation*}
\begin{split}
\min &\,  \big\{ E(\mu, \nu, \zeta, g): \, (\mu, \nu, \zeta, g) \in \mathcal{S} (\overline{\mu}_{0}, \overline{\nu}_{0}) \}\\
=&\, \lim_{m\to\infty} \min\bigg\{ \mathfrak{E}(X,  \y, \uu, g) : (\uu,g) \in L^{1}([0, T]; K^{m}) \times \mathcal{G}, \, (X, \y) \text{ solves~\eqref{e:chaos}}  \bigg\}\,.
\end{split}
\end{equation*}
\end{corollary}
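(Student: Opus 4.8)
The plan is to derive the identity from the fundamental theorem of $\Gamma$-convergence, using as a black box the two halves of Theorem~\ref{t:gamma-lim}. Write
\[
\mathfrak{m}_{m} \coloneqq \min\bigl\{ \mathfrak{E}(X,\y,\uu,g) : (\uu,g)\in L^{1}([0,T];K^{m})\times\G,\ (X,\y)\text{ solves~\eqref{e:chaos}}\bigr\}
\]
for the value of the finite-particle problem~\eqref{e:min-chaos} — attained thanks to Proposition~\ref{p:cont-chaos} — and $\mathfrak{m}_{\infty}$ for the value of~\eqref{e:min-mean}, attained by Theorem~\ref{p:existence-mean} and finite because $\mathcal{S}(\overline{\mu}_{0},\overline{\nu}_{0})\neq\emptyset$ by Remark~\ref{r:Snonvuoto}. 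The goal is $\mathfrak{m}_{m}\to\mathfrak{m}_{\infty}$, and I would prove the two inequalities $\limsup_{m}\mathfrak{m}_{m}\le\mathfrak{m}_{\infty}$ and $\liminf_{m}\mathfrak{m}_{m}\ge\mathfrak{m}_{\infty}$ separately.

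For the upper bound I would fix an optimal quadruple $(\mu,\nu,\zeta,g)\in\mathcal{S}(\overline{\mu}_{0},\overline{\nu}_{0})$ and first observe that the standing hypotheses — narrow convergence $\overline{\nu}^{m}_{0}\to\overline{\nu}_{0}$ together with the uniform moment bound~\eqref{e:4.23} — upgrade to $\cW_{1}(\overline{\nu}^{m}_{0},\overline{\nu}_{0})\to0$, which is precisely what the $\Gamma$-limsup part of Theorem~\ref{t:gamma-lim} requires of the initial data. That statement then yields controls $(\uu^{m},g^{m})$ and solutions $(X^{m},\y^{m})$ to~\eqref{e:chaos} with $\limsup_{m}\mathfrak{E}(X^{m},\y^{m},\uu^{m},g^{m})\le E(\mu,\nu,\zeta,g)=\mathfrak{m}_{\infty}$; minimality of $\mathfrak{m}_{m}$ gives the claimed bound.

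For the lower bound I would take, for each $m$, a minimizer $(X^{m},\y^{m},\uu^{m},g^{m})$ of~\eqref{e:min-chaos}, pass to a subsequence along which $\mathfrak{E}(X^{m},\y^{m},\uu^{m},g^{m})=\mathfrak{m}_{m}\to\liminf_{m}\mathfrak{m}_{m}$ (a finite number by the upper bound), and apply the $\Gamma$-liminf part of Theorem~\ref{t:gamma-lim}. Along a further subsequence this produces $\overline{\nu}_{0}'\in\mathcal{P}_{q}(\R^{d})$ and $(\mu,\nu,\zeta,g)\in\mathcal{S}(\overline{\mu}_{0},\overline{\nu}_{0}')$ with $\overline{\nu}^{m}_{0}\to\overline{\nu}_{0}'$ narrow and $E(\mu,\nu,\zeta,g)\le\liminf_{m}\mathfrak{E}(X^{m},\y^{m},\uu^{m},g^{m})$. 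By uniqueness of the narrow limit, $\overline{\nu}_{0}'=\overline{\nu}_{0}$, so $(\mu,\nu,\zeta,g)$ is admissible for~\eqref{e:min-mean} and $E(\mu,\nu,\zeta,g)\ge\mathfrak{m}_{\infty}$; chaining the inequalities gives $\liminf_{m}\mathfrak{m}_{m}\ge\mathfrak{m}_{\infty}$. Equivalently, and slightly faster, one can invoke Corollary~\ref{c:minima} directly on a sequence of minimizers of~\eqref{e:min-chaos} and then pass from the subsequential limit of $\mathfrak{m}_{m}$ to the full limit by the usual subsequence argument, since $\mathfrak{m}_{\infty}$ is a fixed value.

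I do not expect a genuine obstacle here: the whole analytic weight of the argument rests on Theorem~\ref{t:gamma-lim}, whose $\Gamma$-limsup half already contains the delicate superposition-principle construction. The only bookkeeping points that require a little care are the promotion of narrow convergence to $\cW_{1}$-convergence of the initial data (needed to match the hypotheses of the two halves of Theorem~\ref{t:gamma-lim}), the extraction of nested subsequences in the lower bound, and the identification of the limit initial datum $\overline{\nu}_{0}'$ with the prescribed $\overline{\nu}_{0}$ by uniqueness of the narrow limit; none of these is more than routine, and the argument is essentially the one underlying Corollary~\ref{c:minima}, specialized to the values of the functionals.
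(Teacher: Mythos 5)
Your proposal is correct and follows essentially the same route as the paper, which deduces the corollary directly from Corollary~\ref{c:minima} (itself the standard $\Gamma$-convergence argument built on the two halves of Theorem~\ref{t:gamma-lim}); you simply unfold that argument, and your bookkeeping points (upgrading narrow convergence plus~\eqref{e:4.23} to $\cW_{1}$-convergence of the initial data, and identifying the limit initial datum) are handled correctly.
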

\begin{proof}
The proof is an immediate consequence of Corollary~\ref{c:minima}.
\end{proof}

\bigskip
\noindent\textbf{Acknowledgments}
The work of SA was partially funded by the Austrian Science Fund through the projects ESP-61 and P-35359.
The work of MM was partially supported by the \emph{Starting grant per giovani ricercatori} of Politecnico di Torino, by the MIUR grant Dipartimenti di Eccellenza 2018-2022 (E11G18000350001), and by the PRIN 2020 project \emph{Mathematics for industry 4.0 (Math4I4)} (2020F3NCPX) financed by the Italian Ministry of University and Research.
The work of FS was partially supported by the project \emph{Variational methods for stationary and evolution problems with singularities and interfaces} PRIN 2017 (2017BTM7SN) financed by the Italian Ministry of Education, University, and Research and by the project Starplus 2020 Unina Linea~1 \emph{New challenges in the variational modeling of continuum mechanics} from the University of Naples ``Federico II'' and Compagnia di San Paolo (CUP: E65F20001630003).
MM and FS are members of the GNAMPA group of INdAM. This research fits within the scopes of the GNAMPA 2022 Project \emph{Approccio multiscala all’analisi di modelli di interazione}.
Finally, the authors acknowledge the warm hospitality of ESI, Vienna during the workshop \emph{Between Regularity and Defects: Variational and Geometrical Methods in Materials Science}, where part of this research was carried out.

\bibliographystyle{siam}
\bibliography{Stochastic_AMS.bib}

\end{document}